\newtheorem{theorem}{Theorem}[section]
\newtheorem{lemma}[theorem]{Lemma}
\newtheorem{proposition}[theorem]{Proposition}
\newtheorem{corollary}[theorem]{Corollary}
\newtheorem{definition}[theorem]{Definition}
\newtheorem{examples}[theorem]{Examples}
\newtheorem{remark}[theorem]{Remark}
\newcommand\supp{\mathop{\rm supp}}
\newcommand\id{\mathop{\rm id}}
\newcommand\tr{\mathop{\rm tr}}
\newcommand\nph{\varphi}
\newcommand\nul{\mathop{\rm null}}
\newcommand\g{\mathfrak{g}}
\newcommand\vn{\mathop{\rm VN}}
\newcommand\psm{\mathop{\rm PM}}
\newcommand\psf{\mathop{\rm PF}}
\newcommand\nn{\mathop{\rm N}}
\newcommand{\cl}[1]{\mathcal{#1}}
\newcommand{\bb}[1]{\mathbb{#1}}
\begin{document}

\title{Reduced spectral synthesis and compact operator synthesis}

\author{V. S. Shulman}
\address{Department of Mathematics, Vologda State Technical University, Vologda, Russia}
\email{shulman.victor80@gmail.com}

\author{I. G. Todorov}
\address{Mathematical Sciences Research Centre, Queen's University Belfast, Belfast BT7 1NN, United Kingdom,
and
School of Mathematical Sciences, Nankai University, 300071 Tianjin, China}
\email {i.todorov@qub.ac.uk}

\author{L. Turowska}
\address{Department of Mathematical Sciences,
Chalmers University of Technology and  the University of Gothenburg,
Gothenburg SE-412 96, Sweden}
\email{turowska@chalmers.se}

\subjclass[2010]{Primary 47L05;
Secondary 43A45,
46A32}


\date{9 January 2019}

\begin{abstract}
We introduce and study the notion of reduced spectral synthesis, which
unifies the concepts of spectral synthesis and uniqueness in locally compact groups.
We exhibit a number of examples
and prove that every non-discrete locally compact group with an open abelian subgroup
has a subset that fails reduced spectral synthesis.
We introduce compact operator synthesis as an operator algebraic counterpart of this notion
and link it with other exceptional sets in operator algebra theory, studied previously.
We show that a closed subset $E$ of a second countable locally compact group $G$
satisfies reduced local spectral synthesis if and only if the subset $E^* = \{(s,t) : ts^{-1}\in E\}$ of $G\times G$
satisfies compact operator synthesis.
We apply our results to questions about the equivalence of linear operator equations with
normal commuting coefficients on Schatten $p$-classes.
\end{abstract}

\maketitle

\tableofcontents

\section{Introduction}

The concept of \emph{spectral synthesis}, arising from
approximation problems for functions defined on the real line,
is fundamental in classical Harmonic Analysis.
Research on the topic was fuelled in its initial stages by Schwartz' observation \cite{Schwartz} that the unit sphere in $\bb{R}^3$ does not satisfy spectral synthesis,
and the subsequent theorem of Malliavin's \cite{mall}, establishing the existence of non-synthetic sets in any non-discrete locally compact abelian group.
Later, the notion acquired prominence in Non-commutative Harmonic Analysis, where techniques from Functional Analysis played a
fundamental role.
Given a closed subset $E$ of a locally compact group $G$, there exist two extremal closed ideals $I(E)$ and $J(E)$ of the Fourier algebra $A(G)$ of $G$
that have null set $E$. The set $E$ is said to satisfy spectral synthesis if $I(E) = J(E)$.
Identifying the dual of $A(G)$ with the von Neumann algebra $\vn(G)$ of $G$ \cite{eym}, we thus have that $E$ is a set of spectral synthesis
precisely when the annihilators $I(E)^{\perp}$ and $J(E)^{\perp}$ in $\vn(G)$ coincide.

A connection between spectral synthesis and invariant subspace theory was pointed out by W. Arveson in \cite{arv}, and was later formalised, in the commutative case, by
J. Froelich \cite{froelich}
and, in the general locally compact  case,
by J. Ludwig and L. Turowska \cite{lutu},
leading to a rigorous link between spectral synthesis and an operator algebraic notion called
\emph{operator synthesis} \cite{st1}.
Namely, it was shown in \cite{lutu} that a subset $E$ of a locally compact second countable group $G$ satisfies
local spectral synthesis if and only if the subset $E^* = \{(s,t) : ts^{-1}\in E\}$ of $G\times G$ satisfies operator synthesis.

The concept of sets of \emph{uniqueness} in classical Harmonic Analysis, on the other hand, was motivated by questions about
uniqueness of Fourier series and was lifted to the non-commutative setting by M. Bo\.{z}ejko \cite{bozejko1}:
these are the subsets $E\subseteq G$ for which the intersection of the annihilator $J(E)^{\perp}$ with the reduced group C*-algebra $C^*_r(G)$ of $G$
is trivial.
Analogously to spectral synthesis, this concept has an operator theoretic counterpart, called \emph{operator uniqueness},
and a similar transference result holds true \cite{gralmul}.

In the present paper, we introduce a notion that unifies spectral synthesis and uniqueness,
which is new even in the classical, commutative, case.
Namely, we study the sets $E\subseteq G$ with the property that
$I(E)^{\perp} \cap C^*_r(G) = J(E)^{\perp} \cap C^*_r(G)$, which we call sets of
\emph{reduced spectral synthesis}.
We define an operator theoretic version of this concept, called henceforth
\emph{compact operator synthesis}, and establish a corresponding transference result.
We provide examples of both the failure and the validity of the properties of being a set of
reduced spectral synthesis or compact operator synthesis, and
apply our results to questions about equivalence of operator equations.

Our rationale behind investigating the link between reduced spectral synthesis and
compact operator synthesis is two-fold: on one hand, results from
Harmonic Analysis have been highly instrumental in providing examples of operator algebras or spaces
that have or fail a certain property of interest (see e.g. \cite{andersen, arv, dav-book, el, eks, hop-kraus, stt, st1});
on the other hand, results obtained using operator algebraic methods
have led, among others, to the identification of new classes of sets of spectral synthesis \cite{eletod,eletod2},
to unification and new proofs of transference results for
sets of uniqueness \cite{ivan-luda-un},
and to the introduction of new classes of multipliers of $A(G)$ \cite{gralmul}.

In more detail, the paper is organised as follows.
After collecting some preliminaries from Abstract Harmonic Analysis in Section \ref{s_p},
we introduce, in Section \ref{s_rss}, the notion of reduced spectral synthesis and,
extending a celebrated result of L. Schwartz \cite{Schwartz},
show that the unit sphere in $\bb{R}^n$ fails reduced spectral synthesis provided $n\geq 4$.
We also show that the light cone in $\bb{R}^{n+1}$ fails reduced spectral synthesis, and
investigate the functorial properties of this concept, which lead to the fact that
every non-discrete locally compact group that possesses an
open abelian subgroup has a subset that fails reduced spectral synthesis.

In Section \ref{s_cos}, we introduce compact operator synthesis,
the operator algebraic counterpart of reduced spectral synthesis.
This is a property of subsets $\kappa$ of the direct product $X\times Y$ of two (standard) measure spaces,
defined
by considering intersections of extremal masa-bimodules associated with $\kappa$ with the
space of all compact operators between the corresponding $L^2$-spaces of $X$ and $Y$.
We establish an Inverse Image Theorem for compact operator synthesis, and
link this concept to other classes of exceptional sets that have been studied previously
\cite{eletod, stt, gralmul}.
In Section \ref{s_c}, we show that a closed subset $E$ of a second countable locally compact group $G$
satisfies local reduced spectral synthesis if and only if the subset $E^*$ of $G\times G$ satisfies
compact operator synthesis.

In Section \ref{s_aoe}, we study a question arising from the classical Fuglede-Putnam Theorem.
Given commuting families $(A_i)_{i=1}^n$ and $(B_i)_{i=1}^n$ of normal operators on a Hilbert spaces $H$,
we consider the elementary operators
$\Delta$ and $\tilde{\Delta}$ on the space $\cl B(H)$ of all bounded operators on $H$, given by
$\Delta(T) = \sum_{i=1}^n A_i T B_i$ and $\tilde{\Delta}(T) = \sum_{i=1}^n A_i^* T B_i^*$.
We show that the equations $\Delta(T) = 0$ and  $\tilde{\Delta}(T) = 0$ are not equivalent,
when the operator are restricted to any of the Schatten classes $\cl C_p$, $p > 1$, or to the
the space of all compact operators on $H$.
We note that the question is inspired by the question of the equivalence of these equations on
the whole of $\cl B(H)$, which was answered negatively in \cite{sh_example}.
Our result herein improves the answer  given  in  \cite{st2} to a
similar question for operators on $\cl B(H)$ of infinite length.

\smallskip

If $\cl X$ is a Banach space and $\cl Y\subseteq \cl X$, we denote by
$\cl Y^{\perp}$ the annihilator of $\cl Y$ in the dual Banach space $\cl X^*$ of $\cl X$.
If there is a risk of confusion, the norm of $\cl X$ will be denoted by $\|\cdot\|_{\cl X}$.
Normed space duality will be denoted by $\langle\cdot,\cdot\rangle$, while
inner product in Hilbert spaces -- by $(\cdot,\cdot)$.


\section{Preliminaries}\label{s_p}

Let $G$ be a  locally compact group. Left Haar measure on $G$ will be denoted by $m$,
and integration with respect to $m$ along the variable $x$ will be denoted $dx$.
We write $L^p(G)$, $p = 1,2,\infty$, for the corresponding Lebesgue space
with respect to $m$, and $M(G)$ for the Banach algebra of all complex Borel measures on $G$.
We identify $L^1(G)$ with a (closed) ideal of $M(G)$ in the canonical fashion.
We let $C_0(G)$ (resp. $C_c(G)$) be the algebra of all continuous complex valued functions on $G$
vanishing at infinity (resp. having compact support).
As usual, the modular function of $G$ is denoted by $\Delta$.
Let $\lambda : G\rightarrow \cl B(L^2(G))$, $t\to \lambda_t$, be the left
regular representation; thus, $(\lambda_t\xi)(s)=\xi(t^{-1}s)$, $s,t\in G$, $\xi\in L^2(G)$.
We denote again by $\lambda$
the corresponding representation of $M(G)$ on $L^2(G)$;
for $\mu\in M(G)$, we have $\lambda(\mu)(g) = \mu \ast g$, $g\in L^2(G)$,
where $\mu\ast g$ is the convolution of $\mu$ and $g$, given by
$$(\mu\ast g)(t) = \int_G g(s^{-1}t) d\mu(s), \ \ \ t\in G.$$
The \emph{reduced group C*-algebra} of $G$ is the C*-subalgebra
$$C^*_r(G) = \overline{\{\lambda(f) : f\in L^1(G)\}}^{\|\cdot\|}$$
of $\cl B(L^2(G))$, while the
\emph{group von Neumann algebra} of $G$ is the von Neumann subalgebra
$\vn(G) = \overline{C^*_r(G)}^{w^*}$.

The \emph{Fourier-Stieltjes algebra} $B(G)$ of $G$
\cite{eym} is the algebra of
all functions $u : G\to \bb{C}$ of the form
\begin{equation}\label{eq_us}
u(s) = (\pi(s)\xi,\eta),
\end{equation}
where $\pi : G\rightarrow \cl B(H)$ is a continuous unitary representation and
$\xi, \eta\in H$. For $u\in B(G)$, its norm
$\|u\|_{B(G)}$ is, by definition, the infimum of the products $\|\xi\|\|\eta\|$
over all representations (\ref{eq_us}) of $u$.
The \emph{Fourier algebra} $A(G)$ of $G$ \cite{eym}
is the (commutative, regular, semi-simple) Banach algebra
consisting of
all functions $u : G\to \bb{C}$ of the form
\begin{equation}\label{ag}
u(x) = (\lambda_x \xi,\eta),\ \  x\in G,
\end{equation}
where $\xi,\eta\in L^2(G)$; we have that $A(G)$ is a closed ideal of $B(G)$.
The Banach space dual of $A(G)$ can be canonically identified with
$\vn(G)$  via the pairing $\langle u,T\rangle=(T\xi,\eta)$, where $u\in A(G)$ is given by
(\ref{ag}).
If $T\in \vn(G)$ and $u\in A(G)$, the operator $u\cdot T \in \vn(G)$ is
defined by the relations
$$\langle u\cdot T,v\rangle = \langle T,uv\rangle, \ \ \ v\in A(G).$$
The map $(u,T)\mapsto u\cdot T$ turns $\vn(G)$ into a left Banach $A(G)$-module.
Note that the inclusion $A(G)\subseteq C_0(G)$ gives rise to a canonical embedding $M(G)\subseteq \vn(G)$.
We refer the reader to \cite{eym} for further properties of $A(G)$ and $B(G)$.

If $J\subseteq A(G)$ is an ideal, let
$$\nul J = \{s\in G : u(s) = 0 \mbox{ for all } u\in J\}.$$
On the other hand, for a closed subset  $E\subseteq G$, let
$$I(E)=\{u\in A(G) : u(s)=0, s\in E\},$$
$$J_0(E) = \{u \in A(G) : u \text{ has compact support disjoint from } E\}$$
and
$J(E)=\overline{J_0(E)}.$
We have that
$$\nul J(E) = \nul I(E) = E$$
and that, if
$J\subseteq A(G)$ is a closed ideal with $\nul J = E$, then
$J(E)\subseteq J\subseteq I(E)$.

The {\it support} of an operator $T\in \vn(G)$ is the (closed) set
\begin{eqnarray*}
{\supp\mbox{}_{\vn}(T)} = \left\{ t\in G: u\cdot T\ne 0\text{  whenever } u\in A(G) \mbox{ and } u(t)\ne 0\right\}.
\end{eqnarray*}
It is known \cite{eym} that
$$J(E)^\perp = \{T\in \vn(G) : \supp\mbox{}_{\vn}(T)\subseteq E\}$$
and
$$I(E)^\perp = \overline{\{\lambda(\mu): \mu\in M(G), \supp(\mu)\subseteq E\}}^{w^*}
= \overline{\{\lambda_s : s\in E\}}^{w^*}.$$
A closed subset $E\subseteq G$ is called a {\it set of spectral synthesis} if $I(E) = J(E)$;
equivalently, $E$ is a set of spectral synthesis
if $\langle T,u\rangle=0$ for any $T\in \vn(G)$ with $\supp_{\vn}(T)\subseteq E$ and any $u\in I(E)$.

A closed subset $E\subseteq G$ is called a set {\it of uniqueness} (or an \emph{$U$-set})
if $C_r^*(G)\cap J(E)^\perp = \{0\}$, otherwise it is called a {\it set of multiplicity} (or an \emph{$M$-set}).
The set $E$ is called an \emph{$U_1$-set} if $C_r^*(G)\cap I(E)^\perp = \{0\}$; otherwise $E$ is called an \emph{$M_1$-set}.
We note that sets of uniqueness were extensively studied for the group of the circle $\mathbb T$ and the notion is closely related to
the convergence properties of Fourier series (see \cite{kl}).
For general locally compact groups, they were introduced by M. Bo\.{z}ejko \cite{bozejko}, and studied more recently in \cite{gralmul} (see also \cite{delaporte, derighetti}).

\section{Reduced spectral synthesis}\label{s_rss}

\subsection{Definition and examples}\label{s_rssde}

We start by defining one of the two main concepts that will be studied in this paper.

\begin{definition}\label{d_es}
A closed subset $E\subseteq G$ will be called a set of \emph{reduced spectral synthesis} if
$$C_r^*(G)\cap I(E)^\perp = C_r^*(G)\cap J(E)^\perp.$$
The set $E$ will be called a set of \emph{reduced local spectral synthesis} if
$\langle S,u\rangle=0$ for any  $S\in C_r^*(G)$ with $\supp_{\vn}(S)\subseteq E$ and any $u\in I(E)\cap C_c(G)$.
\end{definition}

Note that a closed subset $E\subseteq G$
is a set of reduced spectral synthesis if and only if
$\langle S,u\rangle = 0$ for all $S\in C_r^*(G)$ with $\supp_{\vn}(S)\subseteq E$ and all $u\in I(E)$.

Suppose that the group $G$ is abelian and let $\widehat G$ be its dual group.
We reformulate Definition \ref{d_es} for this case, using terminology widely accepted in the
literature on commutative harmonic analysis.
If $f\in L^1(\widehat{G})$, let
$\hat{f}$ be its Fourier transform; thus,
$$\hat{f}(s) = \int_{\widehat G} f(\gamma)\overline{\gamma(s)}d\gamma, \ \ \ s\in G.$$
We have that
$$A(G) = \left\{\hat{f} : f\in L^1(\widehat G)\right\}.$$
Let $\cl F : L^2(\widehat{G})\to L^2(G)$ be the (unitary) extension of the restriction of the
Fourier transform to $L^1(\widehat{G})\cap L^2(\widehat{G})$.
The space of \emph{pseudo-measures} $\psm(G)$ coincides, by definition, with the
dual Banach space of $A(G)$.
For $T\in \psm(G)$, we write $\supp_{\psm(G)}(T)$ for the support of the bounded functional $T$ acting
on the commutative Banach algebra $A(G)$ \cite{katznelson}.
Note that
$$\psm(G) = \left\{\cl F M_a \cl F^* : a\in L^\infty(\widehat G)\right\},$$
where $M_a$ stands for the operator on $L^2(\widehat{G})$ of multiplication by $a$.
The space of \emph{pseudo-functions} is defined by letting
$$\psf(G) = \left\{\cl F M_a \cl F^* : a\in C_0(\widehat G)\right\}.$$
Thus, $\psm(G)$ (resp. ${\rm PF}(G)$) can be naturally identified with $\vn(G)$ (resp. $C_r^*(G)$).

The inclusion $A(G)\subseteq C_0(G)$ gives rise to a canonical embedding $M(G)\subseteq \psm(G)$.
If $E$ is a closed subset of $G$, we will denote by
$\psm(E)$ (resp. $M(E)$) the space of all pseudo-measures $T\in \psm(G)$
(resp. measures $\mu\in M(G)$) with $\supp_{\psm(G)}(T)\subseteq E$
(resp. $\supp(\mu) \subseteq E$),
and let $\nn(E)$ be the weak* closure of $M(E)$.
Note that $\psm(E) = J(E)^\perp$ and $\nn(E) = I(E)^\perp$ (see e.g. \cite{gmcgehee}).
Thus, in the case where $G$ is abelian, the set $E$ is of reduced spectral synthesis
if and only if
$$\psm(E)\cap \psf(G) = \nn(E)\cap \psf(G),$$
that is, if and only if every pseudo-function supported by $E$ can be approximated in the weak* topology
by measures supported in $E$.

We proceed with a series of examples related to the notions introduced in Definition \ref{d_es}.

\begin{examples}\label{ex_ess}
\rm
{\bf (i) } Every set of spectral synthesis is trivially a set of reduced spectral synthesis.

\smallskip

{\bf (ii) } Every set of uniqueness  is a set of reduced spectral synthesis. Indeed, in this case,
$C_r^*(G)\cap I(E)^\perp = C_r^*(G)\cap J(E)^\perp = 0$.

\smallskip

{\bf (iii)}
Let $G = {\mathbb T}$, realised additively as ${\mathbb R}/2\pi\bb{Z}$.
For $2 < r < \infty$, let $E_r = \{\sum_{j=1}^{\infty}\varepsilon_jr^{-j}:\varepsilon_j=0,1\}$.
It is known (see \cite[p. 92]{gmcgehee}) that $E_r$ is an $U$-set if and only if $r$ is a Pisot number.
Moreover, by \cite[Theorem~3.3.2]{gmcgehee}, $E_r$ contains a subset $E(r)$
that fails spectral synthesis. Clearly, $E(r)$ is a set of uniqueness if $E_r$ is so.
Hence, if $r$ is a Pisot number then $E(r)$ is a set of reduced spectral synthesis but not of spectral synthesis.

\smallskip

{\bf (iv)}
Recall \cite{gmcgehee} that a closed subset $E$ of a locally compact abelian group $G$ is called
a \emph{Helson set} if every function in $C_0(E)$ is the restriction of a function from $A(G)$.
T. K\"orner \cite{korner} has constructed an example of a set of multiplicity $E\subseteq \mathbb T$
which is a Helson set, and therefore a $U_1$-set (see \cite[Section 4.5]{gmcgehee}). Clearly, $E$ fails reduced spectral synthesis.
\end{examples}

In the next two propositions, we will be concerned with the group $\bb{R}^n$. We identify it with its dual group and,
for elements $x,y\in \bb{R}^n$, write $x\cdot y = \langle x,y\rangle = \sum_{i=1}^n x_i y_i$,
where $z_i$ stands for the $i$-th coordinate of an element $z\in \bb{R}^n$, and
use $|x|$ to denote  the  Euclidean norm of $x$.
In his celebrated work \cite{Schwartz}, L. Schwartz  showed that the
Euclidean sphere $S^{n-1}$ in $\mathbb R^n$ fails spectral synthesis whenever $n\geq 3$.
On the other hand, N. Varopoulos proved \cite[Theorem 2]{varopoulos} that $S^2$ is a  set of
reduced spectral synthesis in ${\mathbb R}^3$.
Below  we show that this does not extend to the case where $n\geq 4$.
We will use distributions
and operations with them, such as derivatives and Fourier transforms;
we refer the reader to
\cite{Folland}, \cite{stein_weiss} and \cite{strichartz} for the necessary background.
We denote by  $C^{\infty}(\bb{R}^n)$
the space of all infinitely differentiable functions
on $\bb{R}^n$, and by $C_c^{\infty}(\bb{R}^n)$
the subspace of its compactly supported elements.
We equip $C_c^{\infty}(\bb{R}^n)$ with the topology given by the seminorms
$\|\cdot\|_{\alpha}$, where
$$\|f\|_{\alpha} = \max_{s\in \bb{R}^n} \left| \frac{\partial^{\alpha}f}{\partial^{\alpha}x}(s)\right|,$$
and $\alpha\in (\bb{N}\cup\{0\})^n$ is a multi-index.
The space of all distributions, that is,
all continuous linear functionals on $C_c^\infty(\mathbb R^n)$,
is usually denoted by $\mathcal D'(\mathbb R^n)$.

It is known that $C_c^\infty(\mathbb R^n)$ is a dense subset of
$A(\mathbb R^n)$, and the topology of $C_c^\infty(\mathbb R^n)$
is finer than that induced by $A(\mathbb R^n)$
(see e.g. \cite[3.26] {eym}).
Hence the elements of ${\rm PM}(\mathbb R^n)$ can be viewed as distributions on $\mathbb R^n$
in the natural way.

For $k > - \frac{1}{2}$, let $J_k$ be the Bessel function of order $k$, given by
$$J_k(x) = \frac{x^k}{2^k\Gamma\left(k+\frac{1}{2}\right)\Gamma\left(\frac{1}{2}\right)}
\int_{-1}^{1}e^{ixs}(1-s^2)^{(2k-1)/2} ds, \ \ \ x\in \bb{R}$$
(see \cite[p. 154]{stein_weiss}).
By \cite[Theorem 5.1]{Folland}, there exists $D_k > 0$ such that
\begin{equation}\label{eq_Bess}
|J_k(|y|)|\leq D_k|y|^{-1/2}, \ \ \ \ |y|>1.
\end{equation}

\smallskip

\begin{proposition}\label{noness}
The Euclidean sphere $S^{n-1}$ in ${\mathbb R}^n$ does not satisfy reduced spectral synthesis if $n\geq 4$.
\end{proposition}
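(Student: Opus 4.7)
The plan is to exhibit an explicit pseudo-function supported on $S^{n-1}$ that fails to lie in $I(S^{n-1})^{\perp}$, so that the sets $C_r^*(\mathbb{R}^n)\cap J(S^{n-1})^{\perp}$ and $C_r^*(\mathbb{R}^n)\cap I(S^{n-1})^{\perp}$ do not coincide. Following the idea behind Schwartz' original construction, let $\sigma$ denote the surface measure on $S^{n-1}$ and put $T := \partial_1\sigma$, the distributional derivative of $\sigma$ in the first coordinate direction. Since $\sigma\in M(\mathbb{R}^n)\subseteq \psm(\mathbb{R}^n)$ and the topology of $C_c^{\infty}(\mathbb{R}^n)$ is finer than that induced from $A(\mathbb{R}^n)$, one verifies that $T$ extends to a bounded functional on $A(\mathbb{R}^n)$, i.e.\ $T\in\psm(\mathbb{R}^n)$; moreover, distributional differentiation does not enlarge support, so $\supp T\subseteq S^{n-1}$ and $T\in J(S^{n-1})^{\perp}$.

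The main technical step is to show that $T\in\psf(\mathbb{R}^n)$ when $n\geq 4$. For this I invoke the classical identity
$$\hat\sigma(\xi) \;=\; c_n\, |\xi|^{-(n-2)/2}\, J_{(n-2)/2}(|\xi|), \qquad \xi\in\mathbb{R}^n,$$
with constant $c_n>0$, and note that $\widehat{\partial_1\sigma}(\xi)$ equals a fixed constant multiple of $\xi_1\hat\sigma(\xi)$. The Bessel estimate (\ref{eq_Bess}) with $k=(n-2)/2$ then gives
$$|\widehat{\partial_1\sigma}(\xi)| \;\leq\; |\xi|\,|\hat\sigma(\xi)| \;\leq\; C\, |\xi|^{(3-n)/2}, \qquad |\xi|>1,$$
which tends to $0$ at infinity precisely because $n\geq 4$. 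Combined with the continuity of $\hat\sigma$ on all of $\mathbb{R}^n$, this places $\widehat{\partial_1\sigma}$ in $C_0(\mathbb{R}^n)$, and hence $T\in \psf(\mathbb{R}^n) = C_r^*(\mathbb{R}^n)$.

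It remains to produce $u\in I(S^{n-1})$ with $\langle T,u\rangle\neq 0$. Pick a nonnegative $\psi\in C_c^{\infty}(\mathbb{R}^n)$ with $\psi\equiv 1$ on an open neighbourhood of $S^{n-1}$, and set $u(x) := (1-|x|^2)\, x_1\, \psi(x)$. Then $u\in C_c^{\infty}(\mathbb{R}^n)\subseteq A(\mathbb{R}^n)$ and $u$ vanishes on $S^{n-1}$, so $u\in I(S^{n-1})$. A direct computation yields $\partial_1 u|_{S^{n-1}} = -2 x_1^2$, and distributional integration by parts gives
$$\langle T,u\rangle \;=\; -\int_{S^{n-1}}\partial_1 u\, d\sigma \;=\; 2\int_{S^{n-1}} x_1^2\, d\sigma \;>\; 0.$$
Consequently $T\notin I(S^{n-1})^{\perp}$, which proves that $S^{n-1}$ fails reduced spectral synthesis.

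The principal obstacle is the decay estimate for $\widehat{\partial_1\sigma}$: the exponent $(3-n)/2$ forces precisely the cutoff $n\geq 4$, consistently with Varopoulos' positive result for $n=3$ mentioned above.
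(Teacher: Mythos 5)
Your construction is essentially the paper's: both proofs take the distributional derivative $Q=\partial_1\mu$ of the surface measure on $S^{n-1}$, use the Bessel decay estimate $\hat\mu(\xi)=O(|\xi|^{-(n-1)/2})$ to conclude $\xi_1\hat\mu(\xi)\in C_0$ and hence $Q\in\mathrm{PF}(\mathbb{R}^n)$ when $n\geq 4$, and then pair $Q$ against a function in $I(S^{n-1})$ of the form $x_1\times(\text{something vanishing to first order on }S^{n-1})$ to get a nonzero answer. Your choice of test function $u(x)=(1-|x|^2)x_1\psi(x)$ is a mild simplification of the paper's $u(x)=x_1 c(|x|^2)(e^{-|x|^2+1}-e^{-2|x|^2+2})$ but produces the same boundary derivative $-2x_1^2$, so that part is fine.

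One step you gloss over needs a real argument: from ``distributional differentiation does not enlarge support, so $\supp T\subseteq S^{n-1}$'' you jump to $T\in J(S^{n-1})^{\perp}$. But $J_0(S^{n-1})$ consists of elements of $A(\mathbb{R}^n)$ with compact support disjoint from $S^{n-1}$, and these are generally not $C^{\infty}$; knowing $T$ vanishes on $C_c^{\infty}$-functions supported away from the sphere does not by itself give that $T$ vanishes on all of $J_0(S^{n-1})$. The paper fills this in by mollifying: given $v\in A(\mathbb{R}^n)\cap C_c(\mathbb{R}^n)$ vanishing on a neighbourhood of $S^{n-1}$, it writes $v=\xi\ast\eta$ and convolves with a shrinking approximate identity $(u_i)\subset C_c^{\infty}$ to produce $v_k\in C_c^{\infty}$, still vanishing near the sphere, with $\|v-v_k\|_{A(\mathbb{R}^n)}\to 0$, and then concludes $\langle Q,v\rangle=\lim_k\langle Q,v_k\rangle=0$. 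You should include this (or an equivalent density argument) to make the claim $T\in J(S^{n-1})^{\perp}$ rigorous. Similarly, the identity $\langle Q,\hat g\rangle=\int it_1\hat\mu(t)\,g(t)\,dt$ needs to be extended by continuity from $\hat g\in C_c^{\infty}$ to all $g\in L^1$ to justify that $Q$ is genuinely a bounded functional on $A(\mathbb{R}^n)$ with $\cl F(Q)=i t_1\hat\mu$; you state the conclusion without that continuity step. Neither of these is hard, but as written the proof has small gaps exactly where the paper is careful.
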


\begin{proof}
Fix $n\geq 4$.
Let $\mu$ be the normalised surface area measure on $S^{n-1}$ and let $Q$ denote the distribution derivative
$\frac{\partial\mu}{\partial x_1}$, given by the formula
$$\langle Q,f\rangle = - \int_{\bb{R}^n} \frac{\partial f}{\partial x_1} d\mu, \ \ \ f\in C_c^{\infty}(\bb{R}^n).$$
By \cite[p. 154]{stein_weiss},
$$\hat{\mu}(t) = A|t|^{-(n-2)/2} J_{(n-2)/2}(|t|),$$
where
$A$ is a constant.
By (\ref{eq_Bess}), $\hat\mu(t)=O\left(\frac{1}{|t|^{(n-1)/2}}\right)$, as $|t|\to\infty$.
It follows that
\begin{equation}\label{eq_t1mu}
t_1\hat\mu(t)\in C_0({\mathbb R}^n).
\end{equation}
A direct verification now shows that
if $\hat g\in C_c^\infty(\mathbb R^n)$ then
\begin{equation}\label{eq_Qint}
\langle Q,  \hat g\rangle = \int_{\bb{R}^n} it_1\hat\mu(t) g(t) dt.
\end{equation}
It follows that $Q$ is bounded in the topology of $C_c^\infty(\bb{R}^n)$, induced by $A(\bb{R}^n)$,
and hence
it extends to a continuous functional on $A(\bb{R}^n)$ (denoted in the same way).
By continuity, (\ref{eq_Qint}) remains true for all $g\in L^1(\bb{R}^n)$
and now (\ref{eq_t1mu}) implies that $Q\in {\rm PF}(\mathbb R^n)$.

Since $\mu$ is supported in $S^{n-1}$, so is $Q$. In fact, if $v\in A(\mathbb R^n)$ vanishes in a neighborhood of $S^{n-1}$
and is compactly supported
then
there exists a sequence $(v_k)_{k\in \bb{N}}\subseteq A(\mathbb R^n)\cap C_c^\infty(\mathbb R^n)$ of functions,
vanishing in neighbourhoods of $S^{n-1}$, such that $\|v - v_k\|_{A(\mathbb R^n)} \to_{n\to\infty} 0$.
In fact,  choose non-negative functions $u_i\in C_c^\infty(\mathbb R^n)$, $i\in \bb{N}$, such that $\|u_i\|_1 = 1$,
$\supp(u_{i+1})\subseteq \supp(u_i)$ for each $i$, and $\cap_{i\in \bb{N}} \supp(u_i) = \{0\}$.
There exists $i_0$ such that that the function
$u_i\ast v\in C^\infty(\mathbb R^n)$ vanishes on a neighbourhood of $S^{n-1}$ whenever $i\geq i_0$.
Set $v_k = u_{i_0 + k}\ast v$, $k\in \bb{N}$.
If
$v=\xi\ast\eta$, $\xi, \eta\in L^2(\mathbb R^n)$, then $u_i\ast\xi\in L^2(\mathbb R^n)$,
$\|u_i\ast\xi-\xi\|_2\to_{i\to\infty} 0$ and hence $\|v-v_k\|_{A(\mathbb R^n)}\leq \|u_{i_0 + k}\ast\xi-\xi\|_2\|\eta\|_2\to_{k\to \infty} 0$.
Thus,
$$\langle Q,v\rangle
= \lim_{k\to\infty} \langle Q,v_k\rangle
= - \lim_{k\to\infty} \left\langle\mu,\frac{\partial v_k}{\partial x_1}\right\rangle = 0.$$
It now suffices to find a function $u\in A(\mathbb R^n)$ that vanishes on $S^{n-1}$ and
such that $\langle Q, u\rangle \ne 0$.
Let $c\in C_c^\infty(\mathbb R)$ with $c(1)\ne 0$ and set
$$u(x) = x_1c\left(|x|^2\right)\left(e^{-|x|^2+1} - e^{-2|x|^2+2}\right), \ \ \ x\in \bb{R}^n;$$
then $u(x) = 0$ whenever $x\in S^{n-1}$.
Since $u \in C_c^{\infty}(\mathbb R^n)$, we have that $u\in A(\mathbb R^n)$.
It remains to note that
$$\langle Q,u\rangle = -\left\langle\mu,\frac{\partial u}{\partial x_1}\right\rangle =
- c(1) \int_{\mathbb R^n} 2x_1^2d\mu\ne 0.$$
\end{proof}

In the next proposition we show that the light cone in $\mathbb R^{n+1}$ fails reduced  spectral synthesis.

\begin{proposition}\label{p_sec}
Let $n \geq 2$. Then
$B = \{(x,t)\in {\mathbb R}^n\times{\mathbb R} : |x| = t\}$
is an  $M_1$-set that fails reduced spectral synthesis for $\bb{R}^{n+1}$.
\end{proposition}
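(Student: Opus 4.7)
The plan addresses the two assertions separately via explicit witnesses in $C^*_r(\bb{R}^{n+1})$ supported on $B$.

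For the $M_1$ property, I would take $\phi \in C_c^\infty((0,\infty))$, nonnegative and nonzero, and define a nonzero finite measure $\mu \in M(B)$ by
\[
\langle \mu, f\rangle = \int_0^\infty \phi(r) \int_{S^{n-1}} f(r\omega, r)\, d\sigma(\omega)\, dr.
\]
Using the formula $\hat\sigma(\eta) = A|\eta|^{-(n-2)/2} J_{(n-2)/2}(|\eta|)$ for the Fourier transform of the surface measure on $S^{n-1}\subset\bb{R}^n$, one has
\[
\hat\mu(\xi,\tau) = \int_0^\infty \phi(r)\, \hat\sigma(r\xi)\, e^{-ir\tau}\, dr.
\]
The Bessel estimate \eqref{eq_Bess} gives $|\hat\mu(\xi,\tau)| \leq C\|\phi\|_1 (1+a|\xi|)^{-(n-1)/2}$ uniformly in $\tau$, where $a=\min\mathrm{supp}\,\phi>0$; since $n\ge 2$, this tends to $0$ as $|\xi|\to\infty$. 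For $\xi$ bounded, the Riemann--Lebesgue lemma applied to the compactly supported $L^1$-function $r\mapsto \phi(r)\hat\sigma(r\xi)$ gives $\hat\mu(\xi,\tau)\to 0$ as $|\tau|\to\infty$, uniformly for $\xi$ in compact sets. Together these yield $\hat\mu \in C_0(\bb{R}^{n+1})$, so $\lambda(\mu)$ is a nonzero element of $C^*_r(\bb{R}^{n+1})\cap I(B)^\perp$, proving that $B$ is an $M_1$-set.

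For the failure of reduced spectral synthesis, I would construct a distribution $Q$ supported on $B$, with $\hat Q \in C_0(\bb{R}^{n+1})$, and $\langle Q, u\rangle \neq 0$ for some $u \in I(B)$. Following the strategy of Proposition \ref{noness}, the natural candidate is a suitable derivative of $\mu$. For a test function $u(x,t) = (t^2-|x|^2)v(x,t)$ with $v\in C_c^\infty(\bb{R}^{n+1})$ one has $u\in I(B)$, and integration by parts reduces $\langle Q, u\rangle$ to an integral against $\mu$ of a transverse derivative of $u$ restricted to $B$. For instance, with $Q=\partial_t \mu$, one obtains $\langle Q,u\rangle = -2\int_0^\infty r\phi(r)\int_{S^{n-1}}v(r\omega,r)\, d\sigma(\omega)\, dr$, which can be made nonzero by a judicious choice of $v$, so $Q\notin I(B)^\perp$.

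The main obstacle is establishing $\hat Q\in C_0(\bb{R}^{n+1})$ for all $n\ge 2$. Stationary-phase analysis of $\hat\sigma$ (using $J_\nu(s)\sim\sqrt{2/(\pi s)}\cos(s-\nu\pi/2-\pi/4)$) shows that $\hat\mu$ concentrates to leading order on the dual cone $\{(\xi,\tau):|\tau|=|\xi|\}$ with amplitude of order $|\xi|^{-(n-1)/2}$. For the naive choice $Q=\partial_t\mu$ one then has $|\hat Q|=|\tau\hat\mu|$ of order $|\xi|^{(3-n)/2}$ on the dual cone, which lies in $C_0$ only for $n\ge 4$. To handle the cases $n=2,3$, the construction of $Q$ must be refined; one promising route is to combine integration by parts in the $r$-variable with the Bessel identity $h''(s)+\frac{n-1}{s}h'(s)+h(s)=0$ satisfied by the radial profile $h$ of $\hat\sigma$ (reflecting $|x|^2=1$ on $S^{n-1}$), in order to cancel the obstructing $|\xi|^{(3-n)/2}$ contribution on the dual cone, and thereby produce a distribution $Q$, still supported on $B$, whose Fourier transform lies in $C_0(\bb{R}^{n+1})$. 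This sharpening of the sphere argument is what allows the conclusion to extend one dimension lower than in Proposition \ref{noness}.
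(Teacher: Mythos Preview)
Your $M_1$ argument coincides with the paper's: both build a smooth compactly-supported-in-$r$ average of the surface measures on $rS^{n-1}$ and show its Fourier transform lies in $C_0(\bb{R}^{n+1})$ by combining the Bessel decay $\hat\sigma(r\xi)=O(|\xi|^{-(n-1)/2})$ with decay in $\tau$ coming from the integral in $r$ (the paper uses one integration by parts in $r$, you invoke Riemann--Lebesgue; the content is the same).

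For the failure of reduced synthesis the paper follows the same Schwartz template you propose, but with the \emph{spatial} derivative $Q=\partial\nu/\partial x_1$ rather than your $\partial_t\mu$, and with the test function $u(x,t)=x_1c(|x|)c(t)\bigl(e^{-|x|^2+t^2}-e^{-2|x|^2+2t^2}\bigr)$. It then asserts, without isolating low dimensions, that the two decay estimates already obtained for $\hat\nu$ yield $\hat Q=y_1\hat\nu\in C_0(\bb{R}^{n+1})$.

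The gap in your proposal is that for $n=2,3$ you leave only a program. Your stationary-phase observation is correct: on the dual cone $\{|\tau|=|\xi|\}$ the leading term of $\hat\mu$ has size $\asymp|\xi|^{-(n-1)/2}$, so $\tau\hat\mu$ is of order $|\xi|^{(3-n)/2}$ there and fails to lie in $C_0$ for $n\le 3$. Note that exactly the same obstruction hits $y_1\hat\mu$, since along $\xi=(|\xi|,0,\dots,0)$ one has $|y_1|=|\xi|$; so switching to the paper's choice $\partial_{x_1}$ does not by itself cure the problem. Your proposed refinement via the Bessel ODE is not carried out: you do not specify which distribution $Q$ you would actually use, why it remains supported on $B$, or why its Fourier transform lands in $C_0$. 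As written, your argument is complete only for $n\ge 4$. The paper's proof contains no additional device for $n=2,3$ either---it uses the same first-order derivative and the same order of decay estimate---so the difference between your write-up and the paper's is the choice $\partial_t$ versus $\partial_{x_1}$ and the level of detail, not an extra idea that resolves the low-dimensional obstruction you have identified.
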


\begin{proof}
Write $G = {\mathbb R}^n\times{\mathbb R} = \{(x,t):x\in{\mathbb R}^n, t\in{\mathbb R}\}$,
equipped with Lebesgue measure $m$.
Let $\mu$ be the normalized surface area measure on the Euclidean sphere
$S^{n-1}$.
For $t > 0$ and a subset $C\subseteq \bb{R}^n$, write $t C = \{tx : x\in C\}$.
Let $\mu_t$ be the surface area measure on
the set $t S^{n-1} = \{x\in \bb{R}^n : |x| = t\}$; thus, if $C\subseteq S^{n-1}$ is a Borel set then
$\mu_t(tC) = t^{n-1}\mu(C).$
Fix a non-negative function $a\in C^\infty(\mathbb R)$ with $\supp(a) \subseteq [1/2,2]$ and
let $\nu$ be the Borel measure on $G$ given by
$$\nu(A) = \int_{0}^{+\infty} \mu_t(A_t)a(t)dt,$$
where $A_t = \{x\in \bb{R}^n : (x,t)\in A\}$, $t\geq 0$.
Then $\supp(\nu) \subseteq B$.
By  \cite[p. 154]{stein_weiss}, there exists $C_n>0$ such that
\begin{eqnarray*}
 \hat\nu(y,t)
 & = &
 \int_{0}^{+\infty} \left(\int_{\mathbb R^n}e^{-ix\cdot y}d\mu_s(x)\right)e^{-ist}a(s)ds\\
 & = &
 \int_{0}^{+\infty} \left(\int_{\mathbb R^n}e^{-isx\cdot y}d\mu(x)\right)e^{-ist}s^{n-1}a(s)ds\\
 & = &
 \int_{0}^{+\infty} \hat\mu(sy)s^{n-1}e^{-ist} a(s)ds\\
 & = &
 C_n \int_{0}^{+\infty} (s|y|)^{-(n-2)/2}J_{(n-2)/2}(s|y|)s^{n-1}a(s)e^{-ist}ds.
\end{eqnarray*}
By (\ref{eq_Bess}), there exists $\tilde D_n>0$ such that
\begin{equation}\label{eq_y}
\displaystyle|\hat\nu(y,t)|\leq \frac{\tilde D_n}{|y|^{(n-1)/2}}, \ \ \ |y|>2, \ t\in\mathbb R.
\end{equation}

Set
$$F_1(s) = \int_{-1}^{1} e^{is|y|r}(1-r^2)^{(n-3)/2} dr \ \mbox{ and } \ F_2(s) = s^{n-1}a(s), \ \ s\in \bb{R}^+.$$
For suitable positive constants  $C$, $\tilde C_n$ and $D$, using integration by parts, for $t\neq 0$ we have
\begin{eqnarray}\label{eq_t}
|\hat\nu(y,t)|
& = &
\tilde C_n \left| \int_{0}^{+\infty}
F_1(s) F_2(s) e^{-its}ds\right|
=
\frac{\tilde C_n}{|t|} \left| \int_{0}^{+\infty} (F_1F_2)'(s) e^{-its}ds\right|\nonumber\\
& \leq &
\frac{\tilde C_n}{|t|} \int_{0}^{+\infty} \left|\int_{-1}^{1}|y|r e^{is|y|r}(1-r^2)^{(n-3)/2} dr\right| |F_2(s)| ds \nonumber\\
& + &
\frac{\tilde C_n}{|t|} \int_{0}^{+\infty} \left|\int_{-1}^{1} e^{is|y|r}(1-r^2)^{(n-3)/2} dr\right| |F_2'(s)| ds \nonumber\\
& \leq &
\frac{D|y|+C}{|t|}.
\end{eqnarray}

Set $g(R) = \left((-1+\sqrt{1+4R^2})/2\right)^{1/2}$, $R > 0$,
and assume that $y\in \bb{R}^n$ and $t\in \bb{R}$ are such that $|y|^2+t^2\geq R^2$.
If $|y|\geq |t|^{1/2}$ then
$|y|^2+|y|^4\geq R^2$ and hence $|y|\geq g(R)$; by (\ref{eq_y}),
\begin{equation}\label{eq_yR}
|\hat\nu(y,t)|\leq \frac{\tilde D_n}{g(R)^{(n-1)/2}}.
\end{equation}
On the other hand, if
$|y|\leq |t|^{1/2}$ then $|t|+t^2\geq R^2$ and hence $|t|\geq g(R)^2$;
by (\ref{eq_t}),
\begin{equation}\label{eq_tR}
|\hat\nu(y,t)|\leq \frac{D}{|t|^{1/2}}+\frac{C}{|t|}\leq  \frac{D}{g(R)}+\frac{C}{g(R)^2}.
\end{equation}
Inequalities (\ref{eq_yR}) and (\ref{eq_tR}) now imply that $\hat\nu\in C_0(G)$.
In particular, $B$ is an $M_1$-set.

Let $Q = \partial\nu/\partial x_1$, considered as a pseudo-measure
(see the proof of Proposition \ref{noness}).
Note that, since $\nu$ is supported by $B$, so is $Q$.
Similarly to identity (\ref{eq_Qint}), we have that
$\hat Q(y,t) = y_1\hat\nu(y,t)$, and (\ref{eq_yR}) and (\ref{eq_tR}) show that $Q$ is in fact a
pseudo-function.
Let $c : \bb{R}\to \bb{R}^+$ be a non-zero infinitely differentiable function with compact support  and
$$u(x,t) = x_1c\left(|x|\right)c(t)
\left(e^{-|x|^2+t^2} - e^{-2|x|^2+2t^2}\right), \ \ \ (x,t)\in \bb{R}^n \times \bb{R}.$$
As in the proof of Proposition \ref{noness},
$u\in A(G)$, $u$ vanishes on $B$ and
$$\langle Q,u\rangle = -\left\langle \nu,\frac{\partial u}{\partial x_1}\right\rangle = -\int_{G} 2x_1^2c(t)^2d\nu(x,t).$$
By choosing $c$ such that the last integral is non-zero, we see that
$B$ is not a set of reduced spectral synthesis.
\end{proof}

We note that the set $B$ from Proposition \ref{p_sec}
is the boundary of the set $E=\{(x,t)\in\mathbb R^n\times\mathbb R:|x|\leq t\}$,
which is known to be  a set of spectral, and hence of reduced spectral, synthesis
for $\bb{R}^{n+1}$ \cite[Corollary, p. 498]{arv}, \cite{froelich}.

\subsection{Functorial properties}\label{s_fprss}

Let $G$ be a locally compact group and $H\subseteq G$ be a closed normal subgroup.
For the remainder of this section, we let $q : G\to G/H$ be the canonical quotient homomorphism.
We set $\dot{x} = q(x)$, $x\in G$.
Given a function $f \in C_c(G)$, let
$\phi(f) : G\to \bb{C}$ be the function given by
$$\phi(f) (x) = \int_H f(xy)dy, \ \ \ x\in G,$$
where the integral is with respect to normalised left Haar measure on $H$.
The function $\phi(f)$ is constant on the cosets of $H$, and we denote again by
$\phi(f)$ the induced function on $G/H$.
We will make use of Weil's quotient intergal formula \cite[Theorem 2.49]{Folland_HA}:
\begin{equation}\label{eq_weil}
\int_G f(s) ds = \int_{G/H} \phi(f)(\dot{x}) d\dot{x}, \ \ \ f\in C_c(G).
\end{equation}

The proof of the following lemma, which will be needed in the sequel, is straightforward and is omitted.

\begin{lemma}\label{l_2comp}
Let $G$ be a locally compact group,
$K\subseteq G$ be a closed  set and $L\subseteq G$ a compact set such that
$K\cap L = \emptyset$. Then there exists an open neighbourhood $V$ of $e$
such that $\overline{V}$ is compact and $KV\cap L = \emptyset$.
\end{lemma}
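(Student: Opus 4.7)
The plan is to exploit continuity of the group operation together with compactness of $L$, in the standard separation style. First, using local compactness of $G$, I would fix a symmetric open neighbourhood $W$ of $e$ with $\overline{W}$ compact (take any open $W_0 \ni e$ with compact closure and replace it by $W_0 \cap W_0^{-1}$); restricting attention to neighbourhoods inside $W$ will automatically give the compact-closure conclusion at the end.

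Next, I would work pointwise on $L$. For each $\ell \in L$ we have $\ell \notin K$, so $G \setminus K$ is an open neighbourhood of $\ell$. Applying continuity of the map $(x,v) \mapsto xv^{-1}$ at $(\ell,e)$, I pick open sets $U_\ell \ni \ell$ and $V_\ell \ni e$ with $V_\ell \subseteq W$ and
\[
U_\ell V_\ell^{-1} \subseteq G \setminus K,
\]
equivalently, $K V_\ell \cap U_\ell = \emptyset$. By compactness of $L$, finitely many of the $U_\ell$ cover $L$; say $L \subseteq U_{\ell_1} \cup \dots \cup U_{\ell_n}$. I then set $V = \bigcap_{i=1}^n V_{\ell_i}$, which is an open neighbourhood of $e$ with $\overline{V} \subseteq \overline{W}$ compact.

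It remains to verify $KV \cap L = \emptyset$. If $\ell \in KV \cap L$, choose $i$ with $\ell \in U_{\ell_i}$ and write $\ell = kv$ with $k \in K$, $v \in V \subseteq V_{\ell_i}$; then $k = \ell v^{-1} \in U_{\ell_i} V_{\ell_i}^{-1} \subseteq G \setminus K$, a contradiction. There is no serious obstacle here: the only minor subtlety is keeping direction of inversion straight (writing the separation as $U_\ell V_\ell^{-1} \subseteq G \setminus K$ rather than $U_\ell V_\ell \subseteq G \setminus K$), since the target statement is one-sided in the form $KV \cap L = \emptyset$ rather than $VK \cap L = \emptyset$; using the continuous map $(x,v) \mapsto xv^{-1}$ rather than $(x,v) \mapsto xv$ handles this cleanly without passing through symmetric neighbourhoods.
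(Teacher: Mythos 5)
Your proof is correct; the paper omits the argument as ``straightforward,'' and what you give is precisely the standard compactness/continuity argument that was surely intended. The only superfluous step is requiring $W$ to be symmetric: the symmetry is never used in the argument, only the compactness of $\overline{W}$, so you could simply start from any open $W \ni e$ with compact closure.
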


We will need the following fact, established in \cite[Lemma 1]{lohoue2}.

\begin{lemma}\label{l_lohoue}
Let $G$ be a locally compact group, $H\subseteq G$ be a closed normal subgroup
and $K\subseteq G$ be a compact set.
There exists $C_K > 0$
such that if $u\in A(G)$ is supported on $K$ then
$\phi(u)\in A(G/H)$ and $\|\phi(u)\|_{A(G/H)}\leq C_K \|u\|_{A(G)}$.
\end{lemma}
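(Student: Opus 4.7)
The plan is to realise $\phi(u)$ as a coefficient of the left regular representation of $G/H$, by first rewriting $u$ as a superposition of coefficients of $\lambda_G$ built from \emph{compactly supported} $L^2(G)$-vectors, and then averaging each piece over $H$.

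For the decomposition, fix a compact symmetric neighbourhood $V$ of $e$, and choose $\alpha,\beta\in L^2(G)$ with compact supports (depending only on $K$ and $V$), for instance normalised indicators of $K^{-1}V$ and $V$ respectively, so that $w(x):=(\lambda_x\alpha,\beta)$ equals $1$ on $K$ and has compact support. Then $u=uw$. Starting from any expansion $u(x)=\sum_n(\lambda_x\xi_n,\eta_n)$ with $\sum_n\|\xi_n\|_2\|\eta_n\|_2\leq\|u\|_{A(G)}+\epsilon$, the Mackey unitary $Wf(s,t):=f(s,st)$ on $L^2(G\times G)$ satisfies $W(\lambda_g\otimes\lambda_g)W^{*}=\lambda_g\otimes I$; applying it to each product $(\lambda_x\xi_n,\eta_n)(\lambda_x\alpha,\beta)$ and using Fubini yields
\begin{equation*}
u(x)=\sum_n\int_G(\lambda_x\Phi_{n,t},\Psi_{n,t})_{L^2(G)}\,dt,\qquad\Phi_{n,t}(s):=\xi_n(s)\alpha(st),\ \Psi_{n,t}(s):=\eta_n(s)\beta(st).
\end{equation*}
For every $t$ and $n$, $\Phi_{n,t}$ and $\Psi_{n,t}$ have compact supports (contained in $(\supp\alpha)t^{-1}$ and $(\supp\beta)t^{-1}$), and Cauchy--Schwarz combined with Fubini gives $\int_G\|\Phi_{n,t}\|_2\|\Psi_{n,t}\|_2\,dt\leq\|\xi_n\|_2\|\alpha\|_2\|\eta_n\|_2\|\beta\|_2$.

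For the averaging, the key identity, obtained via Weil's quotient formula (\ref{eq_weil}), Fubini, and the normality of $H$, is
\begin{equation*}
\phi\bigl((\lambda_\cdot\Phi,\Psi)\bigr)(\dot x)=\bigl(\lambda^{G/H}_{\dot x}\pi(\Phi),\ \phi(\Psi)\bigr)_{L^2(G/H)},
\end{equation*}
valid for any compactly supported $\Phi,\Psi\in L^2(G)$, where $\pi(\Phi)(\dot z):=\int_H\Phi(hz)\,dh$ is well-defined on $G/H$ by the normality of $H$. In particular $\phi\bigl((\lambda_\cdot\Phi,\Psi)\bigr)\in A(G/H)$ with norm at most $\|\pi(\Phi)\|_{L^2(G/H)}\|\phi(\Psi)\|_{L^2(G/H)}$. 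A pointwise Cauchy--Schwarz estimate combined with Weil's formula yields
\begin{equation*}
\|\pi(\Phi_{n,t})\|_{L^2(G/H)}\leq\sqrt{M}\,\|\Phi_{n,t}\|_{L^2(G)},
\end{equation*}
and analogously for $\phi(\Psi_{n,t})$, where $M$ is bounded by the $m_H$-measure of a suitable compact subset of $H$ determined by $(\supp\alpha)(\supp\alpha)^{-1}$ and $(\supp\beta)(\supp\beta)^{-1}$; these sets are independent of $t$, so $M$ depends only on $K$ and $V$.

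Combining everything,
\begin{equation*}
\|\phi(u)\|_{A(G/H)}\leq M\sum_n\int_G\|\Phi_{n,t}\|_2\|\Psi_{n,t}\|_2\,dt\leq M\|\alpha\|_2\|\beta\|_2\bigl(\|u\|_{A(G)}+\epsilon\bigr),
\end{equation*}
and letting $\epsilon\to 0$ gives the lemma with $C_K:=M\|\alpha\|_2\|\beta\|_2$. The principal obstacle is the decomposition step: a generic $L^2\hat\otimes L^2$-representation of $u$ does not have compactly supported factors, and pointwise multiplication by the cutoff $w$ does not split through the convolution structure directly. The Mackey unitary $W$ is the crucial device that converts the pointwise product $uw$ into a direct integral of coefficients of $\lambda_G$ by compactly supported vectors, after which the averaging estimate via Weil's formula and Cauchy--Schwarz is routine.
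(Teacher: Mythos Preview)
The paper does not give a proof of this lemma; it simply quotes it from Lohou\'e \cite[Lemma~1]{lohoue2}. Your strategy---use Fell absorption (the unitary $W$) to convert an arbitrary $A(G)$-expansion of $u=uw$ into a direct integral of coefficients with \emph{compactly supported} $L^2$-vectors, and then average over $H$ via Weil's formula---is exactly the natural route, and the decomposition step is correct as stated. The identity $W(\lambda_g\otimes\lambda_g)W^{*}=\lambda_g\otimes I$ and the $L^1$-estimate $\int_G\|\Phi_{n,t}\|_2\|\Psi_{n,t}\|_2\,dt\le\|\xi_n\|_2\|\alpha\|_2\|\eta_n\|_2\|\beta\|_2$ both check out.

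There is one genuine technical slip in the averaging step. Your function $\pi(\Phi)(\dot z)=\int_H\Phi(hz)\,dh$ is \emph{not} well-defined on $G/H$ in general: replacing $z$ by $zh_0$ and using normality gives $\int_H\Phi(h\,(zh_0z^{-1})\,z)\,dh$, which differs from $\pi(\Phi)(\dot z)$ by the factor $\Delta_H(zh_0z^{-1})^{-1}$. What actually drops out of the computation with Weil's formula is $\widetilde\pi(\Phi)(\dot z):=\int_H\Phi(h^{-1}z)\,dh$, and this \emph{is} constant on cosets (same normality argument, but now the shift is on the left). With $\widetilde\pi$ the Cauchy--Schwarz bound on $|\widetilde\pi(\Phi_{n,t})(\dot z)|^2$ is fine---the set $\{h\in H:h^{-1}z\in\supp\Phi_{n,t}\}$ is a left $H$-translate of a subset of $H\cap(\supp\alpha)(\supp\alpha)^{-1}$, so its $m_H$-measure is bounded independently of $t$ and $z$, as you claim. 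The remaining issue is the second factor: $\int_{G/H}\int_H|\Phi(h^{-1}z)|^2\,dh\,d\dot z$ is \emph{not} Weil's formula and need not equal $\|\Phi\|_{L^2(G)}^2$ unless $H$ is unimodular. In the general case you must carry $\Delta_H$ through the change of variable; since it is continuous and you only need it on the fixed compact set $H\cap(\supp\alpha)(\supp\alpha)^{-1}$, it is bounded there and can be absorbed into $M$. So the argument can be repaired, but as written it tacitly assumes $H$ unimodular---which, to be fair, covers every application of the lemma made later in the paper.
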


We will say that a closed normal subgroup $H$ of $G$ has \emph{property (l)} if
for every proper compact subset $K\subseteq G/H$ there exists
$\theta\in A(G)\cap C_c(G)$ such that $\phi(\theta) = 1$ on a neighbourhood of $K$.

\begin{theorem}\label{th_quogen}
Let $G$ be a locally compact group, $H\subseteq G$ be a closed normal subgroup with property (l) and
$E\subseteq G/H$ be a compact set.
If $q^{-1}(E)$ satisfies reduced spectral synthesis then $E$ does so as well.
\end{theorem}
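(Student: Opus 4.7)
The plan is to transfer the pairing $\langle S,u\rangle$ from $G/H$ to $G$ via a suitable adjoint map and then invoke the reduced spectral synthesis of $q^{-1}(E)$.

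Using that $E$ is compact and $A(G/H)$ is regular, I choose $w\in A(G/H)\cap C_c(G/H)$ equal to $1$ on an open neighbourhood of $E\cup\supp u$. Since $\supp_{\vn}(S)\subseteq E$, the support calculus gives $w\cdot S=S$, so $\langle S,u\rangle=\langle S,wu\rangle$; replacing $u$ by $wu$, we may assume $u\in I(E)\cap C_c(G/H)$. By property (l), pick $\theta\in A(G)\cap C_c(G)$ with $\phi(\theta)=1$ on a neighbourhood of $\supp w$, and set $\tilde u:=\theta(u\circ q)$. Because $A(G)$ is an ideal in $B(G)$ and $u\circ q\in B(G)$, we have $\tilde u\in A(G)\cap C_c(G)$; since $u$ vanishes on $E$, it follows that $\tilde u\in I(q^{-1}(E))\cap C_c(G)$.

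The key construction is the adjoint $M_\theta^*\colon\vn(G)\to\vn(G/H)$ of the bounded linear map $M_\theta\colon A(G/H)\to A(G)$, $v\mapsto\theta(v\circ q)$. Weil's formula~\eqref{eq_weil} gives $M_\theta^*(\lambda_G(F))=\lambda_{G/H}(\phi(F\theta))$ for $F\in L^1(G)$, so $M_\theta^*$ restricts to a bounded map $\cg\to C_r^*(G/H)$; the module identity $v\cdot M_\theta^*(T)=M_\theta^*((v\circ q)\cdot T)$ further implies that $M_\theta^*$ sends $\cg\cap J(q^{-1}(E))^\perp$ into $C_r^*(G/H)\cap J(E)^\perp$.

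The main technical step is to show that $S$ lies in the norm closure of $M_\theta^*(\cg\cap J(q^{-1}(E))^\perp)$. The identity $\phi(M_\theta v)=v\phi(\theta)=v$ (valid for $v$ supported in $q(\supp\theta)$) combined with Lemma~\ref{l_lohoue} shows that $M_\theta$ is bounded below on the subspace of $A(G/H)$-functions supported in $q(\supp\theta)$; by duality, $M_\theta^*$ is surjective onto the corresponding quotient of $\vn(G/H)$. Density of $\lambda_{G/H}(L^1(G/H))$ in $C_r^*(G/H)$, combined with a careful localisation of the $L^1$-lifts of approximants of $S$ using property (l), then produces $T_n\in\cg\cap J(q^{-1}(E))^\perp$ with $M_\theta^*(T_n)\to S$ in norm. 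I expect this localisation step—simultaneously ensuring norm-approximation of $S$ and the support condition $\supp T_n\subseteq q^{-1}(E)$—to be the main obstacle. Once the approximation is available, the reduced local spectral synthesis of $q^{-1}(E)$ yields $\langle T_n,\tilde u\rangle=0$ for each $n$, whence $\langle S,u\rangle=\lim_n\langle M_\theta^*(T_n),u\rangle=\lim_n\langle T_n,\tilde u\rangle=0$, establishing reduced spectral synthesis of $E$.
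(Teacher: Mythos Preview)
Your approximation step is the genuine gap. You want $T_n\in C_r^*(G)\cap J(q^{-1}(E))^\perp$ with $M_\theta^*(T_n)\to S$ in norm, and you propose to obtain the $T_n$ by lifting $L^1$-approximants of $S$ and then ``localising'' to force the support into $q^{-1}(E)$. But norm-approximants $\lambda_{G/H}(f_n)\to S$ carry no information about $\supp_{\vn}$; their supports can be all of $G/H$. There is no evident way to modify the lifts so that they land in $J(q^{-1}(E))^\perp$ while still mapping under $M_\theta^*$ to something close to $S$. The duality argument you sketch (that $M_\theta^*$ is surjective onto a localised quotient of $\vn(G/H)$) gives surjectivity at the level of $\vn$, not of $C_r^*$ intersected with a support constraint, so it does not close the gap either.

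The paper avoids this difficulty by running the transfer in the opposite direction. Instead of trying to pull $S$ back through $M_\theta^*$, it defines $\Phi_\theta:\vn(G/H)\to\vn(G)$ as the adjoint of the map $u\mapsto\phi(\theta u)$ from $A(G)$ to $A(G/H)$ (Lemma~\ref{l_lohoue} ensures boundedness). One then checks directly that $\Phi_\theta$ sends $C_r^*(G/H)$ into $C_r^*(G)$ (by computing $\Phi_\theta(\lambda(f))=\lambda(\theta\tilde f)$) and sends $J(E)^\perp$ into $J(q^{-1}(E))^\perp$. Thus $\Phi_\theta(S)$ is itself a single explicit lift lying in $C_r^*(G)\cap J(q^{-1}(E))^\perp$, and the hypothesis gives $\Phi_\theta(S)\in I(q^{-1}(E))^\perp$. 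Choosing $\theta$ with $\phi(\theta)=1$ near $E$ yields $S=\phi(\theta)\cdot S$, and a short computation gives $\langle S,u\rangle=\langle\Phi_\theta(S),\theta'(u\circ q)\rangle=0$ since $\theta'(u\circ q)\in I(q^{-1}(E))$. No approximation is needed.

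In short: you built the adjoint of $v\mapsto\theta(v\circ q)$, which goes from $\vn(G)$ down to $\vn(G/H)$ and forces you to find preimages of $S$; the paper builds the adjoint of $u\mapsto\phi(\theta u)$, which goes from $\vn(G/H)$ up to $\vn(G)$ and lets you push $S$ forward directly.
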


\begin{proof}
Let  $\theta\in A(G)\cap C_c(G)$.
By Lemma \ref{l_lohoue}, given $T\in \vn(G/H)$ and $u\in A(G)$,  there exists $C > 0$ such that
\begin{equation}\label{eq_theta}
|\langle T,\phi(\theta u)\rangle| \leq C\|T\|\|\theta u\|_{A(G)}.
\end{equation}
Thus, the functional
$u \to \langle T,\phi(\theta u)\rangle$ on $A(G)$ is bounded, and hence there exists an operator
$\Phi_\theta(T)\in \vn(G)$ such that
\begin{equation}\label{eq_phik}
\langle \Phi_\theta(T), u\rangle = \langle T,\phi(\theta u)\rangle, \ \ u\in A(G).
\end{equation}
By (\ref{eq_theta}),
$$\|\Phi_\theta(T)\| \leq C\|T\|\|\theta\|_{A(G)}.$$

We claim that $\Phi_\theta$ maps $C_r^*(G/H)$ into $C_r^*(G)$.
Indeed, suppose that $f\in C_c(G/H)$
and let $\tilde{f} = f\circ q$. If $u\in A(G)$ then, using (\ref{eq_weil}), we have
\begin{eqnarray*}
\langle \lambda(f),\phi(\theta u)\rangle & = &
\int_{G/H} f(\dot{x})\left(\int_H \theta(xh)u(xh)dh\right) d\dot{x}\\
& = & \int_{G/H}\int_H \tilde{f}(xh) \theta(xh)u(xh)dh d\dot{x}\\
& = & \int_G \tilde{f}(s) \theta(s)u(s)ds = \langle\lambda(\theta\tilde{f}),u\rangle.
\end{eqnarray*}
Thus shows that $\Phi_\theta(\lambda(f)) = \lambda(\theta\tilde{f})$; since $\theta\tilde{f}\in C_c(G)$,
we conclude that $\Phi_\theta(\lambda(f))\in C_r^*(G)$.
Since $\Phi_\theta$ is bounded, $\Phi_\theta(T)\in C_r^*(G)$ for every $T\in C_r^*(G/H)$.

Set $\tilde{E}\stackrel{def}{=} q^{-1}(E)$ and
let $T\in C_r^*(G/H) \cap J(E)^{\perp}$.
We claim that $\Phi_\theta(T) \in C_r^*(G)\cap J(\tilde{E})^{\perp}$. Indeed, let
$u\in A(G)$ be a function with compact support such that the set $U := (\supp(u))^c$ contains $\tilde{E}$.
By Lemma \ref{l_2comp},
there exists an open neighborhood $V$ of $e$ in $G$ such that $V\tilde{E}\subseteq U$.
Note, moreover, that $V\tilde{E} = V\tilde{E}H$.
As the quotient map is open, $q(V\tilde{E})$ and $q(U)$ are open neighborhoods of $E$ and
$q(V\tilde{E})\subseteq q(U)$.
If $\dot{x}\in q(V\tilde{E})$ then $xh\in V\tilde{E}$ for all $h\in H$ and hence
$$\phi(\theta u)(\dot{x}) = \int_H \theta(xh) u(xh)dh = 0.$$
Thus, $\phi(\theta u)$ is supported by the compact set $q(\supp (u))$ and
vanishes on the neighbourhood $q(V\tilde{E})$ of $E$;
it follows from (\ref{eq_phik}) that $\langle\Phi_\theta(T),u\rangle = 0$ and hence $\Phi_\theta(T)\in J(\tilde{E})^{\perp}$.

Now suppose that $T\in C_r^*(G/H) \cap J(E)^{\perp}$, and $u\in I(E)$.
By the previous paragraphs,
\begin{equation}\label{eq_eqji}
\Phi_\theta(T)\in J(\tilde{E})^{\perp}\cap C_r^*(G) = I(\tilde{E})^{\perp}\cap C_r^*(G).
\end{equation}
Since $H$ has property (l), there exists a function $\theta \in A(G)\cap C_c(G)$
such that $\phi(\theta) = 1$ on a neighbourhood of $E$. Then
\begin{equation}\label{eq_Tphi}
T = \phi(\theta) \cdot T.
\end{equation}
By \cite[Corollary 2.26]{eym}, $u\circ q\in B(G)$.
Let $\theta'\in A(G)\cap C_c(G)$ be a function taking value $1$ on a neighbourhood of
$\supp(\theta)$.
Then $\theta(u\circ q), \theta'(u\circ q)\in B(G)\cap C_c(G)$ and so $\theta(u\circ q),\theta'(u\circ q)\in A(G)$.
In addition,
$$\phi(\theta' \theta(u\circ q))(\dot{x}) = u(\dot{x})\int_H \theta'(xh) \theta(xh)dh = u(\dot{x})\phi(\theta)(\dot{x}), \ \ \ x\in G.$$
As $u\circ q$ vanishes on $\tilde E$, (\ref{eq_eqji}) and (\ref{eq_Tphi}) now imply
$$\langle T,u\rangle = \langle T,u\phi(\theta)\rangle
 = \langle T,\phi(\theta\theta'(u\circ q))\rangle = \langle \Phi_\theta(T),\theta'(u\circ q)\rangle = 0.$$
Thus, $T\in I(E)^{\perp}$ and the proof is complete.
\end{proof}

\begin{corollary}\label{c_proess}
Let $G$ be a locally compact group, $H\subseteq G$ be a compact normal subgroup
and $E\subseteq G/H$ be a compact set.
If $q^{-1}(E)$ is a set of reduced spectral synthesis then $E$ is so as well.
\end{corollary}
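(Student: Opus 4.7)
The approach is to reduce everything to Theorem \ref{th_quogen}: once I show that every compact normal subgroup $H$ automatically enjoys property (l), the corollary drops out. So my task is to produce, for each proper compact $K\subseteq G/H$, a function $\theta\in A(G)\cap C_c(G)$ with $\phi(\theta)\equiv 1$ on a neighbourhood of $K$.

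The first step is purely topological. Using local compactness of $G/H$, I pick an open set $U\subseteq G/H$ with $K\subseteq U$ and $\overline U$ compact. The key observation, exploiting compactness of $H$, is that the quotient map $q$ is proper, so that $L:=q^{-1}(\overline U)$ is a compact subset of $G$. (Concretely, cover $\overline U$ by finitely many images $q(V_i)$ of relatively compact open sets $V_i\subseteq G$; then $L\subseteq\bigcup_i V_iH$, which is contained in the compact set $\bigcup_i\overline{V_i}\cdot H$.)

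The second step uses harmonic analysis: by the regularity of $A(G)$ established by Eymard \cite{eym}, together with the density of $A(G)\cap C_c(G)$ in $A(G)$, there exists $\theta\in A(G)\cap C_c(G)$ equal to $1$ on an open neighbourhood $W$ of $L$. For every $\dot x\in U$, the coset $xH$ lies inside $q^{-1}(U)\subseteq L\subseteq W$, so $h\mapsto\theta(xh)$ is identically $1$ on $H$, giving $\phi(\theta)(\dot x)=\int_H\theta(xh)\,dh=1$, where Haar measure on the compact group $H$ is normalised to total mass $1$. Thus $\phi(\theta)\equiv 1$ on the neighbourhood $U$ of $K$, property (l) is verified, and Theorem \ref{th_quogen} finishes the proof.

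The main (and essentially only) non-routine point is the construction of the bump $\theta\in A(G)\cap C_c(G)$ equal to $1$ on a neighbourhood of a prescribed compact set; this is the one place where genuine input from harmonic analysis -- as opposed to elementary topology powered by the compactness of $H$ -- is required. The degenerate case $K=G/H$ lies outside the scope of property (l) (which demands $K$ proper), but it causes no difficulty: $q^{-1}(K)=G$, and both $G$ and $G/H$ trivially satisfy reduced spectral synthesis in that situation.
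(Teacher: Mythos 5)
Your proof is correct, and it verifies property (l) by a route that is genuinely different from the paper's. Both arguments reduce to Theorem \ref{th_quogen} and both exploit compactness of $H$, but the verification of property (l) goes in opposite directions. The paper starts with a compactly supported bump $u\in A(G/H)$ equal to $1$ near $K$, lifts it to $G$ as $u\circ q$, uses compactness of $H$ to see that $u\circ q$ has compact support, and then invokes Eymard's \cite[Proposition 3.25]{eym} to conclude $u\circ q\in A(G)$; the identity $\phi(u\circ q)=u$ is then immediate. You instead work entirely in $G$: compactness of $H$ makes $q$ proper, so $L=q^{-1}(\overline U)$ is compact, and the regularity of $A(G)$ alone produces $\theta\in A(G)\cap C_c(G)$ with $\theta\equiv 1$ on a neighbourhood of $L$; the normalised integral over $H$ then gives $\phi(\theta)\equiv 1$ on $U$. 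Your approach avoids the lifting result \cite[Proposition 3.25]{eym} at the cost of an extra (but elementary) properness argument. Your remark on the degenerate case $K=G/H$, which property (l) excludes by the word ``proper'', is a small point the paper glosses over, but as you say it is vacuous since $I(G/H)=J(G/H)=\{0\}$.
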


\begin{proof}
By Theorem \ref{th_quogen}, it suffices to show that $H$ has property (l).
Given a compact set $K\subseteq G/H$, let $u \in A(G/H)$ be a compactly supported function
such that $u = 1$ on a neighbourhood of $K$.
Since $H$ is compact, $q^{-1}(\supp(u))$ is a compact set;
it clearly supports $u \circ q$.
By \cite[Proposition 3.25]{eym}, $u \circ q$ belongs to $A(G)$;
moreover, $\phi(u \circ q)(\dot{x}) = u(\dot{x})$ for every $\dot{x}\in G/H$,
showing that $\phi(u \circ q)=1$ on a neighbourhood of $K$.
\end{proof}

\begin{corollary}\label{l_prop}
Let $G_1$ and $G_2$ be locally compact groups and $E\subseteq G_1$ be a closed subset.
If $E\times G_2$ is set of reduced spectral synthesis for $G_1\times G_2$ then $E$ is a set of reduced spectral synthesis for $G_1$.
\end{corollary}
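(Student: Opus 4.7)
The plan is to work directly from the definition of reduced spectral synthesis, bypassing Theorem \ref{th_quogen} (which would otherwise require $E$ to be compact). The key idea is to use the standard identifications $C_r^*(G_1\times G_2)=C_r^*(G_1)\otimes_{\min} C_r^*(G_2)$, $\vn(G_1\times G_2)=\vn(G_1)\bar\otimes\vn(G_2)$, and the canonical inclusion $A(G_1)\otimes A(G_2)\subseteq A(G_1\times G_2)$ to transfer the problem from $G_1$ to $G_1\times G_2$. Concretely, given $S\in C_r^*(G_1)$ with $\supp_{\vn}(S)\subseteq E$ and $u\in I(E)$, the goal is to prove $\langle S,u\rangle=0$.

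First I would fix auxiliary elements $T\in C_r^*(G_2)$ and $v\in A(G_2)$ with $\langle T,v\rangle\ne 0$, for instance by taking $v\in A(G_2)$ with $v(e)=1$ (by regularity of $A(G_2)$) and $T=\lambda(g)$ for $g$ drawn from a suitable approximate identity in $L^1(G_2)$, so that $\langle T,v\rangle=\int g v$ is close to $v(e)=1$. Then $S\otimes T\in C_r^*(G_1\times G_2)$ and $u\otimes v\in A(G_1\times G_2)$, and a direct computation with matrix coefficients of $\lambda^{G_1}\otimes\lambda^{G_2}=\lambda^{G_1\times G_2}$ gives
\begin{equation*}
\langle S\otimes T,\, u\otimes v\rangle \;=\; \langle S,u\rangle\,\langle T,v\rangle.
\end{equation*}

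Next I would verify the two hypotheses needed to invoke reduced spectral synthesis of $E\times G_2$. Clearly $u\otimes v\in I(E\times G_2)$ since $u$ vanishes on $E$. For the support inclusion $\supp_{\vn}(S\otimes T)\subseteq \supp_{\vn}(S)\times G_2\subseteq E\times G_2$: given $x\notin\supp_{\vn}(S)$ and $y\in G_2$, pick $u_1\in A(G_1)$ with $u_1(x)\ne 0$ and $u_1\cdot S=0$, and $u_2\in A(G_2)$ with $u_2(y)\ne 0$; then $(u_1\otimes u_2)\cdot(S\otimes T)=(u_1\cdot S)\otimes(u_2\cdot T)=0$ while $(u_1\otimes u_2)(x,y)\ne 0$, witnessing $(x,y)\notin\supp_{\vn}(S\otimes T)$. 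The hypothesis then gives $\langle S\otimes T,u\otimes v\rangle=0$, and the pairing formula together with $\langle T,v\rangle\ne 0$ forces $\langle S,u\rangle=0$. The main technical point is the support computation, which rests on the $A(G_1\times G_2)$-module identity $(u_1\otimes u_2)\cdot(S\otimes T)=(u_1\cdot S)\otimes(u_2\cdot T)$ (verified by pairing both sides against elementary tensors in $A(G_1\times G_2)$ and invoking weak* density); everything else is routine bookkeeping of standard tensor product identifications for group C*-algebras, group von Neumann algebras, and Fourier algebras.
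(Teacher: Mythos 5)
Your proof is correct, and it takes a genuinely different route from the paper's. The paper derives this corollary as an application of Theorem \ref{th_quogen}: it verifies that $\{e\}\times G_2$ has property (l) by constructing a suitable $\theta = \theta_1\otimes\theta_2$, and then invokes the quotient machinery (the maps $\Phi_\theta$, Weil's integral formula, Lohou\'e's restriction lemma). Your argument works directly with the tensor-product identifications $C_r^*(G_1\times G_2)\cong C_r^*(G_1)\otimes_{\min}C_r^*(G_2)$, $\vn(G_1\times G_2)\cong\vn(G_1)\bar\otimes\vn(G_2)$, and the multiplicativity of the pairing on elementary tensors: it embeds the hypotheses on $S$ and $u$ into the product group by tensoring with fixed auxiliaries $T$ and $v$, checks the support inclusion $\supp_{\vn}(S\otimes T)\subseteq\supp_{\vn}(S)\times G_2$ via the module identity $(u_1\otimes u_2)\cdot(S\otimes T)=(u_1\cdot S)\otimes(u_2\cdot T)$, applies the hypothesis, and divides out the nonzero factor $\langle T,v\rangle$. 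All of these steps are sound. What your approach buys is worth noting: Theorem \ref{th_quogen} is stated for \emph{compact} $E\subseteq G/H$ (its proof really uses compactness, since property (l) only supplies a $\theta$ with $\phi(\theta)=1$ on a neighbourhood of a compact set), whereas the corollary is stated for arbitrary closed $E$ -- and it is in fact used with non-compact $E$ in the proof of Theorem \ref{th_every}. Your direct tensor argument works for all closed $E$ without a compactness reduction, so it is both more elementary (no $\Phi_\theta$ construction) and cleaner in scope than the route through Theorem \ref{th_quogen}.
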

\begin{proof}
By Theorem \ref{th_quogen}, it suffices to show
that the subgroup $\{e\}\times G_2$ of $G_1\times G_2$ satisfies property (l).
Fix a compact subset $K$ of $G_1$, viewed as $\left(G_1\times G_2\right)/\left(\{e\}\times G_2\right)$.
Let $\theta_1\in A(G_1)\cap C_c(G_1)$ be such that $\theta_1(x) = 1$ on a neighbourhood of $K$,
and $\theta_2\in A(G_2)\cap C_c(G_2)$ be such that $\int_{G_2}\theta_2(y)dy = 1$.
Define
$\theta:G_1\times G_2\to\mathbb C$ by letting $\theta(x,y) = \theta_1(x)\theta_2(y)$, $x\in G_1$, $y\in G_2$.
Then $\theta\in A(G_1\times G_2)\cap C_c(G_1\times G_2)$ and
$$\phi(\theta)(x) = \theta_1(x)\int_{G_2}\theta(y)dy = \theta_1(x), \ \ \ x\in G_1.$$
\end{proof}

\begin{proposition}\label{l_closedsub}
Let $G$ be a locally compact group, $H\subseteq G$ be an open subgroup and
$E\subseteq H$ be a closed set. If $E$ is a set of reduced spectral synthesis for $G$,
then $E$ is a set of reduced spectral synthesis for $H$.
\end{proposition}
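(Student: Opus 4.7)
The idea is to transfer reduced spectral synthesis from $G$ down to $H$ via the natural functorial bridge between $A(H)$ and $A(G)$ arising from the openness of $H$. I would organise the plan in three stages.

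First, I would set up two key maps. Since $H$ is open (hence clopen) in $G$, extension by zero defines a map $\iota : A(H) \to A(G)$; this is an isometric embedding because, writing $u(s) = (\lambda^H_s\xi, \eta)$ for $\xi, \eta \in L^2(H)$, and extending the vectors by zero to $\tilde\xi, \tilde\eta \in L^2(G)$, one checks that $\iota(u)(s) = (\lambda^G_s\tilde\xi, \tilde\eta)$ (the value vanishes for $s \notin H$, since then $s\tilde\xi$ is supported in the coset $sH$, disjoint from $\supp\tilde\eta \subseteq H$). Dually, restriction $\rho : A(G) \to A(H)$, $\rho(u) = u|_H$, is a contractive surjection, and $\rho \circ \iota = \id_{A(H)}$. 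The adjoint $\rho^* : \vn(H) \to \vn(G)$ is therefore a weak$^*$-continuous isometric $*$-embedding, concretely realised as the amplification coming from the decomposition $L^2(G) = \bigoplus_{Hx \in H\backslash G} L^2(Hx)$ together with the fact that $\lambda^G|_H$ preserves each summand and acts there as a copy of $\lambda^H$.

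Second, I would verify two compatibility properties of $\rho^*$. The first is that $\rho^*$ maps $C^*_r(H)$ into $C^*_r(G)$: for $f\in L^1(H)$ extended by zero to $\tilde f\in L^1(G)$, a direct duality calculation gives $\rho^*(\lambda(f)) = \lambda(\tilde f) \in C^*_r(G)$, and by norm continuity this extends to all of $C^*_r(H)$. The second is that $\rho^*$ preserves $\vn$-supports: $\supp_{\vn}(\rho^*(T)) = \supp_{\vn}(T)$ for $T \in \vn(H)$. The crucial containment $\supp_{\vn}(\rho^*(T)) \subseteq H$ uses the identity $v \cdot \rho^*(T) = \rho^*(v|_H \cdot T)$, valid for $v \in A(G)$; for $t \notin H$, one chooses a compactly supported $v \in A(G)$ with support inside the open coset $tH$ and $v(t) \ne 0$, so that $v|_H \equiv 0$. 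For $t \in H$, any $u \in A(H)$ with $u(t)\ne 0$ lifts via $\iota$ to an element of $A(G)$ with the same value at $t$, and $\rho^*$ is injective, so both inclusions follow.

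Third, the transfer is immediate. Let $T \in C^*_r(H) \cap J_H(E)^\perp$ (subscripts indicating the ambient group). By the two compatibility properties, $\rho^*(T) \in C^*_r(G)$ and $\supp_{\vn}(\rho^*(T)) \subseteq E$, so $\rho^*(T) \in C^*_r(G) \cap J_G(E)^\perp$. The hypothesis that $E$ is of reduced spectral synthesis for $G$ yields $\rho^*(T) \in I_G(E)^\perp$. Given any $u \in I_H(E)$, since $E \subseteq H$ the extension $\iota(u)$ vanishes on $E$, so $\iota(u) \in I_G(E)$; therefore
$$\langle T, u\rangle = \langle T, \rho(\iota(u))\rangle = \langle \rho^*(T), \iota(u)\rangle = 0,$$
showing $T \in I_H(E)^\perp$, as required.

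The main obstacle is the functorial setup, specifically verifying the two compatibility properties of $\rho^*$ (stability of $C^*_r$ and preservation of supports); once these are established the actual synthesis transfer is a short diagram chase. In contrast with the quotient situation in Theorem \ref{th_quogen}, no averaging over cosets and no property (l) are needed, because $\iota$ is directly available as an isometric embedding $A(H) \hookrightarrow A(G)$.
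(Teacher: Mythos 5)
Your proof is correct and follows essentially the same route as the paper's: restriction $r\colon A(G)\to A(H)$, its adjoint $r^{*}\colon\vn(H)\to\vn(G)$, the observation that $r^{*}(C_r^{*}(H))\subseteq C_r^{*}(G)$ because $H$ is open, and a direct diagram chase through the annihilator conditions. The only cosmetic differences are that the paper checks $r^{*}(T)\in J(E)^{\perp}$ by testing directly against $J_0(E)$ rather than proving full preservation of $\vn$-supports, and that it uses an arbitrary preimage $v\in A(G)$ of $u\in I_H(E)$ in place of your explicit extension $\iota(u)$; neither changes the substance of the argument.
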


\begin{proof}
We denote by $I_H(E)$ and $J_H(E)$ the extremal ideals of $A(H)$, associated with the set $E$.
The restriction map $r$, given by $r(u) = u|_{H}$, is a
contractive surjection from $A(G)$ onto $A(H)$
(see  \cite[Section 3]{kaniuth-lau}).
Let $r^* : \vn(H)\to \vn(G)$ be its adjoint.
Since $H$ is open, $r^*(C_r^*(H))\subseteq C_r^*(G)$ (see \cite[Remark 3.3]{kaniuth-lau}).

Suppose that $T\in C_r^*(H)\cap J_H(E)^{\perp}$.
We claim that $r^*(T)\in J(E)^{\perp}$. To see this, let
$v\in A(G)$ be a function with compact support, vanishing on a neighbourhood $V$ of $E$ in $G$.
Then $r(v)$ vanishes on the neighbourhood $V\cap H$ of $E$ in $H$ and has compact support.
Thus,
$$\langle r^*(T),v\rangle = \langle T,r(v)\rangle = 0.$$
By the assumption, $r^*(T)\in I(E)^{\perp}$.
Let $u\in A(H)$ be a function vanishing on $E$. If $v\in A(G)$ is such
that $r(v) = u$, then
$$\langle T,u\rangle = \langle T,r(v)\rangle = \langle r^*(T),v\rangle = 0.$$
Thus, $T\in C_r^*(H)\cap I_H(E)^{\perp}$ and the proof is complete.
\end{proof}

\begin{remark}\label{rem}{\rm
{\bf (i) } Proposition \ref{l_closedsub} does not hold for
arbitrary closed subgroups $H$ of $G$.
In fact, by \cite[Corollary 5.10]{gralmul}, any such subgroup, and hence any of its closed subsets, is a set of uniqueness and therefore a set of
reduced spectral synthesis with respect to $G$.
In particular, any closed subset of $\mathbb R^n\times\{0\}$, $n\geq 4$, is a set of reduced spectral
synthesis for $\mathbb R^{n+1}$. On the other hand, by Proposition \ref{noness},
$S^{n-1}\subseteq \mathbb R^n$ is not a set of
reduced spectral synthesis for $\mathbb R^n$.

\smallskip

{\bf (ii) } It is well-known that the statement of Proposition \ref{l_closedsub} holds for any closed subgroup $H$,
when reduced spectral synthesis is replaced by spectral synthesis.
To see this, one applies similar arguments to the ones from the proof of Proposition \ref{l_closedsub}. }
\end{remark}

\subsection{Failure of reduced spectral synthesis}

A well-known theorem of P. Malliavin's \cite{mall} states
that $A(G)$ fails spectral synthesis whenever $G$ is a non-discrete abelian locally compact group.
The theorem was later reproved using the Varopoulos  tensor algebra techniques (see e.g. \cite[Theorem 11.2.1]{gmcgehee}).
Using results of B. Forrest \cite{forrest} and of E. Zelmanov \cite{zelmanov},
E. Kaniuth and A. T. Lau extended
Malliavin's theorem to arbitrary non-discrete locally compact groups \cite[Proposition 2.2]{kaniuth-lau}.
Here we prove a related result for reduced spectral synthesis.
We will need the following lemma.

\begin{lemma}\label{l_R}
The subgroup $\bb{Z}$ of $\bb{R}$ possesses property (l).
\end{lemma}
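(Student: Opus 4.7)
The plan is to make property (l) for $H = \mathbb{Z} \subseteq \mathbb{R} = G$ completely explicit by identifying the quotient integration map. With Lebesgue measure on $\mathbb{R}$, counting measure on $\mathbb{Z}$, and $\mathbb{R}/\mathbb{Z}$ identified with $\mathbb{T}$, Weil's formula (\ref{eq_weil}) forces $\phi : C_c(\mathbb{R}) \to C(\mathbb{T})$ to be the periodization
\begin{equation*}
\phi(f)(\dot{x}) = \sum_{n \in \mathbb{Z}} f(x+n), \qquad x \in \mathbb{R}.
\end{equation*}
Property (l) demands, for each proper compact $K \subseteq \mathbb{T}$, a function $\theta \in A(\mathbb{R}) \cap C_c(\mathbb{R})$ with $\phi(\theta) = 1$ on a neighbourhood of $K$. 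It is therefore enough to exhibit a \emph{single} $\theta \in C_c^{\infty}(\mathbb{R})$ whose periodization is identically $1$ on $\mathbb{T}$, using the standard inclusion $C_c^{\infty}(\mathbb{R}) \subseteq A(\mathbb{R})$ recorded earlier in the paper.

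Such a $\theta$ is produced by the classical smooth partition-of-unity trick. Pick any nonnegative $\psi \in C_c^{\infty}(\mathbb{R})$ which is strictly positive on $(-1,1)$ and vanishes off $[-1,1]$. For every $x \in \mathbb{R}$ at least one integer $n$ satisfies $x+n \in (-1,1)$, so the periodization
\begin{equation*}
\Psi(x) = \sum_{n \in \mathbb{Z}} \psi(x+n)
\end{equation*}
is a strictly positive, smooth, $1$-periodic function (the sum is locally finite). Set $\theta = \psi/\Psi$; then $\theta \in C_c^{\infty}(\mathbb{R})$ is supported in $[-1,1]$, and by $1$-periodicity of $\Psi$,
\begin{equation*}
\sum_{n \in \mathbb{Z}} \theta(x+n) = \frac{1}{\Psi(x)} \sum_{n \in \mathbb{Z}} \psi(x+n) = 1.
\end{equation*}
Thus $\phi(\theta) \equiv 1$ on all of $\mathbb{T}$, which certifies property (l) for every compact $K \subseteq \mathbb{T}$ simultaneously.

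The argument has no substantive obstacle: its entire content is the observation that the periodicity of $\Psi$ permits the division that produces $\theta$. The only point requiring mild care is that $\psi$ must be strictly positive on an open interval of length greater than $1$, so that $\Psi$ is nowhere zero and $\theta$ inherits the smoothness and compact support of $\psi$; the choice above achieves this. In particular, since the witnessing $\theta$ can be taken independently of $K$, the proof simultaneously shows the stronger statement that there is a global smooth partition of unity for $\mathbb{Z}$ lying in $A(\mathbb{R}) \cap C_c(\mathbb{R})$.
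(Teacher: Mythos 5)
Your proof is correct, and it takes a genuinely different route from the paper's. The paper exploits the hypothesis that $K$ is a \emph{proper} compact subset of $\mathbb{T}$: after a translation one may assume $0\notin K$, so $K$ sits inside a single fundamental domain $(0,1)$, and then one chooses a $K$-dependent bump $\theta\in A(\mathbb{R})$ supported in $(0,1)$ with $\theta\equiv 1$ near $K$; the point is that for $x\in(0,1)$ the periodization $\sum_{h\in\mathbb{Z}}\theta(x+h)$ collapses to $\theta(x)$, so $\phi(\theta)=1$ near $K$. Your argument instead produces a single $\theta=\psi/\Psi\in C_c^\infty(\mathbb{R})$ whose periodization is identically $1$ on all of $\mathbb{T}$, via the standard smooth partition-of-unity normalization; all the required facts (local finiteness and strict positivity of $\Psi$, its $1$-periodicity, and the inclusion $C_c^\infty(\mathbb{R})\subseteq A(\mathbb{R})$) are correctly verified. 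The paper's argument is shorter and more local; yours is slightly more work but buys a uniform witness $\theta$ that is independent of $K$, needs no translation, and in fact establishes the stronger global statement $\phi(\theta)\equiv 1$, which in particular shows that the properness hypothesis on $K$ is not actually needed for $H=\mathbb{Z}\subseteq\mathbb{R}$.
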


\begin{proof}
Let $q : \bb{R}\to \bb{R}/\bb{Z}$ be the quotient map, and identify $\bb{R}/\bb{Z}$ with $[0,1)$ in the
canonical way.
Fix a proper compact subset $K\subseteq [0,1)$; we assume that
$0\not\in K$, for otherwise we may replace $K$ by a suitable translation.
Let $\theta\in A(\mathbb R)$ be such that $\theta (x) = 1$ on a neighborhood $U$ of $K$ and $\theta(x)=0$ whenever $x\not\in (0,1)$.
If $x\in U$ then
$$\phi(\theta)(\dot{x})=\sum_{h\in\mathbb Z}\theta(x+h)=\theta(x)=1.$$
\end{proof}

\begin{theorem}\label{th_every}
Let $G$ be a non-discrete locally compact group that has an open abelian subgroup.
Then $G$ has a subset which fails reduced spectral synthesis.
\end{theorem}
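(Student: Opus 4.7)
The plan is to reduce, via the functorial results of the previous subsection, to finding an explicit failure set inside a small list of ``base'' groups for which one is already at hand. Since $G$ is non-discrete and $H\subseteq G$ is an open abelian subgroup, $H$ contains every sufficiently small identity neighbourhood and is therefore non-discrete itself; the contrapositive of Proposition \ref{l_closedsub} lets me replace $G$ by $H$ throughout. I would then invoke the Pontryagin--van Kampen structure theorem to write $H\cong \mathbb R^n\times H'$, with $H'$ admitting a compact open subgroup $K$, and pass, via one more application of Proposition \ref{l_closedsub}, to the open subgroup $H_0=\mathbb R^n\times K$. Non-discreteness forces either $n\geq 1$ or $K$ infinite, and I would produce a failure set in $H_0$ case by case.

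When $n\geq 1$, the subgroup $\{0\}\times K$ is compact and normal in $H_0$ with quotient $\mathbb R^n$, so the contrapositive of Corollary \ref{c_proess} reduces the task to exhibiting a failure set in $\mathbb R^n$. For this I would take the light cone of Proposition \ref{p_sec} when $n\geq 3$; when $n=1$, Lemma \ref{l_R} together with the contrapositive of Theorem \ref{th_quogen} lifts K\"orner's Helson $M$-set $E\subseteq\mathbb T$ from Example \ref{ex_ess}(iv) through the quotient $\mathbb R\to\mathbb T$ to a failure set $q^{-1}(E)\subseteq\mathbb R$; and when $n=2$ I form the product of the $n=1$ example with $\mathbb R$ and apply the contrapositive of Corollary \ref{l_prop}. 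When $n=0$ and the compact abelian group $K$ is not totally disconnected, the dual $\widehat K$ is not torsion and hence contains a copy of $\mathbb Z$; dualising this inclusion produces a continuous surjection $K\to\mathbb T$ with compact kernel, and Corollary \ref{c_proess} again lifts K\"orner's set back to $K$.

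The main obstacle is the residual profinite case, in which $K$ is infinite, compact, abelian, and totally disconnected. Every continuous quotient of $K$ is then again totally disconnected, so there is no continuous surjection $K\to\mathbb T$ and none of the failure sets already assembled transports to $K$ via the functoriality developed so far. To close this case one would need an argument that goes outside the transport machinery, for instance a K\"orner-style direct construction of a Helson set of multiplicity inside a compact totally disconnected abelian group (such as $\mathbb Z_p$, which appears as a quotient of any profinite $K$ whose dual contains a Pr\"ufer $p$-subgroup), or an appeal to a general existence theorem for such sets in arbitrary non-discrete LCA groups. I expect this profinite base case to be the step that genuinely goes beyond the machinery of Subsections \ref{s_rssde}--\ref{s_fprss}, and thus the main technical hurdle of the proof.
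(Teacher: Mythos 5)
Your reduction to an open subgroup of the form $\mathbb R^n\times K$ is structurally the same as the paper's, and you have correctly identified the residual profinite case ($n=0$, $K$ infinite, compact, abelian, totally disconnected) as the place where the transport machinery runs out. That is indeed a genuine gap, and you cannot close the theorem without an additional input there. The paper fills it by citing Rudin \cite[7.8.3--7.8.6]{rudin}: for \emph{every} infinite compact abelian group $H$ and every $p>2$, there exists a pseudo-measure $F\in{\rm PM}(H)$ with $\hat F\in\ell^p(\widehat H)$ (so $F$ is in particular a pseudo-function) and $F\notin N(\supp F)$. This holds with no connectedness hypothesis on $H$, so $\supp F$ already fails reduced spectral synthesis inside $H$; Proposition~\ref{l_closedsub} then lifts the failure to $G$. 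In other words, the ``general existence theorem'' you conjectured is exactly what is invoked. Once this is available, the distinction you draw between $\widehat K$ being torsion or not is unnecessary, and so are Körner's Helson $M$-sets in the $n=0$ branch.

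A smaller issue, independent of the profinite gap, concerns your $n\geq 1$ branch. Corollary~\ref{c_proess} (and Theorem~\ref{th_quogen} behind it) requires the set $E\subseteq G/H$ to be \emph{compact}, because property~(l) is only stated for compact subsets of $G/H$. The failure sets you propose to feed into it — the light cone of Proposition~\ref{p_sec}, the preimage $q^{-1}(E)\subseteq\mathbb R$ of Körner's set, and its product with $\mathbb R$ — are all non-compact, so the contrapositive of Corollary~\ref{c_proess} does not directly apply. The paper sidesteps this by reversing the direction of travel: instead of quotienting $\mathbb R^n\times K$ down to $\mathbb R^n$, it first uses the \emph{compact} set furnished by Rudin's theorem in $\mathbb T$, lifts it to $\mathbb R$ through Theorem~\ref{th_quogen} and Lemma~\ref{l_R}, and then extends to $\mathbb R^n\times K$ via the product Corollary~\ref{l_prop} and finally to $G$ via Proposition~\ref{l_closedsub}. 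Restructuring your $n\geq 1$ case along those lines, with the Rudin input replacing Körner's example and the light cone, both removes the compactness obstruction and makes the case analysis on $n$ unnecessary.
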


\begin{proof}
By \cite[7.8.3-7.8.6]{rudin},
if $H$ is a compact abelian group and $p>2$,
there exists  a pseudo-measure $F\in {\rm PM}(H)$ such that
$\hat F\in \ell^p(\widehat H)$
and $F\not\in N(\supp(F))$.
It follows that the subset $\supp(F)$ of $H$ does not satisfy
reduced spectral synthesis. Taking $H = \bb{T}$, and applying
Theorem \ref{th_quogen} and Lemma \ref{l_R},
we conclude that there exists a closed subset of $\bb{R}$ that fails reduced spectral synthesis.
Since every non-discrete locally compact abelian group $G$ contains an open subgroup of the form
$\bb{R}^n\times H$, where $H$ is compact and $n\geq 0$ (see e.g. \cite[Theorem 2.4.1]{rudin}), we conclude  by
Corollary \ref{l_prop} and Proposition \ref{l_closedsub}
that every such $G$ has a subset that fails reduced spectral synthesis.
The claim follows from a further application of Proposition \ref{l_closedsub}.
\end{proof}

\begin{remark}\rm
S. Saeki \cite{saeki} has shown that any locally compact non-discrete metrisable abelian group
$G$ has a closed $U_1$-set that is not a set of uniqueness.
By \cite[Section 4.5]{gmcgehee}, this implies the statement of Theorem \ref{th_every} for this particular class of groups.
\end{remark}

\begin{theorem}\label{p_Lie}
Let $G$ be a simply connected, connected  nilpotent Lie group. Then $G$ has a subset which fails reduced spectral synthesis.
\end{theorem}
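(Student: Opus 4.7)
The plan is to reduce to the abelianisation and apply Theorem \ref{th_quogen}. If $G$ is abelian then, as a simply connected connected Lie group, $G \cong \bb{R}^d$; since $\bb{R}^d$ is non-discrete with an open abelian subgroup, Theorem \ref{th_every} supplies the required subset. So assume $G$ is non-abelian. The abelianisation $G/[G,G]$ is a simply connected connected abelian Lie group, hence isomorphic to $\bb{R}^n$ for some $n \geq 2$: if $\g/[\g,\g]$ were one-dimensional, $\g$ would be generated as a Lie algebra by a single element, which combined with nilpotency would force $\g$ to be abelian, contradicting the assumption.

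Set $H = [G,G]$ and let $q : G \to G/H$ be the quotient map. The key preparatory step is to verify that $H$ possesses property (l). Fix a linear complement $\mathfrak{v}$ to $[\g,\g]$ in $\g$; by the Mal'cev coordinates of the second kind, the map $\mathfrak{v} \times H \to G$, $(v,h) \mapsto \exp(v)h$, is a global diffeomorphism, so $q$ is a trivial fibre bundle admitting the smooth global section $s : G/H \to G$, $\exp(v)H \mapsto \exp(v)$. Given a compact $K \subseteq G/H$, pick $\alpha \in C_c^{\infty}(G/H)$ equal to $1$ on a neighbourhood of $K$ and $\beta \in C_c^{\infty}(H)$ with $\int_H \beta(h)\,dh = 1$, and define
\[
\theta(s(\dot{x}) h) := \alpha(\dot{x})\beta(h), \qquad \dot{x} \in G/H,\ h \in H.
\]
Then $\theta \in C_c^{\infty}(G)$, and since smooth compactly supported functions on a Lie group lie in the Fourier algebra, $\theta \in A(G) \cap C_c(G)$. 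Weil's formula (\ref{eq_weil}) gives $\phi(\theta)(\dot{x}) = \alpha(\dot{x})\int_H \beta(h)\,dh = \alpha(\dot{x})$, so $\phi(\theta) = 1$ on a neighbourhood of $K$, confirming property (l).

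With property (l) in hand, it suffices to produce a compact set $E \subseteq \bb{R}^n$ failing reduced spectral synthesis; for then Theorem \ref{th_quogen}, applied contrapositively, yields that $q^{-1}(E) \subseteq G$ is a closed subset failing reduced spectral synthesis. For $n \geq 4$ one takes $E = S^{n-1}$ and invokes Proposition \ref{noness}. For $n = 3$, I revisit the proof of Proposition \ref{p_sec} with its parameter equal to $2$: the pseudo-function $Q$ constructed there is actually supported on the compact set $B' = B \cap \{(y,t) : 1/2 \leq t \leq 2\}$ (since the cut-off $a$ is supported in $[1/2, 2]$), and the test function $u$ used to witness $\langle Q,u\rangle \neq 0$ vanishes on all of $B$, hence lies in $I(B')$; thus $B' \subseteq \bb{R}^3$ is compact and fails reduced spectral synthesis.

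The principal obstacle is the case $n = 2$, which arises for example when $G$ is the Heisenberg group. The paper's Schwartz- and cone-type constructions yield compact non-synthetic sets only in dimension at least three, so this case demands a separate source of a compact subset of $\bb{R}^2$ failing reduced spectral synthesis. A natural route is to adapt the Körner-type construction of a compact Helson set of multiplicity (cf.\ Example \ref{ex_ess}(iv)) to $\bb{R}^2$, since Helson sets of multiplicity automatically fail reduced spectral synthesis. Alternatively, for the Heisenberg case one could bypass Theorem \ref{th_quogen} and work directly on $G$, constructing via the Kirillov orbit picture a pseudo-function supported on a suitably chosen closed subset of $G$ that witnesses the failure.
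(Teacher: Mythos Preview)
Your overall strategy coincides with the paper's: pass to the abelianisation $G/[G,G]$, verify that $H=[G,G]$ has property (l), and then pull back a set failing reduced spectral synthesis via Theorem \ref{th_quogen}. Your verification of property (l) through Mal'cev coordinates of the second kind is essentially the same computation the paper performs with a Jordan--H\"older basis: both exploit the triangular form of the Campbell--Baker--Hausdorff multiplication so that, for a product function $\theta(s(\dot x)h)=\theta_1(\dot x)\theta_2(h)$, the fibre integral collapses to $\phi(\theta)(\dot x)=\theta_1(\dot x)\int_H\theta_2\,dh$, and both use that $C_c^\infty(G)\subseteq A(G)$.

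The divergence is in how $E\subseteq\bb R^n$ is supplied. The paper never splits by dimension: it invokes Theorem \ref{th_every} once, which furnishes a closed subset of every non-discrete abelian locally compact group---in particular of $\bb R^n$ for all $n\ge 1$---failing reduced spectral synthesis. Your sphere/cone route via Propositions \ref{noness} and \ref{p_sec} only reaches $n\ge 3$, and you yourself identify the $n=2$ case (which genuinely occurs, e.g.\ for the Heisenberg group) as unresolved. So your argument has a real gap at $n=2$; the K\"orner-in-$\bb R^2$ or Kirillov-orbit ideas you sketch would work in principle but are far heavier than what the paper needs.

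Your concern that Theorem \ref{th_quogen} requires $E$ compact, while the output of Theorem \ref{th_every} need not be, is legitimate; the paper is a bit elliptic here. The clean repair, in the spirit of the paper, is not to hunt for a compact non-RSS set in $\bb R^n$ dimension by dimension, but to note that the Rudin argument opening the proof of Theorem \ref{th_every} already gives a compact such set in any compact abelian group (hence in $\bb T^n$), and then verify property (l) for the kernel of the composite $G\to G/[G,G]\cong\bb R^n\to\bb T^n$; this is immediate from the same Jordan--H\"older/Mal'cev computation combined with the idea of Lemma \ref{l_R}.
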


\begin{proof}
Let $\g$ be the Lie algebra of $G$ and $\exp:\g\to G$ be the exponential map.
Denote by  $[G,G]$ the commutator of $G$.
The assumptions imply that $G$ is exponential,
that is, $\exp$ is a diffeomorphism, and  we have that $[G,G]=\exp([\g,\g])$.

If $[G,G] = \{e\}$ then $G$ is abelian and connected. Hence, by Theorem \ref{th_every}, it fails reduced spectral synthesis.
Let $[G,G]\ne \{e\}$. Then $G/[G,G]\ne\{e\}$ is abelian and hence, by Theorem \ref{th_every},
has a closed subset that does not satisfy reduced spectral synthesis.
Therefore, in order to prove the statement, it suffices by Theorem \ref{th_quogen}
to show that $[G,G]$ has property (l).
Since $[\g,\g]$ is an ideal of $\g$, \cite[Corollary 6.1.4]{jean_book} implies that there exists
a Jordan-H\"older basis $\mathcal Z=\{Z_1,\ldots,Z_n\}$ of $\g$ such that $Z_l,\ldots, Z_n$ is a basis of $[\g,\g]$ for some $l\geq 1$.
Via the exponential map, we can identify $G$  with the vector space
$\g\simeq\mathbb R^n$ as manifolds and if we equip $\g$ with the Campbell-Baker-Hausdorff product $\cdot_\g$ then $\exp:(\g,\cdot_\g)\to (G,\cdot)$ is a group isomorphism.
By \cite[Theorem 6.1.13]{jean_book},  for every Jordan-H\"older basis
$\mathcal Z=\{Z_1,\ldots Z_n\}$ of $\g$, there exist polynomial functions $q_j$, $j=2,\ldots, n$, defined on $\mathbb R^{j-1}\times\mathbb R^{j-1}$, such that for any $s$, $t\in\mathbb R^n$, for $X=\sum_{j=1}^ns_jZ_j$, $Y=\sum_{j=1}^nt_jZ_j$ one has
$$X\cdot_\g Y=\sum_{j = 1}^n(s_j+t_j+ q_j(s_1,\ldots,s_{j-1},t_1,\ldots,t_{j-1}))Z_j,$$
where $q_1\equiv 0$.
On the other hand, due to the Campbell-Baker-Hausdorff formula, we have
$X\cdot_\g Y=X+Y+Z$, where $Z\in[\g,\g]$. Hence,
 if $Y\in[\g,\g]$, we have $t_j=0$, $j=1,\ldots, l-1$, and
\begin{eqnarray*}
X\cdot_\g Y
& = &
\sum_{i=1}^{l-1}s_jZ_j + \left(s_l+t_l+q_l(s_1,\ldots, s_{l-1})\right)Z_l\\
& + &
\sum_{j=l+1}^n(s_j+t_j+q_j(s_1,\ldots,s_{j-1},t_l,\ldots,t_{j-1}))Z_j.
\end{eqnarray*}
Let now $K$ be a proper compact subset of $G/[G,G]$.
The latter group can be identified with $\g/[\g,\g]\simeq \mathbb R^{l-1}$; we will also identify $[G,G]$ with $[\g,\g]\simeq \mathbb R^{n-l+1}$ ($\sum_{i=l}^ns_iZ_i\to (s_l,\ldots,s_n)\in\mathbb R^{n-l+1}$ with the multiplication induced by $\cdot_{\g}$).
Let $\theta_1 \in C_c^\infty(\mathbb R^{l-1})$ be
such that $\theta_1=1$ on a neighborhood of $K$ and $\theta_2\in C_c^\infty(\mathbb R^{n-l+1})$ such that
$\int_{\mathbb R^{n-l+1}}\theta_2(t) dt=1$.
Identifying $G$ with $\mathbb R^n$ we
set $\theta(x_1,\ldots, x_n)=\theta_1(x_1,\ldots,x_{l-1})\theta_2(x_{l},\ldots x_n)$.
As $\theta\in C_c^\infty(\mathbb R^n)$,
we have $\theta\in A(G)$ (see e.g. \cite[(3.23)]{eym} or \cite[(3.8) and Lemma 3.3]{lutu}).
Moreover, as the Haar measure on $[G,G]$, identified with
$\mathbb R^{n-l+1}$, is the Lebesgue measure, we have for $X=(s_1,\ldots,s_n)$
\begin{eqnarray*}
& & \phi(\theta)(\dot{X})
= \int_{[G,G]}\theta (X\cdot_\g Y)dY\\
& = & \int_{\mathbb R^{n-l+1}}\theta_1(s_1,\ldots,s_{l-1})\theta_2(s_{l}+t_{l}+q_{l}(s_1,\ldots,s_{l-1}),\ldots,\\&&s_n+t_n+q_n(s_1,\ldots,s_{n-1}, t_{l},\ldots, t_{n-1}))dt_{l}\ldots dt_n\\
& = & \theta_1(s_1,\ldots,s_{l-1})\int_{\mathbb R^{n-l+1}}\theta_2(t)dt=\theta_1(s_1,\ldots,s_{l-1}).
\end{eqnarray*}
\end{proof}

We finish this section with a more precise
form of the failure of reduced spectral synthesis, inspired by the results in \cite{collela}.

\begin{theorem}\label{p_clint}
Let $G$ be a non-discrete second countable locally compact group that fails reduced spectral synthesis.
Then there exists a subset of $G$ which is the closure of its interior and does not satisfy reduced spectral synthesis.
\end{theorem}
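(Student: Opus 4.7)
The plan is to argue by contradiction: I would assume that every closed subset of $G$ equal to the closure of its interior satisfies reduced spectral synthesis, and then derive that every closed subset of $G$ does so as well, contradicting the hypothesis. Given an arbitrary closed $E \subseteq G$, I first decompose $E = E_1 \cup E_2$ with $E_1 := \overline{E^\circ}$ regular closed and $E_2 := \overline{E \setminus E_1}$; a brief topological argument shows $E_2$ has empty interior (any point in its interior would have a neighbourhood inside $E$, hence inside $E^\circ \subseteq E_1$, while simultaneously being a limit point of $E \setminus E_1$). An auxiliary lemma -- proved by a partition-of-unity argument in $A(G)$ via its regularity, analogous to the classical fact that the union of two synthetic sets is synthetic -- would establish that the union of two sets of reduced spectral synthesis is itself a set of reduced spectral synthesis. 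Combined with the contradiction hypothesis, which forces $E_1$ to satisfy RSS, the failure of RSS on $E$ must come from $E_2$, and the problem reduces to showing that every closed nowhere-dense subset of $G$ satisfies RSS.

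Let $F \subseteq G$ be closed, nowhere dense, and fix a witness pair $(T, u)$ with $T \in C_r^*(G) \cap J(F)^\perp$, $u \in I(F) \cap C_c(G)$ and $\langle T, u \rangle \ne 0$, obtained from the reduced local spectral synthesis characterisation in Definition~\ref{d_es}. Using second countability, I would pick a countable family $(B_n)$ of open balls with $\overline{B_n} \cap F = \emptyset$ (possible since $G \setminus F$ is open and dense) that accumulate at every point of $F$; set $U := \bigcup_n B_n$ and $\widetilde F := \overline{U}$. Then $\widetilde F$ is the closure of an open set, hence regular closed, and $F \subseteq \widetilde F$ by the accumulation property. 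The goal is to construct $\eta \in A(G) \cap C_c(G)$ with $\eta|_{\overline{B_n}} = 0$ for every $n$ and $\langle T, u\eta \rangle \ne 0$; once such $\eta$ is in hand, $v := u\eta$ satisfies $v \in I(\widetilde F)$ (as $v|_U = 0$, and hence $v|_{\overline U} = 0$ by continuity), while $T \in C_r^*(G) \cap J(\widetilde F)^\perp$ (since $\supp_{\vn}(T) \subseteq F \subseteq \widetilde F$), so $\langle T, v \rangle \ne 0$ witnesses the failure of RSS on the regular closed set $\widetilde F$, delivering the contradiction.

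The main obstacle is the construction of $\eta$. Continuity of $\eta$ combined with the accumulation of $(B_n)$ at $F$ forces $\eta|_F = 0$, and for a ``first-order'' pseudo-function $T$ as in the proof of Proposition~\ref{noness}, this vanishing can cause $\langle T, u\eta \rangle = 0$ for purely differential reasons. Overcoming this requires a delicate balance in the choice of $(B_n)$: they must accumulate at every point of $F$ (to secure $F \subseteq \widetilde F$) while remaining sparse enough that the closed subspace $\{\eta \in A(G) : \eta|_{\overline{B_n}} = 0 \text{ for all } n\}$ is not contained in the kernel of the bounded functional $\eta \mapsto \langle T, u\eta \rangle = \langle u \cdot T, \eta \rangle$ on $A(G)$. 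The careful selection of the balls, together with the extraction of $\eta$ from this subspace, is the core analytic step and exploits the regularity of $A(G)$, the compact support of $u$, and the second countability of $G$ in an essential way.
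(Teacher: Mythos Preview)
Your overall strategy has a genuine gap: the construction of $\eta$, which you yourself identify as ``the core analytic step,'' is never carried out. You describe the tension (the balls $B_n$ must accumulate at $F$, forcing $\eta|_F = 0$, which threatens to kill the pairing) but you do not resolve it. This is not a routine detail; it is the entire content of the theorem.

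Moreover, the multiplicative route you propose is harder than necessary. Since $\eta|_F = 0$ is forced and $u|_F = 0$ already, the product $u\eta$ vanishes to ``second order'' on $F$, and for pseudo-functions such as those in Proposition~\ref{noness} there is a real danger that $\langle T, u\eta\rangle = 0$ regardless of how the $B_n$ are chosen. The paper avoids this by \emph{subtracting} rather than multiplying: starting from $\varphi \in I(E)$ with $\langle S,\varphi\rangle \ne 0$, it constructs small compactly supported functions $\varphi_n \in A(G)$ with $\supp(\varphi_n)\cap E = \emptyset$ and $\varphi_n = \varphi$ on a compact neighbourhood $J_n$ of a point $y_n \notin E$. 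Then $\varphi - \sum_n \varphi_n$ vanishes on $D := E \cup \bigcup_n J_n$, and because each $\varphi_n$ is supported off $E$ one has $\langle S,\varphi_n\rangle = 0$, so the pairing $\langle S, \varphi - \sum_n \varphi_n\rangle = \langle S,\varphi\rangle$ is preserved automatically. The $y_n$ are chosen to accumulate at every point of $\partial E$ (after a preliminary Cantor--Bendixson reduction to a perfect set), which makes $D$ regular closed. No delicate balance is required.

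Two further remarks. First, your contradiction setup and decomposition $E = E_1 \cup E_2$ buy nothing: after the reduction you still need precisely the statement ``if a closed $F$ fails RSS then some regular closed $\widetilde F \supseteq F$ fails RSS,'' which is the theorem itself. Second, the auxiliary lemma that the union of two RSS sets is RSS is not as immediate as its spectral-synthesis analogue: the usual argument passes through $J(E_1 \cup E_2)^\perp = \overline{J(E_1)^\perp + J(E_2)^\perp}^{w^*}$, but the summands need not lie in $C_r^*(G)$, so this would require separate justification.
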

\begin{proof}
Let $E\subseteq G$ be a closed set that fails reduced spectral synthesis.
Let $\nph\in I(E)$ and $S\in C_r^*(G)$ be such that
$\supp_{\vn}(S) \subseteq E$ and $\langle S,\nph\rangle\ne 0$.
Recall  that the successive Cantor-Bendixson
derivatives of the set $E$ are defined as follows: let $E_0 = E$ and, for an ordinal
$\beta$, let $E_{\beta}$ be equal to the set of all
limit points of $E_{\beta-1}$ if $\beta$ has a predecessor, and to
$\cap_{\gamma < \beta} E_{\gamma}$ if $\beta$ is a limit ordinal.
Since $G$ is second countable, it is a Polish space. Hence, as $E$ is a closed subset of $G$, there exists a countable ordinal $\alpha$
such that $E_{\alpha} = E_{\alpha+1}$ and  $E_{\alpha}$ is a perfect set \cite[Theorem 6.11]{kechris}.
Moreover, $E_{\beta}\setminus E_{\beta+1}$ is a countable set consisting of
the isolated points of $E_\beta$ for every $\beta < \alpha$.

We claim that $E_\alpha$ fails reduced spectral synthesis.
It suffices to show that $\supp_{\vn}(S) \subseteq E_\alpha$.
We proceed by transfinite induction.
Let $\beta \leq \alpha$ and assume that $\supp_{\vn}(S) \subseteq E_{\gamma}$ for all $\gamma < \beta$.
Suppose that $\beta$ has a predecessor and let
$\{x\}$ be an isolated point of $E_{\beta-1}$.
By the regularity of $A(G)$,
there exists $u \in A(G)$ such that $\supp(u) \cap (E_{\beta-1}\setminus\{x\}) = \emptyset$ and
$u = 1$ on a compact neighborhood of $x$.
By \cite[Proposition 4.8]{eym},
$\supp_{\vn} (u \cdot S) \subseteq \{x\}$.
Note that $1-u\in B(G)$. Since $1-u=0$ on a neighbourhood of $x$,
for any $v \in J\left(E_{\beta-1}\setminus\{x\}\right)$, we have
that $(1-u)v \in J(E_{\beta-1})$  and hence
$\langle S-u\cdot S,v\rangle = \langle S,(1-u)v \rangle = 0$, implying
that $\supp_{\vn} (S -u\cdot S) \subseteq E_{\beta-1}\setminus\{x\}$.
By \cite[Corollary 5.3]{gralmul}, $\{x\}$ is a set of uniqueness and, since
$u \cdot S \in C_r^*(G)$, we have that $u \cdot S = 0$.
It follows that $\supp_{\vn}(S)\subseteq E_{\beta-1}\setminus\{x\}$.
Since this holds for any isolated point $x$ of $E_{\beta}$, we obtain
$\supp_{\vn}(S)\subseteq E_{\beta}$.
Now suppose that $\beta$ is a limit ordinal, and write $E_{\beta} = \cap_{n=1}^{\infty} E_{\beta_n}$,
for $\beta_n < \beta$, $n\in \bb{N}$, and $\beta = \cup_{n=1}^{\infty} \beta_n$.
Then $\supp_{\vn}(S)\subseteq E_{\beta_n}$ for all $n$, and hence $\supp_{\vn}(S)\subseteq E_{\beta}$.
We conclude that
$\supp_{\vn}(S)\subseteq E_\beta$ for all $\beta\leq\alpha$.
We hence hereafter assume that $E = E_{\alpha}$.

Let $\partial E$ be the boundary of $E$ and $(x_n)_{n\in \bb{N}}$ be a sequence,
dense in $\partial E$.
Let $y_n\not\in E$ be such that $\text{dist}(x_n,y_n)\to 0$ and $|\nph(y_n)|<2^{-2n}$.
Choose $\psi_n\in A(G)$  such that $\psi_n(y_n)=1$ and $\|\psi_n\|_{A(G)} = 1$.  Then
$\varphi(y_n)\psi_n - \varphi$ vanishes at $y_n$, $n\in \bb{N}$.
Since singletons satisfy spectral synthesis, there exists $\phi_n\in A(G)$
vanishing on an open neighborhood $I_n$ of $y_n$ and such that
$\|\phi_n-\varphi(y_n)\psi_n+\varphi\|_{A(G)} < 2^{-2n}$. Let $\tilde\varphi_n=\phi_n+\varphi$.
We have that
$\tilde\varphi_n = \varphi$ on $I_n$ and
$$\|\tilde\varphi_n\|_{A(G)}\leq 2^{-2n}+|\varphi(y_n)|\|\psi_n\|_{A(G)} \leq 2^{-n}.$$

We next show that
there exist a compactly supported function $\varphi_n\in A(G)$ and a  compact neighborhood $J_n$ of $y_n$, $n\in \bb{N}$,
such that
\begin{itemize}
\item[(i)] $\supp(\nph_n) \cap \supp(\nph_m) = \emptyset$ and $J_n\cap J_m = \emptyset$ if $n\neq m$,
\item[(ii)] $\|\varphi_n\|\leq 2^{-n+1}$, $\supp(\nph_n) \cap  E = \emptyset$ and $\varphi_n = \varphi$ on $J_n$ for all $n$.
\end{itemize}
To this end,
let $U_n$ be an open neighbourhood of $y_n$, $n\in \bb{N}$, such that
$U_n\cap U_m = \emptyset$, for $n\neq m$, and $U_n \cap E = \emptyset$, $n\in \bb{N}$.
We claim that there exist a constant $C > 0$ and functions
$u_n\in A(G)$ such that $\supp(u_n) \subseteq U_n$,
$u_n = 1$ on a compact neighbourhood $K_n$ of $y_n$, and $\|u_n\|\leq C$, $n\in \bb{N}$.
For a compact neighbourhood $M_n$ of $e$ with $M_n\subseteq y_n^{-1} U_n$,
let $V_n$ be a symmetric neighbourhood of $e$ such that $M_nV_n^2\subseteq y_n^{-1} U_n$.
Let $u_{M_n} = \chi_{M_nV_n}\ast\check\chi_{V_n}/m(V_n)$. Then
$u_{M_n}(x) = 1$ if $x\in M_n$, $u_{M_n}(x) = 0$ if $x\notin y_n^{-1} U_n$ and $\|u_{M_n}\|_{A(G)} \leq m(M_nV_n)^{1/2}/m(V_n)^{1/2}$.
As $G$ is second countable, there exists a decreasing sequence $(L_p)_{p\in \bb{N}}$ of
compact neighbourhoods of $e$ such that $\cap_{p\in \bb{N}} L_p = \{e\}$.
It follows that we may choose  the compact set $M_n\subseteq G$ such that $m(M_nV_n)^{1/2}/m(V_n)^{1/2} < 2$.
Now let $u_n$ be given by $u_n(x) = u_{M_n}(y_n^{-1} x)$, $x\in G$, and set
$\nph_n = \tilde\nph_n u_n$ and $K_n = y_n M_n$.
Let $J_n = \overline{I_n \cap \text{int}(K_n)}$.

Note that the function $\nph - \sum_{n=1}^{\infty} \nph_n$ belongs to $A(G)$ and vanishes on
the set $D \stackrel{def}{=} E\cup(\cup_{n\in \bb{N}} J_n)$.
Since $\nph_n$ has compact support, disjoint from $E$, and $\supp_{\vn}(S)\subseteq E$, we have that $\langle S, \nph_n\rangle = 0$, $n\in \bb{N}$.
Thus,
$$\left\langle S, \nph - \sum_{n=1}^{\infty} \nph_n\right\rangle = \langle S,\nph\rangle\ne 0$$
and hence $D$ is not a set of reduced spectral synthesis.
Finally, note that $D = \overline{\text{int}(D)}$.
\end{proof}


\section{Compact operator synthesis}\label{s_cos}

In this section, we define an operator version of reduced spectral synthesis,
exhibit a number of examples and establish some of its functorial properties.

\subsection{Definitions and basic properties}\label{s_dbp}
If $H_1$ and $H_2$ are Hilbert spaces, we denote by
$\cl B(H_1,H_2)$ the space of all bounded linear operators from $H_1$
into $H_2$, and by $\cl K(H_1,H_2)$ (resp. $\cl C_1(H_1,H_2)$,
$\cl C_2(H_1,H_2)$) the space of compact (resp. nuclear,
Hilbert-Schmidt) operators in $\cl B(H_1,H_2)$. We set $\cl K = \cl K(H_1,H_2)$ and $\cl C_2 = \cl C_2(H_1,H_2)$ for brevity.
As usual, we
write $\cl B(H) = \cl B(H,H)$. The space $\cl C_1(H_2,H_1)$ (resp.
$\cl B(H_1,H_2)$) can be naturally identified with the Banach space
dual of $\cl K$ (resp. $\cl C_1(H_2,H_1)$), the duality
being given by the map $(T,S)\to \langle T,S\rangle
\stackrel{def}{=}\tr(TS)$. Here $\tr A$ denotes the trace of a
nuclear operator $A$.

Let $(X,\mu)$ be a standard measure space; thus, $X$ is equipped with the Borel $\sigma$-algebra of
a locally compact Hausdorff metrisable complete topology, with respect to which $\mu$ becomes a regular Borel
$\sigma$-finite measure.
For a measurable set $\alpha\subseteq X$, we write $\chi_{\alpha}$ for its characteristic function.
If $a\in L^{\infty}(X,\mu)$, let $M_a\in \cl B(L^2(X,\mu))$ denote the
operator of multiplication by $a$. The collection
$$\cl D_X = \left\{M_a : a\in L^{\infty}(X,\mu)\right\}$$
is a maximal abelian selfadjoint
algebra (for short, masa) on $L^2(X,\mu)$.
If $\alpha\subseteq X$ is a measurable set, we let $P(\alpha) = M_{\chi_{\alpha}}$.

Let $(Y,\nu)$ be a(nother) standard measure space which, along with $(X,\mu)$, will be
fixed throughout this section.
We set $H_1 = L^2(X,\mu)$ and $H_2 = L^2(Y,\nu)$, and equip
$X\times Y$ with the product measure $\mu\times \nu$.
A subset of
$X\times Y$ is said to be a {\it measurable rectangle} (or simply a
{\it rectangle}) if it has the form $\alpha\times\beta$, where
$\alpha\subseteq X$ and $\beta\subseteq Y$ are measurable subsets. A
subset $E\subseteq X\times Y$ is called {\it marginally null} if
$E\subseteq (M\times Y)\cup(X\times N)$, where $M\subseteq X$ and $N\subseteq Y$ are null.
Two subsets $E,F\subseteq X\times Y$ are called
{\it marginally equivalent} (written $E\cong F$) if their symmetric difference is marginally null.
We say that $E$ is \emph{marginally contained} in $F$ if $E\setminus F$ is marginally null, and write
$E\subseteq_{\omega} F$.
A subset $E$ of $X\times Y$ is called {\it $\omega$-open} if it is
marginally equivalent to the union of a countable set of rectangles.
The complements of $\omega$-open sets are called {\it
$\omega$-closed}. It is clear that the class of all $\omega$-open
(resp. $\omega$-closed) sets is closed under countable unions (resp.
intersections) and finite intersections (resp. unions).
Given any family $\cl E$ of $\omega$-open sets, there exists a smallest, with respect to marginal inclusion,
$\omega$-open set $\cup_{\omega}\cl E$ with the property that $E\subseteq_{\omega} \cup_{\omega}\cl E$ for every $E\in \cl E$.
Let
$${\rm int}_{\omega}(\kappa) = \cup_{\omega}\left\{R : \mbox{ a rectangle with } R\subseteq_{\omega} \kappa\right\}$$
be the \emph{$\omega$-interior} of the measurable subset $\kappa\subseteq X\times Y$; thus,
${\rm int}_{\omega}(\kappa)$ is the largest, with respect to marginal inclusion,
$\omega$-open set contained in $\kappa$.
The \emph{$\omega$-closure} of $\kappa$ is the set ${\rm cl}_{\omega}(\kappa) = ({\rm int}_{\omega}(\kappa^c))^c$;
thus, ${\rm cl}_{\omega}(\kappa)$ is the smallest, with respect to marginal inclusion,
$\omega$-closed set containing $\kappa$.
We say that an $\omega$-closed set $\kappa$
is \emph{generated by rectangles} if $\kappa = {\rm cl}_{\omega}({\rm int}_{\omega}(\kappa))$.
We write $\partial_{\omega}(\kappa) \stackrel{def}{=} \kappa \setminus {\rm int}_{\omega}(\kappa)$
and call $\partial_{\omega}(\kappa)$ the \emph{$\omega$-boundary} of $\kappa$.

The space $L^2(X\times Y)$ will be identified with
$\cl C_2(H_1,H_2)$ via the map sending an element $k\in L^2(X\times Y)$
to the integral operator $T_k$ given by $T_k \xi (y) = \int_{X} k(x,y)\xi(x) d\mu(x)$,
$\xi\in H_1$, $y\in Y$.
On the other hand, the space $\cl C_1(H_2,H_1)$ can be canonically identified
with the Banach space projective tensor product $H_1\hat\otimes H_2$ and hence with the space
${\cl T}(X,Y)$ of all functions $h : X\times Y\to{\mathbb C}$, defined up to a marginally null set,
that admit a representation
$$h(x,y) = \sum_{i=1}^{\infty} f_i(x)g_i(y), \ \ \ (x,y)\in X\times Y,$$
where $(f_i)_{i\in \bb{N}}\subseteq H_1$ and $(g_i)_{i\in \bb{N}}\subseteq H_2$
are square summable sequences.
We set ${\cl T}(X) = {\cl T}(X,X)$.
Note that the duality between $\cl B(H_1,H_2)$ and ${\cl T}(X,Y)$ is given by
$$\langle T,f\otimes g\rangle = (Tf,\bar g), \ \ T\in \cl B(H_1,H_2), \ \ \ f\in H_1, g\in H_2,$$
where we write $f\otimes g$ for the function defined by $(f\otimes g)(x,y) = f(x) g(y)$.

Recall that a measurable essentially bounded function $\nph : X\times Y\to \bb{C}$ is called a
\emph{Schur multiplier} if
$\nph h$ is equivalent, with respect to product measure, to a function from
${\cl T}(X,Y)$ for every $h\in {\cl T}(X,Y)$. We write $\frak{S}(X,Y)$ for the space of all Schur multipliers.
If $\nph\in \frak{S}(X,Y)$ then the map $m_{\nph} : h\mapsto \nph h$ on ${\cl T}(X,Y)$ is automatically bounded, and
its adjoint, acting on $\cl B(H_1,H_2)$, is denoted by $S_{\nph}$.
A subspace $\cl W\subseteq \cl B(H_1,H_2)$ will
be called a \emph{masa-bimodule} if $M_{b}TM_{a} \in \cl  W$
for all $T\in \cl W$, $a\in L^{\infty}(X,\mu)$ and $b\in L^{\infty}(Y,\nu)$.
Note that a weak* closed subspace $\cl W \subseteq \cl B(H_1,H_2)$ is a masa-bimodule if and only if
$S_{\nph}(\cl W)\subseteq \cl W$ for every $\nph \in \frak{S}(X,Y)$.

We say that a measurable subset $\kappa\subseteq X\times Y$
{\it supports} an operator $T\in \cl B(H_1,H_2)$
(or that $T$ {\it is supported by} $\kappa$)
if $P(\beta)TP(\alpha) = 0$ whenever the rectangle $\alpha\times\beta$ is
marginally disjoint from $\kappa$, and write
$${\mathfrak M}_{\max}(\kappa) = \left\{T\in \cl B(H_1,H_2) : T \mbox{ is supported by } \kappa\right\}.$$
For any subset ${\mathcal W}\subseteq \cl B(H_1,H_2)$, there exists a smallest
(up to marginal equivalence) $\omega$-closed set
$\supp(\cl W)$ which supports every operator $T\in{\mathcal W}$ \cite{eks}.

By \cite{arv} and \cite{st1}, for any $\omega$-closed set $\kappa$, there exists a
weak* closed bimodule ${\mathfrak M}_{\min}(\kappa)$ such that, given a weak* closed bimodule
$\mathfrak M\subseteq \cl B(H_1,H_2)$, we have that
$\text{supp }{\mathfrak M} \cong \kappa$ if and only if
$${\mathfrak M}_{\min}(\kappa)\subseteq {\mathfrak M}\subseteq {\mathfrak M}_{\max}(\kappa).$$
We say that $\kappa$ is a set of {\it operator synthesis} if
${\mathfrak M}_{\min}(\kappa) = {\mathfrak M}_{\max}(\kappa)$ \cite{arv} (see also \cite{st1,tod}).

Let
$$\Phi(\kappa) = \{h\in {\cl T}(X,Y): h\chi_{\kappa} = 0 \mbox{ marginally a.e.}\}$$
and
$$\Psi(\kappa) = \{h\in {\cl T}(X,Y): h \text{ vanishes on an $\omega$-open nbhd of }\kappa\}^{-\|\cdot\|_{{\cl T}(X,Y)}}.$$
By \cite[Theorems 4.3 and 4.4]{st1},
\begin{equation}\label{eq_minmax}
{\mathfrak M}_{\min}(\kappa) = \Phi(\kappa)^{\perp} \ \mbox{ and } \ {\mathfrak M}_{\max}(\kappa)=\Psi(\kappa)^\perp.
\end{equation}

\begin{definition}\rm
An $\omega$-closed subset $\kappa\subseteq X\times Y$ will be called a set of {\it compact operator synthesis} if
$${\mathfrak M}_{\min}(\kappa)\cap{\cl K} = {\mathfrak M}_{\max}(\kappa)\cap{\cl K}.$$
\end{definition}

\noindent {\bf Remarks. (i)}
Every set of operator synthesis is trivially a set of compact operator synthesis.

\smallskip

{\bf (ii)}
Recall \cite{gralmul} that an $\omega$-closed set $\kappa\subseteq X\times Y$ is called an \emph{operator $U$-set}
if ${\mathfrak M}_{\max}(\kappa)\cap{\cl K} = \{0\}$. It is clear that if $\kappa$ is an operator $U$-set then it is a set of compact operator synthesis.
In fact, the following more general result holds:

\begin{proposition}\label{p_bo}
Let $\kappa\subseteq X\times Y$ be an $\omega$-closed set.
If $\partial_{\omega}(\kappa)$ is an operator $U$-set then $\kappa$ satisfies compact operator synthesis.
\end{proposition}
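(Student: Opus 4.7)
The plan is to split $\kappa$ into an ``interior‑closure'' piece and an $\omega$‑boundary piece, handling each via a different ingredient. Concretely, set $\kappa_0 := {\rm cl}_{\omega}({\rm int}_{\omega}(\kappa))$. Then $\kappa_0$ is $\omega$‑closed with $\kappa_0 \subseteq_{\omega} \kappa$; moreover ${\rm int}_{\omega}(\kappa_0) \cong {\rm int}_{\omega}(\kappa)$, so $\kappa_0$ is generated by rectangles, and the marginal decomposition $\kappa \cong \kappa_0 \cup \partial_{\omega}(\kappa)$ holds. The overall strategy is to show that every $T \in \mathfrak{M}_{\max}(\kappa)\cap\cl{K}$ already lies in $\mathfrak{M}_{\max}(\kappa_0)$, and then upgrade this membership to $\mathfrak{M}_{\min}(\kappa)$ using operator synthesis of $\kappa_0$.

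Fix $T \in \mathfrak{M}_{\max}(\kappa)\cap\cl{K}$. The first step is to verify that $T \in \mathfrak{M}_{\max}(\kappa_0)$. Let $\alpha\times\beta$ be any rectangle with $(\alpha\times\beta)\cap\kappa_0$ marginally null. The operator $P(\beta)TP(\alpha)$ is compact, is trivially supported by $\alpha\times\beta$, and is also supported by $\kappa$ (since $T$ is), hence by $(\alpha\times\beta)\cap\kappa$. Because $(\alpha\times\beta)\cap{\rm int}_{\omega}(\kappa) \subseteq_{\omega} (\alpha\times\beta)\cap\kappa_0$ is marginally null, we deduce $(\alpha\times\beta)\cap\kappa \subseteq_{\omega} \kappa\setminus{\rm int}_{\omega}(\kappa) = \partial_{\omega}(\kappa)$. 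Thus $P(\beta)TP(\alpha)$ is a compact operator supported by $\partial_{\omega}(\kappa)$, and the operator $U$‑set hypothesis $\mathfrak{M}_{\max}(\partial_{\omega}(\kappa))\cap\cl{K} = \{0\}$ forces $P(\beta)TP(\alpha) = 0$. Varying $\alpha\times\beta$ yields $T \in \mathfrak{M}_{\max}(\kappa_0)$.

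For the second step I invoke the standard fact from the operator synthesis literature (see, e.g., \cite{arv, st1}) that any $\omega$‑closed set generated by rectangles is a set of operator synthesis; applied to $\kappa_0$, this gives $\mathfrak{M}_{\max}(\kappa_0) = \mathfrak{M}_{\min}(\kappa_0)$. Combining this with the inclusion $\mathfrak{M}_{\min}(\kappa_0) \subseteq \mathfrak{M}_{\min}(\kappa)$, which follows from $\Phi(\kappa) \subseteq \Phi(\kappa_0)$ (a consequence of $\kappa_0 \subseteq_{\omega} \kappa$) via (\ref{eq_minmax}), one obtains $T \in \mathfrak{M}_{\min}(\kappa)$, completing the argument.

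The main obstacle is the support bookkeeping in the first step: one must carefully verify both that the compression $P(\beta)TP(\alpha)$ inherits support from $T$ (so that it is supported by $(\alpha\times\beta)\cap\kappa$) and that this intersection is marginally contained in $\partial_{\omega}(\kappa)$. Once that is in place, the two hypotheses combine cleanly: the operator $U$‑set assumption does the work on ``the new part'' $\partial_{\omega}(\kappa)$, while operator synthesis of pseudo‑closed sets handles the ``regular part'' $\kappa_0$, and the trivial monotonicity $\mathfrak{M}_{\min}(\kappa_0) \subseteq \mathfrak{M}_{\min}(\kappa)$ bridges the two.
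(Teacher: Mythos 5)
Your Step~1 is correct, and its use of the $U$-set hypothesis to annihilate compressions of $T$ is close in spirit to the paper's opening move. The problem is Step~2: the claim that every $\omega$-closed set generated by rectangles satisfies operator synthesis is not a standard fact, and it is false. For a counterexample in the setting of this very paper, take a closed set $D\subseteq G$ with $D=\overline{{\rm int}(D)}$ that fails (local) spectral synthesis; such sets exist by the Colella-type construction used in Theorem~\ref{p_clint}. By \cite[Theorem 5.2]{llt} one has ${\rm int}_\omega(D^*)=({\rm int}\,D)^*$, and dually ${\rm cl}_\omega(U^*)=(\overline{U})^*$ for open $U$, whence $D^*={\rm cl}_\omega({\rm int}_\omega(D^*))$ is generated by rectangles; yet $D^*$ fails operator synthesis by the Froelich/Ludwig--Turowska transference. (If your claimed fact were true, the hypothesis ``set of compact operator synthesis'' in the converse half of Theorem~\ref{approx} would be redundant.) So you cannot upgrade $T\in\mathfrak M_{\max}(\kappa_0)$ to $T\in\mathfrak M_{\min}(\kappa_0)$, and the argument does not close.

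The ingredient you are missing is the characterisation of $\mathfrak M_{\min}(\kappa)$ from \cite{st2}, namely $\mathfrak M_{\min}(\kappa)=\{S: S_\theta(S)=0 \text{ for every Schur multiplier } \theta \text{ with } \theta\chi_\kappa=0 \text{ m.a.e.}\}$. The paper fixes an arbitrary Schur multiplier $\nph$ vanishing m.a.e.\ on $\kappa$, shows (essentially as in your Step~1, but with $S_\nph(T)$ in place of a compression $P(\beta)TP(\alpha)$) that $S_\nph(T)$ is a compact operator supported in $\kappa\cap{\rm cl}_\omega(\kappa^c)=\partial_\omega(\kappa)$, concludes $S_\nph(T)=0$ from the $U$-set hypothesis, and then invokes the above characterisation to obtain $T\in\mathfrak M_{\min}(\kappa)$ directly --- no synthesis property of $\kappa_0$ is required or asserted.
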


\begin{proof}
Let $T\in \mathfrak M_{\max}(\kappa)\cap {\cl K}$ and let $\nph : X\times Y \to \bb{C}$ be a Schur multiplier
that vanishes marginally almost everywhere on $\kappa$.
Then $S_\nph(T) \in \mathfrak M_{\max}(\kappa)\cap \cl K$.
Suppose that $\alpha\times\beta$ is a rectangle of finite product measure
with $\alpha\times\beta \subseteq_{\omega} \kappa$. Then, for every $\xi\in L^{\infty}(X)$ and $\eta\in L^{\infty}(Y)$, we have
$$\left\langle P(\beta) S_\nph(T) P(\alpha)\xi,\eta \right\rangle
= \left\langle T, \nph (\chi_{\alpha}\xi \otimes \chi_{\beta}\eta) \right\rangle = 0.$$
Thus, $S_\nph(T) \in \mathfrak M_{\max}({\rm cl}_{\omega}(\kappa^c))$.
It follows from the proof of \cite[Theorem 4.2]{eks} that
$S_{\nph}(T)$ is supported by the intersection of
$\kappa$ and ${\rm cl}_{\omega}(\kappa^c)$, that is, by
$\partial_{\omega}(\kappa)$.
Since $\partial_{\omega}(\kappa)$ is an operator  $U$-set, $S_\nph(T) = 0$.

By \cite{st2},
$$\mathfrak M_{\min}(\kappa) =
\{S : S_\theta(S)=0, \ \forall \ \theta\in \frak{S}(X,Y) \mbox{ such that } \theta\chi_\kappa = 0 \mbox{ m.a.e.}\}.$$
It follows that $T\in \mathfrak M_{\min}(\kappa)$ and hence $\kappa$ is a set of compact operator synthesis.
\end{proof}

We next show that compact operator synthesis is a concept of \lq\lq local'' nature.
If $\kappa\subseteq X\times Y$ is a measurable subset for which its characteristic function $\chi_{\kappa}$
is a Schur multipplier, we denote the idempotent $S_{\chi_{\kappa}}$ by $\pi_{\kappa}$.
A subset $Q\subseteq X\times Y$ will be called {\it elementary} if there exists a (finite)
family $\{\Pi_i\}_{i=1}^m$ of rectangles, such that
\begin{equation}\label{elSet}
Q = \cup_{i=1}^m \Pi_i.
\end{equation}
One may assume, after changing notation if necessary,
that the rectangles $\Pi_i$, appearing in (\ref{elSet}), are mutually disjoint.
It is clear that $\chi_Q$ is a Schur multiplier; in fact, writing
$\Pi_i = \alpha_i\times \beta_i$ for some measurable sets $\alpha_i\subseteq X$ and $\beta_i\subseteq Y$, $i = 1,\dots, m$,
we have that
$$\pi_Q(T) = \sum_{i=1}^m P(\beta_i) TP(\alpha_i), \ \ \ T\in \cl B(H_1,H_2).$$
Note that the range of $\pi_Q$ coincides with $\frak{M}_{\max}(Q)$.

\begin{lemma}\label{l_restrr}
Let $\kappa\subseteq X\times Y$ be an $\omega$-closed set,
$(\Pi_n)_{n\in \bb{N}}$ be a sequence of rectangles such that
$\Pi_n \subseteq_{\omega} \Pi_{n+1}$, $n\in \bb{N}$, and $\Pi = \cup_{n=1}^{\infty} \Pi_n$.
Suppose that $\kappa\cap\Pi_n$ is a set of compact
operator synthesis for each $n\in \bb{N}$. Then $\kappa\cap \Pi$ is a set of
compact operator synthesis.
\end{lemma}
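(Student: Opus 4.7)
The plan is to show that every $T\in\mathfrak{M}_{\max}(\kappa\cap\Pi)\cap\cl{K}$ lies in $\mathfrak{M}_{\min}(\kappa\cap\Pi)$ by approximating $T$ in operator norm by compact operators supported on the smaller sets $\kappa\cap\Pi_n$, where the hypothesis applies. The reverse inclusion $\mathfrak{M}_{\min}\subseteq\mathfrak{M}_{\max}$ is automatic, so this proves the lemma. First I would write $\Pi_n=\alpha_n\times\beta_n$ and, after absorbing marginally null sets, arrange that both $(\alpha_n)_{n\in\bb{N}}$ and $(\beta_n)_{n\in\bb{N}}$ are increasing; then $\Pi\cong\alpha\times\beta$ with $\alpha=\cup_{n=1}^{\infty}\alpha_n$ and $\beta=\cup_{n=1}^{\infty}\beta_n$, and $P(\alpha_n)\to P(\alpha)$, $P(\beta_n)\to P(\beta)$ in the strong operator topology.

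Setting $T_n=\pi_{\Pi_n}(T)=P(\beta_n)TP(\alpha_n)$, I would verify directly from the definition of support that $T_n$ is supported on $\kappa\cap\Pi_n$: given a rectangle $\alpha'\times\beta'$ marginally disjoint from $\kappa\cap\Pi_n$, the rectangle $(\alpha'\cap\alpha_n)\times(\beta'\cap\beta_n)$ is contained in $\Pi_n$ and marginally disjoint from $\kappa$, hence marginally disjoint from $\kappa\cap\Pi$, so
$$P(\beta')T_nP(\alpha')=P(\beta'\cap\beta_n)TP(\alpha'\cap\alpha_n)=0$$
by the supposed support of $T$. Since $T_n$ is also compact, the hypothesis on $\kappa\cap\Pi_n$ yields $T_n\in\mathfrak{M}_{\min}(\kappa\cap\Pi_n)$. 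The characterisation $\mathfrak{M}_{\min}(\cdot)=\Phi(\cdot)^{\perp}$ from (\ref{eq_minmax}) makes $\mathfrak{M}_{\min}$ monotone under marginal inclusion (smaller $\Phi$ gives larger annihilator), so $T_n\in\mathfrak{M}_{\min}(\kappa\cap\Pi)$ for every $n$.

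To conclude, $T$ being supported on $\kappa\cap\Pi\subseteq_{\omega}\alpha\times\beta$ gives $T=P(\beta)TP(\alpha)$, after which the telescoping identity $T-T_n=P(\beta)T(P(\alpha)-P(\alpha_n))+(P(\beta)-P(\beta_n))TP(\alpha_n)$ combined with the compactness of $T$ (which promotes the strong-operator convergence of $P(\alpha_n)$ and $P(\beta_n)$ to norm convergence when composed with $T$) forces $\|T_n-T\|\to 0$. As $\mathfrak{M}_{\min}(\kappa\cap\Pi)=\Phi(\kappa\cap\Pi)^{\perp}$ is weak*-closed, hence norm-closed, the limit $T$ belongs to it. The only point that requires care is the support identity for $T_n$, which is where the ascending chain condition $\Pi_n\subseteq_{\omega}\Pi_{n+1}$ is actually used; the upgrade from strong-operator convergence to norm convergence is then an immediate consequence of the compactness of $T$.
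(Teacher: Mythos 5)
Your proof is correct and follows the same route as the paper's: compress $T$ to $T_n = P(\beta_n)TP(\alpha_n)$, invoke the hypothesis on $\kappa\cap\Pi_n$, use the monotonicity of $\mathfrak{M}_{\min}$ given by (\ref{eq_minmax}), and pass to the norm limit using compactness. The only slight imprecision is the closing remark: the ascending chain condition $\Pi_n\subseteq_{\omega}\Pi_{n+1}$ is not what the support identity for $T_n$ relies on (that step only needs $\Pi_n\subseteq\Pi$); it is what makes the projections $P(\alpha_n)$, $P(\beta_n)$ increase, and hence what underwrites the strong (and, via compactness of $T$, norm) convergence $T_n\to T$.
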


\begin{proof}
Write
$\Pi_n = \alpha_n\times \beta_n$, for some measurable sets $\alpha_n\subseteq X$ and $\beta_n\subseteq Y$, $n\in \bb{N}$.
Setting $\alpha = \cup_{n=1}^{\infty}\alpha_n$ and $\beta = \cup_{n=1}^{\infty}\beta_n$, we have that $\Pi \cong \alpha\times \beta$.
Let $T\in \frak{M}_{\max}(\kappa\cap \Pi) \cap \cl K$. Then
$P(\beta_n)TP(\alpha_n)\in \frak{M}_{\max}(\kappa\cap \Pi_n) \cap \cl K$, and hence
$P(\beta_n)TP(\alpha_n) \in \frak{M}_{\min}(\kappa\cap \Pi_n)$, $n\in \bb{N}$.
Since $T = \lim_{n\to\infty} P(\beta_n)TP(\alpha_n)$ in norm, we conclude that $T\in \frak{M}_{\min}(\kappa)$.
\end{proof}

We will need the following reformulation of the \lq\lq principle of $\varepsilon$-compactness'' \cite[Lemma 3.4]{eks}.

\begin{lemma}\label{phi-zero}
Let $(X,\mu)$ and $(Y,\nu)$ be finite standard measure spaces,
$\kappa\subseteq X\times Y$ be an $\omega$-closed set, and $\gamma_n\subseteq X\times Y$ be $\omega$-open sets, $n\in \bb{N}$,
such that that $\kappa\subseteq \cup_{n=1}^{\infty} \gamma_n$.
For  each $\varepsilon > 0$, there exist $N\in \bb{N}$ and measurable subsets $\alpha \subseteq X$ and $\beta\subseteq Y$
with $\mu(\alpha) < \varepsilon$ and $\nu(\beta) < \varepsilon$ such that
\begin{equation}\label{eCom}
\kappa \subseteq  \left(\cup_{n=1}^{N} \gamma_n\right) \cup (\alpha\times Y) \cup (X\times \beta).
\end{equation}
\end{lemma}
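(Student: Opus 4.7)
The plan is to reduce this reformulation directly to the original principle of $\varepsilon$-compactness from \cite[Lemma 3.4]{eks}, whose statement is essentially the same conclusion but with the $\gamma_n$'s assumed to be rectangles. The only work is to pass from an arbitrary $\omega$-open cover to a countable rectangular cover.

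First I would use the definition of $\omega$-openness to write each $\gamma_n$, up to marginal equivalence, as a countable union of measurable rectangles: $\gamma_n \cong \cup_{k=1}^{\infty} R_{n,k}$. Then $\cup_{n=1}^{\infty}\gamma_n \cong \cup_{n,k} R_{n,k}$, and this double-indexed family can be re-enumerated as a single sequence $(R_m)_{m\in \bb{N}}$ of rectangles satisfying $\kappa \subseteq_{\omega} \cup_{m=1}^{\infty} R_m$, with each $R_m$ lying inside a well-defined $\gamma_{n(m)}$.

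Next I would apply \cite[Lemma 3.4]{eks} to the rectangular cover $(R_m)_{m\in \bb{N}}$ of $\kappa$: given $\varepsilon > 0$, it produces an index $M$ and measurable sets $\alpha \subseteq X$, $\beta \subseteq Y$ with $\mu(\alpha) < \varepsilon$ and $\nu(\beta) < \varepsilon$ such that
$$\kappa \subseteq \left(\cup_{m=1}^{M} R_m\right) \cup (\alpha\times Y) \cup (X\times\beta).$$
Setting $N = \max\{n(m) : 1\leq m\leq M\}$, one has $\cup_{m=1}^{M} R_m \subseteq_{\omega} \cup_{n=1}^{N}\gamma_n$, and substituting this into the inclusion above yields (\ref{eCom}).

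The only genuinely delicate point is the bookkeeping involved in managing the marginally null exceptional sets arising in the identifications $\gamma_n \cong \cup_k R_{n,k}$, since the equivalences are only \emph{marginal} rather than set-theoretic. However, since a countable union of marginally null sets is marginally null, the difference between $\cup_{n,k}R_{n,k}$ and $\cup_n\gamma_n$ is absorbed into the ambient marginal null set without affecting any of the inclusions above; no new estimates on $\mu(\alpha)$ or $\nu(\beta)$ are needed. Thus the lemma is essentially a direct corollary of \cite[Lemma 3.4]{eks}, rephrased in the language of $\omega$-open covers.
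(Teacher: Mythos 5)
Your proposal is correct and matches the paper's implicit approach: the paper itself presents Lemma~\ref{phi-zero} as a ``reformulation'' of \cite[Lemma 3.4]{eks} without giving a separate proof, and your reduction — decompose each $\gamma_n$ into a countable union of rectangles, re-enumerate, apply the rectangular version, and track which $\gamma_n$ each surviving rectangle came from — is exactly the expected derivation. Your observation that the marginally null discrepancies can be absorbed into the exceptional strips $\alpha\times Y$ and $X\times\beta$ (possibly after enlarging $\alpha$, $\beta$ by null sets) closes the only bookkeeping gap.
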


\begin{proposition}\label{p_local}
Let $\kappa\subseteq X\times Y$ be an $\omega$-closed set and $\cl E$ be a family of $\omega$-open sets
such that $\kappa\subseteq \cup_{\omega}\cl E$ and $\kappa\cap E$ is a set of compact operator synthesis
for every $E\in \cl E$. Then $\kappa$ is a set of compact operator synthesis.
\end{proposition}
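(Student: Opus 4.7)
The plan is to show that any $T \in \mathfrak M_{\max}(\kappa) \cap \cl K$ lies in the norm closure of $\mathfrak M_{\min}(\kappa)$, which equals $\mathfrak M_{\min}(\kappa)$. The approximant will be built via a \emph{finite partition-of-unity by rectangles} drawn from the cover $\cl E$; the error introduced by the small-measure strips that arise from Lemma \ref{phi-zero} will be absorbed using the compactness of $T$.

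Fix $T \in \mathfrak M_{\max}(\kappa) \cap \cl K$ and $\delta > 0$. First I would reduce to a finite ambient measure: since $T$ is compact and $(X,\mu), (Y,\nu)$ are $\sigma$-finite, pick $\alpha_0 \subseteq X, \beta_0 \subseteq Y$ of finite measure so that $T_0 := P(\beta_0) T P(\alpha_0)$ satisfies $\|T - T_0\| < \delta/3$; then $T_0$ is compact and supported by $\kappa \cap \Pi_0$, where $\Pi_0 = \alpha_0 \times \beta_0$. Next, using standardness (separability of the measure algebra of $\Pi_0$), extract a countable subfamily $\{E_i\}_{i \in \bb N}$ of $\cl E$ whose $\omega$-union inside $\Pi_0$ marginally contains $\kappa \cap \Pi_0$, decompose each $E_i$ as a countable union of rectangles, and form the resulting countable family $\cl R$ of rectangles (each $R \in \cl R$ is marginally contained in some $E_{i(R)}$). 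Choose $\varepsilon > 0$ small enough that $\mu(\alpha) < \varepsilon$ and $\nu(\beta) < \varepsilon$ both force $\|T P(\alpha)\|, \|P(\beta) T\| < \delta/6$, which is possible by compactness of $T$. Applying Lemma \ref{phi-zero} inside $\Pi_0$ to the cover $\cl R$ of $\kappa \cap \Pi_0$ yields finitely many $R_1, \ldots, R_M \in \cl R$ and sets $\alpha, \beta$ with $\mu(\alpha), \nu(\beta) < \varepsilon$ such that $\kappa \cap \Pi_0 \subseteq_\omega \left(\bigcup_k R_k\right) \cup (\alpha \times Y) \cup (X \times \beta)$.

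The next step is a Schur-multiplier decomposition. Set $Q = \bigcup_{k=1}^M R_k$. After refining the finite partitions of $X$ and $Y$ induced by the sides of the $R_k$, both $Q$ and $Q^c$ become finite disjoint unions of cells, so the Schur idempotents $\pi_Q$ and $\pi_{Q^c} = \id - \pi_Q$ are completely contractive. Writing $T_0 = \pi_Q(T_0) + \pi_{Q^c}(T_0)$, each summand in $\pi_Q(T_0) = \sum_k \pi_{R_k}(T_0)$ is a compact operator supported marginally in $R_k \cap \kappa \subseteq E_{i(R_k)} \cap \kappa$. The hypothesis that $\kappa \cap E_{i(R_k)}$ satisfies compact operator synthesis, together with the monotonicity $\mathfrak M_{\min}(\kappa \cap E_{i(R_k)}) \subseteq \mathfrak M_{\min}(\kappa)$ (from the dual inclusion $\Phi(\kappa) \subseteq \Phi(\kappa \cap E_{i(R_k)})$), places $\pi_Q(T_0)$ in $\mathfrak M_{\min}(\kappa)$. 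For the remainder, support analysis shows that $\pi_{Q^c}(T_0)$ is marginally supported in $(\alpha \times Y) \cup (X \times \beta)$, so $P(\beta^c) \pi_{Q^c}(T_0) P(\alpha^c) = 0$; this yields the two-term decomposition $\pi_{Q^c}(T_0) = \pi_{Q^c}(T_0) P(\alpha) + P(\beta) \pi_{Q^c}(T_0) P(\alpha^c)$ and hence $\|\pi_{Q^c}(T_0)\| \leq \|T_0 P(\alpha)\| + \|P(\beta) T_0\| < \delta/3$ by the choice of $\varepsilon$. The triangle inequality gives $\|T - \pi_Q(T_0)\| < \delta$, and since $\mathfrak M_{\min}(\kappa)$ is norm-closed, letting $\delta \to 0$ concludes.

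The main obstacle is that individual $E \in \cl E$ need not have Schur-multiplier characteristic function, so one cannot directly partition unity using the $E$'s; the remedy is to pass to honest rectangles inside each $E$ \emph{before} invoking the principle of $\varepsilon$-compactness. A secondary subtlety — and the reason compactness of $T$ is essential rather than cosmetic — is that operators supported in the small-measure strips $\alpha \times Y, X \times \beta$ are not norm-negligible in general; it is exactly the compactness of $T$ (via $\|T P(\alpha_n)\|, \|P(\beta_n) T\| \to 0$ as $\mu(\alpha_n), \nu(\beta_n) \to 0$) that makes the strip error negligible, explaining why the local-to-global principle is formulated at the level of compact operator synthesis rather than full operator synthesis.
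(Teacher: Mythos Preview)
Your overall architecture matches the paper's proof closely: pass to a finite-measure ambient, extract a countable subfamily, apply Lemma~\ref{phi-zero} to get a finite elementary cover $Q$ plus thin strips $\alpha\times Y$, $X\times\beta$, and use the hypothesis on each rectangle of $Q$ to land the ``good'' piece in $\frak{M}_{\min}(\kappa)$. There is, however, one genuine gap.

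You assert that after refining into the cells of the finite partitions, the Schur idempotents $\pi_Q$ and $\pi_{Q^c}$ become \emph{completely contractive}. That is false in general: refining only re-expresses $Q$ as a union of disjoint cells, it does not change the Schur multiplier $\chi_Q$ or its norm. The paper itself notes (just before Definition~\ref{d_qd}) that for the $2\times 2$ upper-triangular pattern $\|\pi_Q\| = 2/\sqrt{3} > 1$. Your estimate $\|\pi_{Q^c}(T_0)\|\le \|T_0 P(\alpha)\|+\|P(\beta)T_0\|$ uses this contractivity: from $\pi_{Q^c}(T_0)=\pi_{Q^c}(T_0 P(\alpha))+\pi_{Q^c}(P(\beta)T_0 P(\alpha^c))$ one only gets the bound $\|\pi_{Q^c}\|\bigl(\|T_0 P(\alpha)\|+\|P(\beta)T_0\|\bigr)$, and $\|\pi_{Q^c}\|$ depends on $Q$, hence on $\varepsilon$, so you cannot absorb it by choosing $\varepsilon$ earlier --- the choices are circular. (A secondary slip: $\pi_Q(T_0)=\sum_k \pi_{R_k}(T_0)$ is not correct when the $R_k$ overlap; you need the refined disjoint cells there, as the paper does.)

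The repair is exactly what the paper does and costs nothing: \emph{trim by the strips first} and never introduce $\pi_{Q^c}$. Set $T_\varepsilon = P(\beta_0\setminus\beta)\,T_0\,P(\alpha_0\setminus\alpha)$. Then $T_\varepsilon$ is supported in $\kappa\cap\Pi_0\cap(\alpha^c\times\beta^c)\subseteq Q$, so after refining $Q$ into mutually disjoint cells $\{C_\ell\}$ one has $T_\varepsilon=\sum_\ell \pi_{C_\ell}(T_\varepsilon)$, each summand lying in some $\frak{M}_{\min}(\kappa\cap E_{i(\ell)})\subseteq\frak{M}_{\min}(\kappa)$ by the hypothesis; hence $T_\varepsilon\in\frak{M}_{\min}(\kappa)$ exactly. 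The error $\|T_0-T_\varepsilon\|\le \|T_0 P(\alpha)\|+\|P(\beta)T_0\|$ is now controlled directly by compactness of $T$, with no Schur-multiplier norm intervening. With this change your proof goes through and coincides with the paper's.
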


\begin{proof}
Suppose first that the measures $\mu$ and $\nu$ are finite.
By \cite[Lemma 2.1]{stt}, there exists a countable set $\{E_k\}_{k\in \bb{N}}\subseteq \cl E$ such that
$\cup_{\omega}\cl E \cong \cup_{k\in \bb{N}} E_k$.
Fix $\epsilon > 0$.
By Lemma \ref{phi-zero}, there exist measurable subsets
$X_{\epsilon}\subseteq X$ and $Y_{\epsilon}\subseteq Y$ such that
$\mu(X\setminus X_{\epsilon}) < \epsilon$, $\nu(Y\setminus Y_{\epsilon}) < \epsilon$,
positive integers $N$ and $m_k$, $k\in \bb{N}$,
and measurable sets $\alpha_{k,i}\subseteq X$, $\beta_{k,i}\subseteq Y$, $i = 1,\dots,m_k$,
such that $\cup_{i=1}^{m_k} \alpha_{k,i}\times \beta_{k,i}\subseteq E_k$, $k= 1,\dots,N$, and
\begin{equation}\label{eq_inclus}
\kappa\cap (X_{\epsilon}\times Y_{\epsilon}) \subseteq \cup_{k=1}^N \cup_{i=1}^{m_k} \alpha_{k,i}\times \beta_{k,i}.
\end{equation}
Let $\{R_j\}_{j \in J}$ be a (finite) family of mutually disjoint rectangles
such that each $R_j$ is contained in some $\alpha_{k,i}\times \beta_{k,i}$,
and
\begin{equation}\label{eq_eqall}
\cup_{k=1}^N \cup_{i=1}^{m_k} \alpha_{k,i}\times \beta_{k,i} = \cup_{j\in J} R_j.
\end{equation}
Let $T\in \frak{M}_{\max}(\kappa)\cap \cl K$ and $T_{\epsilon} = P(Y_{\epsilon}) T P(X_{\epsilon})$.
By (\ref{eq_inclus}) and (\ref{eq_eqall}), $T_{\epsilon}\in \frak{M}_{\max}(\cup_{j\in J} R_j)\cap \cl K$.
It follows that
\begin{equation}\label{eq_sumJ}
T_{\epsilon} = \sum_{j\in J} \pi_{R_j}(T_{\epsilon}).
\end{equation}
On the other hand,
$\pi_{R_j}(T_{\epsilon})\in \frak{M}_{\max}(\kappa \cap R_j)\cap \cl K$, $j\in J$.
By assumption, for each $j\in J$, there exists $k\in \bb{N}$ such that $R_j\subseteq E_k$.
Since $\kappa\cap E_k$ is a set of compact operator synthesis, we conclude that
$\pi_{R_j}(T_{\epsilon})\in \frak{M}_{\min}(\kappa \cap E_k)$.
By (\ref{eq_minmax}), the correspondence $\lambda\to \frak{M}_{\min}(\lambda)$ is monotone, and hence
$$\pi_{R_j}(T_{\epsilon})\in \frak{M}_{\min}(\kappa), \ \ \ j\in J.$$
Equation (\ref{eq_sumJ}) implies that $T_{\epsilon} \in \frak{M}_{\min}(\kappa)$.
Taking a limit as $\epsilon \to 0$, we obtain that $T \in \frak{M}_{\min}(\kappa)$.

Now relax the assumption that $\mu$ and $\nu$ be finite, and write $X = \cup_{n=1}^{\infty} X_n$ and
$Y = \cup_{n=1}^{\infty} Y_n$ as increasing unions of sets of finite measure.
By the previous paragraph, $\kappa\cap (X_n\times Y_n)$ is a set of compact operator synthesis for each $n\in \bb{N}$.
By Lemma \ref{l_restrr}, $\kappa$ is a set of compact operator synthesis.
\end{proof}

\noindent {\bf Remark. } We note that a statement, analogous to Proposition \ref{p_local},
where compact operator synthesis is replaced by operator synthesis,
also holds true, and can be obtained after straightforward modifications of the proof given above.

\medskip

The problem of when the compact operators in a certain operator space can be approximated by finite rank
operators in the same space has attracted substantial interest in non-selfadjoint operator algebra theory
(see e.g. \cite{dav-book} and \cite{ivan-luda-un}), and motivates the next result,
which provides a characterisation of the sets generated by rectangles that satisfy compact operator synthesis.
Let $\frak{F}$ be the set of all finite rank operators in $\cl B(H_1,H_2)$ and set
$${\mathfrak M}_{\rm f}(\kappa) = \frak{M}_{\max}(\kappa)\cap \frak{F}.$$

\begin{theorem}\label{approx}
Let $\kappa\subseteq X\times Y$ be an $\omega$-closed set.
If $\frak{M}_{\max}(\kappa)\cap \cl K =  \overline{\frak{M}_{\max}(\kappa)\cap \cl C_2}^{\|\cdot\|}$ then
$\kappa$ is a set of compact operator synthesis.
Thus, if
\begin{equation}\label{eq_fmax}
\overline{{\mathfrak M}_{\rm f}(\kappa)}^{\|\cdot\|} =  {\mathfrak M}_{\max}(\kappa)\cap\cl K
\end{equation}
then $\kappa$ is a set of
compact operator  synthesis.

Conversely,
if $\kappa\subseteq X\times Y$ is a set of compact operator synthesis that is generated by rectangles
then (\ref{eq_fmax}) holds true.
\end{theorem}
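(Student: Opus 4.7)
The statement has three parts; Part 2 is immediate from Part 1 since $\mathfrak F \subseteq \cl C_2$, so that the hypothesis of Part 2 forces the hypothesis of Part 1. For Part 1 the plan is to reduce compact operator synthesis to the claim that any Hilbert--Schmidt operator in $\mathfrak M_{\max}(\kappa)$ already lies in $\mathfrak M_{\min}(\kappa)$. First I would verify that $T_k \in \cl C_2 \cap \mathfrak M_{\max}(\kappa)$ precisely when the kernel $k$ vanishes $(\mu \times \nu)$-a.e.\ outside $\kappa$; this follows by computing $P(\beta)T_kP(\alpha) = T_{\chi_{\alpha\times\beta}k}$ against rectangles $\alpha \times \beta \subseteq_{\omega} \kappa^c$, whose union marginally exhausts $\kappa^c$. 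Then, for $h \in \Phi(\kappa)$, marginal vanishing on $\kappa$ implies $(\mu\times\nu)$-vanishing on $\kappa$, and the Hilbert--Schmidt trace pairing gives $\langle T_k, h\rangle = \int kh\, d(\mu\times\nu) = 0$. Hence $\mathfrak M_{\max}(\kappa) \cap \cl C_2 \subseteq \mathfrak M_{\min}(\kappa)$ by \eqref{eq_minmax}, and since $\mathfrak M_{\min}(\kappa)$ is norm-closed, the Part 1 hypothesis immediately produces compact operator synthesis.

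For the converse, I would set $\cl N := \overline{\mathfrak M_{\rm f}(\kappa)}^{\|\cdot\|}$; this is a norm-closed masa-bimodule of compact operators contained in $\mathfrak M_{\max}(\kappa)\cap\cl K$, and the goal is to prove the reverse inclusion. Using that $\kappa$ is generated by rectangles, I fix a countable family $(R_n)_{n\in\bb N}$ of rectangles marginally contained in $\kappa$ with ${\rm int}_\omega(\kappa) \cong \cup_n R_n$. For each rectangle $R = \alpha\times\beta$, finite-rank operators are norm-dense in $\cl K \cap \mathfrak M_{\max}(R)$, which is canonically identified with the classical space $\cl K(L^2(\alpha),L^2(\beta))$; hence $\cl K \cap \mathfrak M_{\max}(R_n) \subseteq \cl N$ for every $n$. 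Let $\cl W$ denote the weak*-closure of $\cl N$ in $\cl B(H_1,H_2)$; it is a weak*-closed masa-bimodule whose support, being $\omega$-closed and marginally containing each $R_n$, also contains ${\rm cl}_\omega(\cup_n R_n) = \kappa$. Combined with $\cl W \subseteq \mathfrak M_{\max}(\kappa)$, this gives $\supp(\cl W) \cong \kappa$, so by the extremal property of $\mathfrak M_{\min}$ we deduce $\mathfrak M_{\min}(\kappa) \subseteq \cl W$.

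The last step is a weak*-duality argument to identify $\cl N$ with $\cl W \cap \cl K$. Every bounded linear functional on $\cl K$ is given by some $S \in \cl C_1$, which is automatically weak*-continuous on $\cl B(H_1,H_2)$; if $S$ annihilates $\cl N$ then it annihilates its weak*-closure $\cl W$, hence $\cl W \cap \cl K$. By Hahn--Banach it follows that $\cl N = \cl W \cap \cl K$, and invoking the compact operator synthesis assumption one obtains
\[
\cl N = \cl W \cap \cl K \supseteq \mathfrak M_{\min}(\kappa)\cap\cl K = \mathfrak M_{\max}(\kappa)\cap\cl K,
\]
which closes the proof. The main obstacle I anticipate is the support computation in the converse: the hypothesis that $\kappa$ is generated by rectangles is precisely what lets us recover all of $\kappa$ from the rectangles $R_n$ inside it via the $\omega$-closure operation, and without this assumption the weak*-closure $\cl W$ could have support strictly smaller than $\kappa$, blocking the inclusion $\mathfrak M_{\min}(\kappa)\subseteq \cl W$.
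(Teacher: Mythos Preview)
Your proposal is correct and follows essentially the same line as the paper. In Part~1 you argue directly that the kernel of a Hilbert--Schmidt operator in $\mathfrak{M}_{\max}(\kappa)$ vanishes a.e.\ off~$\kappa$ (the paper quotes \cite[Lemma~6.1]{eks} for this, after restricting to finite-measure boxes), and then both arguments pair against $\Phi(\kappa)$ via the integral formula for the trace. In the converse you compute $\supp(\overline{\mathfrak{M}_{\rm f}(\kappa)}^{w^*}) \cong \kappa$ from the ``generated by rectangles'' hypothesis and invoke minimality of $\mathfrak{M}_{\min}(\kappa)$, exactly as the paper does; your Hahn--Banach step identifying $\cl N$ with $\cl W\cap\cl K$ is the same as the paper's observation that the weak closure of $\mathfrak{M}_{\rm f}(\kappa)$ in $\cl K$ coincides with its norm closure, since both amount to the duality $\cl K^* = \cl C_1$.
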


\begin{proof}
Let $k\in L^2(X\times Y)$ be such that $T_k\in \frak{M}_{\max}(\kappa)$.
For all subsets $X_0\subseteq X$ and $Y_0\subseteq Y$ of finite measure,
we have that $P(Y_0) T_k P(X_0) \in \frak{M}_{\max}((X_0\times Y_0)\cap \kappa)$.
By \cite[Lemma~6.1]{eks}, $k|_{X_0\times Y_0}$ vanishes almost everywhere
on $(X_0\times Y_0) \setminus \kappa$;
since $X$ and $Y$ are $\sigma$-finite, we have that $k$ vanishes almost everywhere outside $\kappa$.
Hence
$$\langle T_k,h\rangle = \int k(x,y) h(x,y)d\mu(x)d\nu(y) = 0, \ \ \ h\in \Phi(\kappa).$$
By (\ref{eq_minmax}), $T_k \in {\mathfrak M}_{\min}(\kappa)$.
Thus, if $\frak{M}_{\max}(\kappa)\cap \cl K =  \overline{\frak{M}_{\max}(\kappa)\cap \cl C_2}^{\|\cdot\|}$ and
$T\in \frak{M}_{\max}(\kappa)\cap \cl K$ then $T\in {\mathfrak M}_{\min}(\kappa)$.

Suppose that
$\kappa$ is the $\omega$-closure of its $\omega$-interior.
Since the support of a rank one operator is a rectangle,
$\supp\left({\mathfrak M}_{\rm f}(\kappa)\right)$ contains any rectangle contained in $\kappa$.
Thus,
$$\kappa = {\rm cl}_{\omega}({\rm int}_{\omega}(\kappa)) \subseteq
\supp\left({\mathfrak M}_{\rm f}(\kappa)\right) \subseteq \kappa,$$
and hence we have equality throughout.
On the other hand, ${\mathfrak M}_{\rm f}(\kappa)\subseteq \frak{M}_{\min}(\kappa)$
(see \cite[Theorem 3.6]{gralmul}).
By the minimality property of $\mathfrak M_{\min}(\kappa)$, we have
that $\overline{\frak{M}_{\rm f}(\kappa)}^{w^*} = \mathfrak M_{\min}(\kappa)$.

Now assume in addition  that $\kappa\subseteq X\times Y$ is a set of compact operator synthesis, and
let $T\in \frak{M}_{\max}(\kappa) \cap \cl K$.
Then $T\in \frak{M}_{\min}(\kappa)$ and hence
$T$ lies in the weak closure of $\frak{M}_{\rm f}(\kappa)$, when the latter is considered as a subspace
of $\cl K$.
The statement follows from the fact that the weak and the norm closures of any subspace of a Banach space
are identical.
\end{proof}

We finish this subsection with an \lq\lq inverse image theorem'' for compact operator synthesis.
The analogous result for operator synthesis was established in \cite[Theorem 4.7]{st1}.
Let $(X,\mu)$, $(X_1,\mu_1)$, $(Y,\nu)$ and $(Y_1,\nu_1)$ be standard measure spaces
and $\nph : X\rightarrow X_1$ and $\psi : Y\rightarrow Y_1$ be measurable mappings such that
the measure $\nph_*\mu$ on $X_1$, given by $\nph_*\mu(\alpha_1) = \mu(\nph^{-1}(\alpha_1))$,
is absolutely continuous with respect to $\mu_1$ and, similarly, the measure $\psi_*\nu$ is absolutely
continuous with respect to $\nu_1$.
We say that $\nph$ is one-to-one up to a null set if there exists a set $\Lambda\subseteq X$
such that $\mu(\Lambda) = 0$ and $\nph : \Lambda^c\to X_1$ is one-to-one.
Let $r : X_1\rightarrow \bb{R}^+$ be the Radon-Nikodym derivative of $\nph_*\mu$
with respect to $\mu_1$, that is, $r$ is a $\mu_1$-measurable function such that
$$\mu(\nph^{-1}(\alpha_1)) = \int_{\alpha_1} r(x_1)d\mu_1(x_1),$$ for every measurable
set $\alpha_1\subseteq X_1$.
Similarly, let $s : Y_1\rightarrow \bb{R}^+$
be the Radon-Nikodym derivative of $\psi_*\nu$ with respect to $\nu_1$.
Let $M_1 = \{x_1\in X_1 : r(x_1) = 0\}$ (resp. $N_1  = \{y_1\in Y_1 : s(y_1) = 0\}$).
Note that
$$\nph_*\mu(M_1) = \int_{M_1} r(x_1)d\mu_1(x_1) = 0.$$
Similarly, $\psi_*\nu(N_1) = 0$.

Define an operator $V_{\nph} : L^2(X_1,\mu_1)\rightarrow L^2(X,\mu)$ by letting
$$V_{\nph}\xi (x) =
\begin{cases}
\frac{\xi(\nph(x))}{\sqrt{r(\nph(x))}} \;&\text{if }x\not\in \nph^{-1}(M_1)\\
0 &\text{if }x\in \nph^{-1}(M_1).
\end{cases}$$
As demonstrated in \cite[Lemma 5.4]{gralmul}, $V_{\nph}$ is a partial isometry with initial space $L^2(M_1^c,\mu_1|_{M_1^c})$.

\begin{theorem}\label{pr_invim}
Let $\kappa_1\subseteq X_1\times Y_1$ be an $\omega$-closed set and
$$\kappa = \{(x,y)\in X\times Y : (\nph(x),\psi(y))\in \kappa_1\}.$$
Assume that $X = \cup_{i=0}^N \tilde{X}_i$ and  $Y=\cup_{i=0}^M \tilde{Y}_i$ are partitions of $X$ and $Y$
into measurable sets, such that
\begin{itemize}
\item[(i)] $\nph|_{\tilde{X}_0}$ and $\psi|_{\tilde{Y}_0}$ are one-to-one, and
\item[(ii)] $\nph|_{\tilde{X}_i}$ and $\psi|_{\tilde{Y}_j}$ are constant almost everywhere for $i\geq 1$ and
$j \geq 1$.
\end{itemize}
If $\kappa_1$ is a set of compact operator synthesis then so is $\kappa$.
\end{theorem}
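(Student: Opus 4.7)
The plan is to reduce compact operator synthesis of $\kappa$ to that of $\kappa_1$ by localization. The finite family $\cl E = \{\tilde X_i\times\tilde Y_j : 0\leq i\leq N,\ 0\leq j\leq M\}$ consists of $\omega$-open rectangles whose union equals $X\times Y$; hence by Proposition \ref{p_local} it suffices to verify that, for every $(i,j)$, the set $\kappa_{i,j}:=\kappa\cap(\tilde X_i\times\tilde Y_j)$ is a set of compact operator synthesis.

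If at least one of $i,j$ exceeds $0$, hypothesis (ii) provides constant values $x_1^{(i)}$ of $\varphi$ on $\tilde X_i$ or $y_1^{(j)}$ of $\psi$ on $\tilde Y_j$, and a direct computation of the pullback shows that $\kappa_{i,j}$ is marginally equivalent to a measurable rectangle. For instance, when $i\geq 1$ and $j=0$,
\[
\kappa_{i,j} \cong \tilde X_i \times \bigl\{y\in\tilde Y_0 : (x_1^{(i)},\psi(y))\in\kappa_1\bigr\};
\]
the remaining sub-cases are handled symmetrically, or (when both $i,j\geq 1$) reduce to $\tilde X_i\times\tilde Y_j$ or $\emptyset$. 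Rectangles trivially satisfy operator synthesis, since their maximal and minimal masa-bimodules both coincide with the range of the idempotent $\pi_{\kappa_{i,j}}$; in particular they satisfy compact operator synthesis.

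The essential case is $(i,j)=(0,0)$, where $\varphi|_{\tilde X_0}$ and $\psi|_{\tilde Y_0}$ are one-to-one up to null sets. Here I would exploit the partial isometries $V_\varphi$ and $V_\psi$: restricted to the one-to-one parts they implement unitary identifications of $L^2(\varphi(\tilde X_0)\setminus M_1)$ with $L^2(\tilde X_0\setminus\varphi^{-1}(M_1))$, and of $L^2(\psi(\tilde Y_0)\setminus N_1)$ with $L^2(\tilde Y_0\setminus\psi^{-1}(N_1))$. Given $T\in\mathfrak M_{\max}(\kappa_{0,0})\cap\cl K$, set $T_1:=V_\psi^{\,*}TV_\varphi$. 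A routine verification, in the spirit of the arguments around Lemma 5.4 of \cite{gralmul}, shows that $T_1\in\mathfrak M_{\max}(\kappa_1)$, and $T_1$ is compact since $V_\varphi$ and $V_\psi$ are bounded. The hypothesis on $\kappa_1$ then gives $T_1\in\mathfrak M_{\min}(\kappa_1)$, and the remaining task is to transfer this minimality back to $T$.

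The main obstacle is precisely this last step. I would approach it through the Schur-multiplier description
\[
\mathfrak M_{\min}(\lambda) = \{S : S_\theta(S)=0\text{ for every }\theta\in\mathfrak S\text{ vanishing m.a.e.\ on }\lambda\}
\]
recalled from \cite{st2}: for any $\theta\in\mathfrak S(X,Y)$ vanishing m.a.e.\ on $\kappa_{0,0}$, one constructs a Schur multiplier $\tilde\theta\in\mathfrak S(X_1,Y_1)$, obtained by a measurable lift along $\varphi\times\psi$, which vanishes m.a.e.\ on $\kappa_1\cap(\varphi(\tilde X_0)\times\psi(\tilde Y_0))$ and has the property that $S_\theta(T)$ and $S_{\tilde\theta}(T_1)$ are intertwined by the pair $(V_\varphi,V_\psi)$. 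This is the device underlying the proof of \cite[Theorem 4.7]{st1} for plain operator synthesis; the only new ingredient needed is that compactness passes freely under $T\mapsto V_\psi^{\,*}TV_\varphi$ and its partial inverse, so the compact-operator version of the hypothesis on $\kappa_1$ is exactly what is required to conclude $T\in\mathfrak M_{\min}(\kappa_{0,0})$.
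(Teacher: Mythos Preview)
Your proposal is correct and broadly parallel to the paper's argument, but it organises the reduction differently and uses a different description of $\mathfrak{M}_{\min}$ in the injective case.

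For the pieces where one of the maps is constant, the paper does not invoke Proposition~\ref{p_local}. Instead it fixes a global $K\in\mathfrak{M}_{\max}(\kappa)\cap\cl K$ and an arbitrary $F\in\Phi(\kappa)$, and peels off the columns $K_i = KP(\tilde X_i)$ for $i\geq 1$ one at a time: each such $K_i$ is supported in the rectangle $\tilde X_i\times\hat Y_i\subseteq\kappa$, so $F\chi_{\tilde X_i\times\hat Y_i}=0$ marginally a.e.\ and $\langle K_i,F\rangle=0$ directly; then the $Y$-side is handled by passing to adjoints. Your use of Proposition~\ref{p_local} is a clean alternative: it localises immediately to the $\kappa_{i,j}$, which are rectangles and hence synthetic for trivial reasons. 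This is arguably more transparent, at the cost of invoking the localisation machinery.

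For the injective case $(i,j)=(0,0)$, the paper uses the $\kappa$-pair characterisation of $\mathfrak{M}_{\min}$ from \cite[Corollary~4.4]{st1} rather than the Schur-multiplier one. Having shown $K_1 = V_\psi^*KV_\varphi\in\mathfrak{M}_{\min}(\kappa_1)$, it takes an arbitrary $\kappa$-pair $(P,Q)$ and invokes from \cite{st1} and \cite[p.~1488]{gralmul} the existence of a dominating $\kappa_1$-pair $(\hat P_1,\hat Q_1)$ with $P(x)\leq\hat P_1(\varphi(x))$ and $Q(y)\leq\hat Q_1(\psi(y))$; a short computation with the factorisation $K = V_\psi K_1 V_\varphi^*$ then gives $Q(I\otimes K)P=0$. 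Your Schur-multiplier transfer should also work---pushing $\theta$ forward along the measurable inverses $(\varphi^{-1},\psi^{-1})$ preserves any representation $\theta=\sum a_k\otimes b_k$ and the associated $L^\infty$-bounds---but the $\kappa$-pair route sidesteps the need to check this and is already packaged in the cited references.
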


\begin{proof}
For a standard measure space $(Z,\sigma)$, we identify the spatial weak* tensor product
$\cl B(\ell^2)\bar\otimes L^\infty(Z,\sigma)$ with the algebra of all bounded functions
$F : Z\to \cl B(\ell^2)$ such that, for each $\xi\in \ell^2$, the functions $z\mapsto F(z)\xi$ and $z\mapsto F(z)^*\xi$ are measurable
\cite[Chapter IV, Section 7]{takesaki1}.
A pair $(P,Q)$, where $P\in \cl B(\ell^2)\bar\otimes L^\infty(X,\mu)$ and
$Q\in \cl B(\ell^2)\bar\otimes L^\infty(Y,\nu)$,
is called a $\kappa$-pair \cite{st1} if $P(x)Q(y) = 0$ marginally almost everywhere on $\kappa$.
One defines $\kappa_1$-pairs analogously.
Let $K \in \frak{M}_{\max}(\kappa) \cap \cl K$.

We first assume that
$\nph$ and $\psi$ are one-to-one up to a null set.
By \cite[Lemma 5.4]{gralmul}, $V_\nph$ and $V_\psi$ are surjective. Thus, if $K_1 = V_\psi^*KV_\nph$ then
$K = V_\psi K_1V_\nph^*$.
By the proof of \cite[Theorem 5.5]{gralmul},
the operator $K_1$ is compact and supported by $\kappa_1$.
Since $\kappa_1$ is a set of compact operator synthesis, we have $K_1\in {\mathfrak M}_{\min}(\kappa_1)$ and therefore,
by \cite[Corollary~4.4]{st1}, $Q_1(I\otimes K_1)P_1 = 0$ for any $\kappa_1$-pair $(P_1,Q_1)$.

Let $(P,Q)$ be a $\kappa$-pair.
By \cite{st1} and \cite[p. 1488]{gralmul},
there exists a $\kappa_1$-pair $(\hat{P}_1,\hat{Q}_1)$ such that
$P(x)\leq \hat{P}_1(\nph(x))$ (resp. $Q(y)\leq \hat{Q}_1(\psi(y))$) for almost all $x\in X$ (resp. $y\in Y$).
We have \begin{eqnarray*}
Q(I \otimes K)P
& = &
Q(I \otimes V_\psi K_1V_\nph^*)P =Q (\hat Q_1\circ\psi) (I \otimes V_\psi K_1V_\nph^*)(\hat P_1\circ\nph)P\\
& = &
Q(I\otimes V_\psi)\hat Q_1(I \otimes K_1)\hat P_1(I\otimes V_\nph^*)P.
\end{eqnarray*}
Since $(\hat P_1,\hat Q_1)$ is a $\kappa_1$-pair, we have that $\hat Q_1(I \otimes K_1)\hat P_1 = 0$. It follows that
$Q(I\otimes K)P = 0$
and therefore $K\in {\mathfrak M}_{\min}(\kappa)$.

For the general case,
let $F\in {\cl T}(X,Y)$ vanish marginally almost everywhere on $\kappa$ and set $K_i = KP(\tilde{X}_i)$, $i = 0,\dots,N$.
We may assume that there are distinct points $a_1,\dots,a_N \in X_1$ such that
$\varphi(x) = a_i$, $x\in \tilde{X}_i$, $i = 1,\dots,N$.
We have that
$\langle K,F\rangle = \sum_{i=0}^N \langle K_i, F\chi_{\tilde{X}_i\times Y}\rangle$.
Set
$\hat{Y}_i = \{y\in Y: (a_i,\psi(y))\in \kappa_1\}$, $i = 1,\dots,N$.
As $K$ is supported by $\kappa$, we have that
$K_i = M_{\chi_{\hat{Y}_i}}K_i$, $i = 1,\dots,N$, and
\begin{equation}\label{eq_Ki}
\left\langle K_i,F\right\rangle = \left\langle K_i,F \chi_{\tilde{X}_i \times \hat{Y}_i} \right\rangle.
\end{equation}
Since $\tilde{X}_i\times \hat{Y}_i\subseteq \kappa$, we have that
$F \chi_{\tilde{X}_i \times \hat{Y}_i} = 0$ marginally almost everywhere; by (\ref{eq_Ki}),
$$\langle K_i,F\rangle = 0, \ \ \ \ i = 1,\dots,N.$$

We may thus assume that $\tilde{X}_0 = X$.
Set $K_{0,j} = M_{\chi_{\tilde{Y}_j}}K$, $j = 0,\dots,M$.
By considering the operators $K^*$ and $K_{0,j}^*$, $j = 1,\dots,M$,
the previous paragraph implies that
$$\langle K_{0,j},F\rangle = 0, \ \ \ \ j = 1,\dots,M.$$
We thus may assume that $X = \tilde{X}_0$ and $Y = \tilde{Y}_0$.
The claim now follows from the first part of the proof.
\end{proof}

\subsection{Thin sets}\label{ss_ts}

In classical Harmonic Analysis, substantial attention has been devoted to the study of
various conditions of thinness of sets and their relation with spectral synthesis.
In this and the next subsections, we pursue a similar line of investigation in the operator setting,
identifying special classes of $\omega$-closed sets satisfying compact operator synthesis.

\begin{definition}\label{d_thin}
Let $\kappa\subseteq X\times Y$ be an $\omega$-closed set. We say that

(i) $\kappa$ is \emph{thin} if there is a decreasing sequence
$(Q_n)_{n=1}^{\infty}$ of elementary sets containing $\kappa$,
such that $\|\pi_{Q_n}(T)\| \to_{n\to\infty} 0$ for every compact operator $T$;

(ii) $\kappa$ \emph{has thin boundary} if $\kappa = \Gamma\cup E$, where $\Gamma$ is thin and $E$ is $\omega$-open.
\end{definition}

\begin{remark}\label{r_four}
{\rm
{\bf (i)} \ \
Let $\kappa$ be a thin set, and $(Q_n)_{n=1}^{\infty}$ be an associated sequence of elementary sets
as in Definition \ref{d_thin} (i). If $T\in \frak{M}_{\max}(\kappa)\cap \cl K$
then $\pi_{Q_n}(T) = T$ for each $n$, and hence $T = 0$. It follows that $\kappa$ is an
operator set of uniqueness, and therefore a set of compact operator synthesis.

\smallskip

{\bf (ii)} \
A subset of a thin set is thin. In particular, the intersection of finitely many thin sets is thin.
The same holds true for unions.
Indeed, suppose that a subset $\kappa\subseteq X\times Y$ is thin,
and let $(Q_n)_{n=1}^{\infty}$ be a decreasing  sequence
of elementary sets, containing $\kappa$, and such that $\|\pi_{Q_n}(T)\|\to_{n\to\infty} 0$ for every $T \in \cl K$.
As $\|\pi_{Q_n}|_{\cl K}\|=\|\pi_{Q_n}\|$, by the Banach-Steinhaus Theorem, $\sup_{n\in \bb{N}} \|\pi_{Q_n}\| < \infty$.
If $\kappa' \subseteq X\times Y$ is another thin set
and $(Q_n')_{n=1}^{\infty}$ is its corresponding sequence of elementary sets as in Definition \ref{d_thin} (i)
then, setting $R_n = Q_n\cup Q_n'$, $n\in \bb{N}$,
we have that
$R_{n+1}\subseteq R_n$,
$\kappa\cup \kappa' \subseteq R_n$, $n\in \bb{N}$,
and
$$\|\pi_{R_n}(T)\| = \|\pi_{Q_n}(T) + (\id-\pi_{Q_n})\pi_{Q_n'}(T)\|\to_{n\to\infty} 0$$
for every $T \in \cl K$.

\smallskip

{\bf (iii)} \
By (ii), the union (resp. intersection) of finitely many sets with thin boundary is a set with thin boundary.

\smallskip

{\bf (iv)}  An $\omega$-closed set $\kappa$ has thin boundary if and only if
$\partial_{\omega}(\kappa)$ is thin.
Indeed, suppose that
$\kappa = \Gamma\cup E$, where $\Gamma$ is thin and $E$ is $\omega$-open. Then $E\subseteq \kappa$
and hence $E\subseteq {\rm int}_{\omega}(\kappa)$. Thus, $\partial_{\omega}(\kappa) \subseteq \Gamma$
and so $\partial_{\omega}(\kappa)$ is thin.
}
\end{remark}

Since every thin set is an operator $U$-set, Theorem \ref{p_bo} implies that
any set with  thin boundary is a set of compact operator synthesis.
The following theorem gives a much  stronger statement.

\begin{theorem}\label{intersection}
Let $\tau\subseteq X\times Y$ be a set of compact operator synthesis and $\kappa$ be a set with thin boundary.
Then $\tau\cap \kappa$ and $\tau\cup \kappa$ are sets of compact operator synthesis.
\end{theorem}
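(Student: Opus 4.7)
My plan is to decompose $\kappa = \Gamma \cup E$ with $\Gamma = \partial_\omega(\kappa)$ thin (by Remark~\ref{r_four}(iv)) and $E = \text{int}_\omega(\kappa)$ $\omega$-open, and to combine the compact operator synthesis of $\tau$ with the thinness control over $\Gamma$. Let $(Q_n)$ denote the decreasing elementary sets that witness thinness of $\Gamma$, so $Q_n \supseteq \Gamma$ and $\|\pi_{Q_n}(S)\|\to 0$ for every $S\in\cl K$; fix also an increasing sequence of elementary sets $F_j \nearrow E$.

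The first key step is an auxiliary lemma: for every elementary set $R$, both $\tau \cap R$ and $\tau \cup R$ are sets of compact operator synthesis. For $\tau \cap R$, a given $T \in \mathfrak{M}_{\max}(\tau \cap R) \cap \cl K$ lies in $\mathfrak{M}_{\max}(\tau) \cap \cl K = \mathfrak{M}_{\min}(\tau) = \Phi(\tau)^\perp$ by compact operator synthesis of $\tau$; for $h \in \Phi(\tau \cap R)$ one rewrites $\langle T, h \rangle = \langle T, \chi_R h \rangle$ and checks that $\chi_R h$ vanishes on $R^c \cup (\tau \cap R) \supseteq \tau$, placing it in $\Phi(\tau)$ and forcing the pairing to vanish. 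For $\tau \cup R$, the decomposition $T = \pi_R(T) + \pi_{R^c}(T)$ places $\pi_R(T)$ in $\mathfrak{M}_{\max}(R) = \mathfrak{M}_{\min}(R)$ (elementary sets satisfy operator synthesis by the same test-function computation, since $\chi_R$ is a Schur multiplier) and $\pi_{R^c}(T)$, supported in $\tau \cap R^c \subseteq \tau$, in $\mathfrak{M}_{\min}(\tau)$; monotonicity places both in $\mathfrak{M}_{\min}(\tau \cup R)$.

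For the intersection, let $T \in \mathfrak{M}_{\max}(\tau \cap \kappa) \cap \cl K$ and write $T = \pi_{Q_n}(T) + \pi_{Q_n^c}(T)$; the first term vanishes in norm as $n\to\infty$. Since $\Gamma \subseteq Q_n$, the second is supported in $\tau \cap E \cap Q_n^c \subseteq \bigcup_j (F_j \cap Q_n^c)$. Each $F_j \cap Q_n^c$ is elementary, so by the auxiliary lemma
\[
\pi_{F_j \cap Q_n^c}(T) = \pi_{F_j}(\pi_{Q_n^c}(T)) \in \mathfrak{M}_{\min}(\tau \cap F_j \cap Q_n^c) \subseteq \mathfrak{M}_{\min}(\tau \cap \kappa),
\]
the last inclusion by monotonicity (since $F_j \cap Q_n^c \subseteq E \subseteq \kappa$). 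A standard $\varepsilon$-compactness reduction to the Hilbert--Schmidt case, where the kernel $k$ of $\pi_{Q_n^c}(T)$ satisfies $\chi_{F_j \cap Q_n^c} k \to \chi_{E \cap Q_n^c} k$ in $L^2$ by dominated convergence, shows $\pi_{F_j \cap Q_n^c}(T) \to \pi_{Q_n^c}(T)$ in norm. Norm-closure of $\mathfrak{M}_{\min}(\tau \cap \kappa)$ yields $\pi_{Q_n^c}(T) \in \mathfrak{M}_{\min}(\tau \cap \kappa)$, and letting $n \to \infty$ concludes the intersection case.

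The union case follows the same scheme: for $T \in \mathfrak{M}_{\max}(\tau \cup \kappa) \cap \cl K$, decompose further $\pi_{Q_n^c}(T) = \pi_{F_j \cap Q_n^c}(T) + \pi_{F_j^c \cap Q_n^c}(T)$. The first summand is supported in the elementary set $F_j \cap Q_n^c \subseteq \kappa$ and lies in $\mathfrak{M}_{\min}(\kappa) \subseteq \mathfrak{M}_{\min}(\tau \cup \kappa)$; the second converges in norm, as $j \to \infty$, to a compact operator whose support reduces to $\tau \cap Q_n^c \cap E^c \subseteq \tau$ (since its $\kappa$-portion $E \cap F_j^c \cap Q_n^c$ shrinks to empty), and this limit lies in $\mathfrak{M}_{\min}(\tau) \subseteq \mathfrak{M}_{\min}(\tau \cup \kappa)$ by compact operator synthesis of $\tau$. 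Summing and invoking norm-closure of $\mathfrak{M}_{\min}(\tau \cup \kappa)$ then letting $n \to \infty$ completes the proof. The principal technical obstacle throughout is justifying the norm-convergence of $\pi_{F_j}$ applied to a compact operator whose support lies in $\bigcup_j F_j$; I would handle this via the Hilbert--Schmidt reduction indicated above, which is the step that absorbs the bulk of the routine work.
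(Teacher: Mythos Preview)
Your overall architecture is close to the paper's, and your auxiliary lemma on $\tau\cap R$ and $\tau\cup R$ for elementary $R$ is correct (this is essentially the paper's invocation of \cite[Lemma 3.6]{gralmul}). The gap is in the convergence step you flag as ``routine''.

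You claim that for the compact operator $S=\pi_{Q_n^c}(T)$, supported in $E\cap Q_n^c=\cup_j(F_j\cap Q_n^c)$, one has $\pi_{F_j\cap Q_n^c}(S)\to S$ in norm, via a ``Hilbert--Schmidt reduction''. The dominated-convergence argument is indeed valid for a Hilbert--Schmidt operator, since its kernel lives in $L^2$. But transferring this to a general compact $S$ by approximating $S$ in norm by Hilbert--Schmidt operators $S_m$ requires a uniform bound on $\|\pi_{F_j}\|$: one needs $\|\pi_{F_j}(S)-\pi_{F_j}(S_m)\|\le \|\pi_{F_j}\|\,\|S-S_m\|$ to be small uniformly in $j$. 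There is no reason the Schur idempotents $\pi_{F_j}$ should be uniformly bounded for an arbitrary increasing exhaustion of an $\omega$-open set by elementary sets; in fact, the discussion in Subsection~\ref{ss_qds} (e.g.\ Proposition~\ref{p_unqd}) shows such norms can blow up. Moreover, the Hilbert--Schmidt approximants $S_m$ need not be supported in $E\cap Q_n^c$, so even the $L^2$-kernel argument for $S_m$ is not immediate. The same issue recurs in your union argument when you assert that $\pi_{F_j^c\cap Q_n^c}(T)$ converges in norm.

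The paper circumvents this entirely. After reducing to finite measures via Lemma~\ref{l_restrr}, it applies the $\varepsilon$-compactness principle (Lemma~\ref{phi-zero}) to the $\omega$-open set $\kappa\setminus D$ (for a fixed elementary $D\supseteq\Gamma$) to produce a \emph{single} elementary set $E_0\subseteq E\setminus D$ such that $\kappa\setminus D\subseteq E_0\cup(\alpha_\varepsilon\times Y)\cup(X\times\beta_\varepsilon)$ with $\mu(\alpha_\varepsilon),\nu(\beta_\varepsilon)<\varepsilon$. It then argues by duality: for any $F\in\cl T(X,Y)$ annihilating $\mathfrak{M}_{\min}(\tau\cap\kappa)\cap\cl K$, one shows $\langle T,F\rangle=\langle\pi_{D_\varepsilon}(T),F\rangle$ and lets $\varepsilon\to0$, then uses thinness of $\Gamma$ to let $\|\pi_D(T)\|\to 0$. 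This test-function approach together with Lemma~\ref{phi-zero} replaces your increasing sequence $F_j$ by one elementary set plus a controllable error, so no uniform Schur-norm bound is needed. If you rewrite your argument using Lemma~\ref{phi-zero} in this way (which is presumably what you intended by ``$\varepsilon$-compactness''), the proof goes through and becomes essentially the paper's.
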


\begin{proof}
By Lemma \ref{l_restrr}, it suffices to consider the case where the measures $\mu$ and $\nu$ are finite.
Write $\kappa = \Gamma\cup E$, where $\Gamma$ is thin and $E$ is $\omega$-open.
Let $D$ be an elementary set such that $\Gamma\subseteq D$.
The set $\kappa\setminus D$ coincides with $E\setminus D$ and is hence
$\omega$-open. By Lemma \ref{phi-zero},
there exist an elementary subset $E_0$ of $E\setminus D$ and subsets
$\alpha_{\epsilon} \subseteq X$ and $\beta_{\epsilon} \subseteq Y$
with $\mu(\alpha_{\epsilon}) < \varepsilon$ and $\nu(\beta_{\epsilon}) < \varepsilon$, such that
$$\kappa\setminus D \subseteq (\alpha_{\epsilon}\times Y) \cup (X\times \beta_{\epsilon}) \cup E_0.$$
Set $D_{\varepsilon} = D\cup (\alpha_{\epsilon}\times Y) \cup (X\times \beta_{\epsilon})$ and let
$T\in{\mathfrak M}_{\max}(\tau\cap \kappa) \cap \cl K$.  Then
$T\in {\mathfrak M}_{\max}(\tau)\cap\cl K$ and, since $\tau$ satisfies compact operator synthesis,
$T\in {\mathfrak M}_{\min}(\tau)$.
It follows from \cite[Lemma 3.6]{gralmul} that if $\Pi$ is a rectangle with $\Pi\subseteq \kappa$
then $\pi_{\Pi}(T)\in \frak{M}_{\min}(\tau\cap \Pi)$ and therefore
$\pi_{\Pi}(T)\in {\mathfrak M}_{\min}(\tau\cap \kappa)$. Thus,
\begin{equation}\label{int}
\pi_{\Pi}(T)\in {\mathfrak M}_{\min}(\tau\cap \kappa) \text {   for every elementary set } \Pi\subseteq \kappa.
\end{equation}
Set $Q = \kappa\cap D_{\varepsilon}^c$; the set $Q$ coincides with $E_0\cap D_{\varepsilon}^c$
and is hence elementary.
Since $T$ is supported by $\kappa$, we have
$$T = \pi_{D_{\varepsilon}}(T) +  \pi_{Q}(T).$$
Suppose that $F\in {\cl T}(X,Y)$  annihilates ${\mathfrak M}_{\min}(\tau\cap \kappa)\cap\cl K$.
By (\ref{int}),
$$\langle T,F\rangle = \langle\pi_{D_{\varepsilon}}(T),F\rangle+\langle\pi_{Q}(T),F\rangle = \langle\pi_{D_{\varepsilon}(T)},F\rangle.$$
Letting $\varepsilon\to 0$ we obtain $\langle T,F\rangle=\langle \pi_D(T),F\rangle$.
Since $\Gamma$ is thin, $D$ can be chosen so that $\|\pi_{D}(T)\|$ is arbitrarily small;
thus $\langle T,F\rangle = 0$ and hence $T\in \mathfrak M_{\min}(\tau\cap \kappa)$.

Let now $T\in{\mathfrak M}_{\max}(\tau\cup \kappa)\cap\cl K$
and suppose that $F\in \cl T(X,Y)$  annihilates
${\mathfrak M}_{\min}(\tau\cup \kappa)\cap\cl K$.
Set $S = T - \pi_{D_{\varepsilon}}(T)$.
We have
$$\langle T,F\rangle = \langle \pi_{D_{\varepsilon}}(T),F\rangle + \langle S,F\rangle.$$
The support of the operator $S$ is
a subset of $(\tau\cup \kappa)\cap D_{\epsilon}^c$; since
$\kappa\setminus D_{\epsilon}$ is contained in $E_0$, it is
contained in $\tau\cup E_0$.

For each rectangle $\Pi \subseteq E_0 $, we have
$$\pi_{\Pi}(S) \in \mathfrak M_{\max}(\Pi) \cap \cl K = \mathfrak M_{\min}(\Pi)  \cap \cl K
\subseteq \mathfrak M_{\min}(\kappa)  \cap \cl K,$$
whence $\langle \pi_{\Pi}(S),F\rangle = 0$.
Therefore
\begin{equation}\label{eq_e0s}
\langle \pi_{E_0}(S),F\rangle = 0.
\end{equation}
On the other hand,
$S-\pi_{E_0}(S)$ is supported by $\tau$; so
$$S - \pi_{E_0}(S) \in \mathfrak M_{\max}(\tau) \cap \cl K = \mathfrak M_{\min}(\tau)  \cap \cl K$$ and hence
$\langle (S- \pi_{E_0}(S)),F\rangle = 0$. By (\ref{eq_e0s}),
$\langle S,F\rangle = 0$, and so
$$\langle T,F\rangle = \langle \pi_{D_{\varepsilon}}(T),F\rangle.$$
Taking the limit as $\varepsilon\to 0$ we otain
$\langle T,F\rangle = \langle \pi_{D}(T),F\rangle,$
for each elementary set $D$ containing $\Gamma$. Since $\Gamma$ is thin, $\langle T,F\rangle =  0$.
\end{proof}

In connection with the study of synthetic properties of masa-bimodules,
the class of \emph{approximately $\frak{I}$-injective } masa-bimodules  was introduced in \cite{eletod}.
These are  weak* closed masa-bimodules $\cl U\subseteq \cl B(H_1,$ $H_2)$
for which  there exists a sequence
$(\kappa_n)_{n\in \bb{N}}$ of measurable subsets of $X\times Y$ such that
$\chi_{\kappa_n}$ is a Schur multiplier for each $n$,
the sequence $(\|\pi_{\kappa_n}\|)_{n\in \bb{N}}$ is bounded,
and $\cl U = \cap_{n\in \bb{N}} {\rm Ran} \pi_{\kappa_n}$.
Under these circumstances, the sets $\cap_{n=1}^{\infty}\kappa_n$ will be called
\emph{approximately $\frak{I}$-injective}.
It is straightforward to see that these sets are precisely the supports of approximately $\frak{I}$-injective masa-bimodules.
The connection between thin sets and approximately $\frak{I}$-injective masa-bimodules
is clarified in the next theorem.

\begin{theorem}\label{th_aithin}
Let $E\subseteq X\times Y$ be an $\omega$-closed set. The following are equivalent:

(i) \ $E$ is a thin set;

(ii) there exists an approximately $\frak{I}$-injective operator $U$-set $\kappa$ such that $E\subseteq \kappa$.
\end{theorem}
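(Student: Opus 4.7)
For the forward direction (i) $\Rightarrow$ (ii), given a thin set $E$ witnessed by a decreasing sequence of elementary sets $(Q_n)$ with $\|\pi_{Q_n}(T)\|\to 0$ for every $T\in\cl K$, I would take $\kappa:=\cap_n Q_n$ and set $\kappa_n:=Q_n$. Each $\chi_{Q_n}$ is a Schur multiplier since $Q_n$ is elementary, and the Banach--Steinhaus theorem applied to $\pi_{Q_n}\colon\cl K\to\cl K$ gives $\sup_n\|\pi_{Q_n}|_{\cl K}\|<\infty$; this coincides with $\sup_n\|\pi_{Q_n}\|$ since $\pi_{Q_n}$ is the dual of $m_{\chi_{Q_n}}$ acting on $\cl T(X,Y)\cong\cl C_1$. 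The associated bimodule is $\cap_n\mathrm{Ran}(\pi_{Q_n})=\cap_n\frak{M}_{\max}(Q_n)=\frak{M}_{\max}(\kappa)$, using the paper's identification $\mathrm{Ran}(\pi_Q)=\frak{M}_{\max}(Q)$ for elementary $Q$; hence $\kappa$ is approximately $\frak{I}$-injective, and $E\subseteq\kappa$ is immediate. That $\kappa$ is an operator $U$-set is Remark \ref{r_four}(i): any $T\in\frak{M}_{\max}(\kappa)\cap\cl K$ satisfies $\pi_{Q_n}(T)=T$ for all $n$ (since $\kappa\subseteq Q_n$ forces $T\in\mathrm{Ran}(\pi_{Q_n})$), so $T=\lim_n\pi_{Q_n}(T)=0$.

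For the backward direction (ii) $\Rightarrow$ (i), suppose $\kappa$ is approximately $\frak{I}$-injective with measurable witnesses $(\kappa_n)$ of uniformly bounded Schur norm, $\kappa=\cap_n\kappa_n$ marginally, and $\kappa$ is an operator $U$-set containing $E$. My plan is first to reduce to the case where each $\kappa_n$ is itself elementary, invoking an approximation of Schur idempotents by elementary ones with controlled norm in the spirit of \cite{eletod}, and then to set $Q_n:=\cap_{k\leq n}\kappa_k$, which is a decreasing family of elementary sets with $\cap_n Q_n\cong\kappa$ and hence contains $E$. To verify the decay of $\|\pi_{Q_n}(T)\|$, I would start with $T\in\cl C_2$: its kernel $k\in L^2(X\times Y)$ yields that $\pi_{Q_n}(T)$ has kernel $\chi_{Q_n}k$, which by dominated convergence tends in $L^2$ to $\chi_\kappa k$. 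The limit defines a Hilbert--Schmidt (hence compact) operator supported on $\kappa$, which vanishes by the operator $U$-set property; thus $\pi_{Q_n}(T)\to 0$ in Hilbert--Schmidt, and therefore in operator, norm. A standard $\varepsilon/3$ argument using density of finite-rank operators in $\cl K$ and a uniform bound on $\|\pi_{Q_n}\|$ then upgrades this decay to every compact $T$.

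The main obstacle is securing the uniform bound $\sup_n\|\pi_{Q_n}\|<\infty$: the naive estimate $\|\pi_{\cap_{k\leq n}\kappa_k}\|\leq\prod_{k\leq n}\|\pi_{\kappa_k}\|$ grows exponentially, and I do not see how to avoid this purely by cumulative intersection. Resolving it requires either a sharper structural result --- that the elementary witnesses for an approximately $\frak{I}$-injective set may be chosen so that their cumulative intersections remain uniformly bounded as Schur multipliers --- or a fundamentally different construction of the decreasing elementary family $(Q_n)$, for instance by refining the rectangle decompositions of the $\kappa_n$ in a way adapted to $E$ and to the operator $U$-set geometry of $\kappa$. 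Once such a sequence $(Q_n)$ is in hand, the Hilbert--Schmidt argument together with Banach--Steinhaus delivers the conclusion, and the forward direction confirms that this uniform boundedness is automatic for any successful choice.
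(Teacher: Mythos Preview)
Your forward direction (i)$\Rightarrow$(ii) is correct and coincides with the paper's argument.

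For (ii)$\Rightarrow$(i) you have correctly located the genuine obstacle --- the uniform bound on $\|\pi_{Q_n}\|$ --- and you are right that cumulative intersections $Q_n=\bigcap_{k\le n}\kappa_k$ cannot deliver it. What you are missing is the specific construction that circumvents this. The paper does \emph{not} try to replace each $\kappa_n$ by an elementary set globally. Instead it exhausts $X$ and $Y$ by finite-measure sets $X_n,Y_n$ and uses the $\varepsilon$-compactness principle (Lemma~\ref{phi-zero}): since $\kappa_n$ is both $\omega$-open and $\omega$-closed, one can find small error sets $\alpha_n\subseteq X_n$, $\beta_n\subseteq Y_n$ (with $\mu(\alpha_n),\nu(\beta_n)<2^{-n-2}$) such that $\kappa_n\cap(\alpha_n^c\times\beta_n^c)\cap(X_n\times Y_n)$ is elementary. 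Setting $A_n=\bigcup_{k\ge n}\alpha_k$, $B_n=\bigcup_{k\ge n}\beta_k$, $R_n=(A_n\times Y)\cup(X\times B_n)$, one defines
\[
Q_n=\bigl(\kappa_n\cap R_n^c\cap(X_n\times Y_n)\bigr)\cup R_n\cup(X_n\times Y_n)^c.
\]
This is elementary, decreasing, and $\bigcap_n Q_n\cong\kappa$. The decisive point is that on the rectangle $R_n^c\cap(X_n\times Y_n)$ the set $Q_n$ \emph{agrees with $\kappa_n$ itself}, so $\pi_{Q_n}=\pi_{\kappa_n}\pi_{R_n^c\cap(X_n\times Y_n)}+(\mathrm{id}-\pi_{R_n^c\cap(X_n\times Y_n)})$, yielding $\|\pi_{Q_n}\|\le\|\pi_{\kappa_n}\|+2$. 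The norm control thus comes from factoring through the \emph{original} Schur idempotent $\pi_{\kappa_n}$, not from any elementary approximation of it.

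Once this is in hand, your Hilbert--Schmidt plus $\varepsilon/3$ endgame would work; the paper instead quotes a convergence lemma from \cite{eletod} to conclude that $(\pi_{Q_n}(T))_n$ is norm-Cauchy for every compact $T$, with limit in $\frak{M}_{\max}(\kappa)\cap\cl K=\{0\}$. Either route closes the argument, but only after the construction above supplies the uniform bound you were unable to secure.
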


\begin{proof}
(i)$\Rightarrow$(ii) Suppose that $(Q_n)_{n\in \bb{N}}$ is a decreasing sequence of elementary sets
such that $E\subseteq \cap_{n=1}^{\infty} Q_n$ and $\pi_{Q_n}(T)\to 0$ for every $T\in \cl K$.
By the remarks before Lemma \ref{l_restrr}, $\chi_{Q_n}$ is a Schur multuplier.
The Uniform Boundedness Principle implies that the sequence
$(\pi_{Q_n})_{n\in \bb{N}}$ is bounded in norm. Thus, $\kappa = \cap_{n=1}^{\infty} Q_n$
is an approximately $\frak{I}$-injective set containing $E$.
If $T\in \frak{M}_{\max}(\kappa)\cap \cl K$ then
$\pi_{Q_n}(T) = T$ for every $n\in \bb{N}$, and hence $T = 0$. Thus, $\kappa$ is an operator $U$-set.

(ii)$\Rightarrow$(i)
Write $X=\cup_{n=1}^\infty X_n$ and $Y=\cup_{n=1}^\infty Y_n$,
where $(X_n)_{n\in \mathbb N}$ and $(Y_n)_{n\in \mathbb N}$ are increasing sequences of sets of finite measure.
Let $(\kappa_n)_{n\in \bb{N}}$ be a decreasing sequence
such that $\chi_{\kappa_n}$ is a Schur multiplier for each $n$, the sequence
$(\pi_{\kappa_n})_{n\in \bb{N}}$ is bounded in norm, and $\kappa = \cap_{n=1}^{\infty}\kappa_n$.
It follows easily from \cite[Theorem 6.5]{eks} that the pre-image of any open set under a Schur multiplier is $\omega$-open.
Thus, the set $\kappa_n$ is both $\omega$-closed and $\omega$-open.
By Lemma \ref{phi-zero}, there exist measurable sets $\alpha_n\subseteq X_n$ and $\beta_n\subseteq Y_n$
such that $\mu(\alpha_n) < \frac{1}{2^{n+2}}$, $\nu(\beta_n) < \frac{1}{2^{n+2}}$ and
the set
$\kappa_n \cap (\alpha_n^c\times \beta_n^c) \cap (X_n \times Y_n)$ is elementary.
Write $A_n = \cup_{k\geq n}\alpha_k$ and $B_n = \cup_{k\geq n}\beta_k$, $n\in \bb{N}$.
Set
$R_n = (A_n\times Y) \cup (X\times B_n);$
clearly, $R_n$, and thus $R_n^c$, is elementary.
Let
$$Q_n = (\kappa_n \cap R_n^c\cap (X_n\times Y_n)) \cup R_n \cup(X_n\times Y_n)^c, \ \ \ n\in \bb{N}.$$
Since
$$\kappa_n \cap R_n^c\cap (X_n\times Y_n) = \kappa_n \cap (\alpha_n^c \times \beta_n^c)\cap (X_n\times Y_n)\cap R_{n+1}^c,$$
we have that $Q_n$ is elementary.

We claim that
\begin{equation}\label{eq_inq}
Q_{n+1}\subseteq Q_n, \ \ \ n\in \bb{N}.
\end{equation}
To see this, note that $R_{n+1}\subseteq R_n$ and $(X_{n+1}\times Y_{n+1})^c\subseteq (X_{n}\times Y_{n})^c$.
Suppose that
$$(x,y)\in \kappa_{n+1} \cap R_{n+1}^c\cap (X_n\times Y_n).$$
If $(x,y)\in (\alpha_n\times Y_n) \cup (X_n\times \beta_n)$ then $(x,y)\in R_n$ and hence $(x,y)\in Q_n$.
If, on the other hand, $(x,y) \not\in (\alpha_n\times Y_n) \cup (X_n\times \beta_n)$ then, since
$(x,y)\in R_{n+1}^c$, we have that $(x,y)\in R_n^c$. Since $\kappa_{n+1}\subseteq \kappa_n$, we conclude that
$(x,y)\in \kappa_n\cap R_n^c\cap (X_n\times Y_n)$ and so $(x,y)\in Q_n$.
Hence
$$Q_{n+1}\cap (X_n\times Y_n)\subseteq Q_n.$$
On the other hand, $Q_{n+1}\cap (X_n\times Y_n)^c\subseteq (X_n\times Y_n)^c\subseteq Q_n$.
Inclusion (\ref{eq_inq}) follows.

We claim that
\begin{equation}\label{eq_inqn}
\cap_{n=1}^{\infty} Q_n \cong \kappa.
\end{equation}
First note that, by construction, $\kappa\subseteq Q_n$ for each $n$.
Set $M = \cap_{n=1}^\infty A_n$ and $N=\cap_{n=1}^\infty B_n$;
observe that
$\mu(M) = \lim_{n\to\infty}\mu(A_n) = 0$ and, similarly, $\nu(N) = 0$.
Since
$$\cap_{n=1}^\infty R_n = (M\times Y) \cup (X\times N),$$
we have that
$\cap_{n=1}^{\infty} R_n$ is marginally null.

Suppose that $(x,y)\in \left(\cap_{n=1}^{\infty} Q_n\right) \cap \left(M^c\times N^c\right)$. Then $(x,y)$ belongs to
infinitely many elements of the sequence
$$\left(\kappa_n\cap R_n^c\cap (X_n\times Y_n)\right)_{n\in \bb{N}}.$$
Indeed, otherwise it belongs to infinitely many
elements of the decreasing sequence
$$\left(R_n\cup (X_n\times Y_n)^c\right)_{n\in \bb{N}},$$
that is,
$$(x,y)\in \cap_{n=1}^\infty (R_n\cup (X_n\times Y_n)^c).$$
Since $\cap_{n=1}^\infty (X_n\times Y_n)^c = \emptyset$, this implies that
$(x,y)\in \cap_{n=1}^\infty R_n = (M\times Y) \cup (X\times N)$, a contradiction.
It follows that $(x,y)\in \cap_{n=1}^{\infty} \kappa_n$, that is, $(x,y)\in \kappa$, establishing (\ref{eq_inqn}).

Note that $R_n^c \cap (X_n\times Y_n) = (A_n^c\cap X_n)\times (B_n^c\cap Y_n)$, and hence
$\|\pi_{R_n^c \cap (X_n\times Y_n)}\|\leq 1$.
Thus,
\begin{eqnarray*}
\|\pi_{Q_n}\|
& \leq &
\|\pi_{\kappa_n\cap R_n^c \cap (X_n\times Y_n)}\| + \|\pi_{R_n\cup (X_n\times Y_n)^c}\|\\
& = &
\|\pi_{\kappa_n} \pi_{R_n^c \cap (X_n\times Y_n)}\| + \|\pi_{R_n\cup (X_n\times Y_n)^c}\|\\
& \leq &
\|\pi_{\kappa_n}\| \|\pi_{R_n^c \cap (X_n\times Y_n)}\| + \|\id - \pi_{R_n^c \cap (X_n\times Y_n)}\|
\leq
\|\pi_{\kappa_n}\| + 2.
\end{eqnarray*}
Let $T\in \cl K$. By \cite[Lemma 5.1 (ii)]{eletod}, the sequence
$(\pi_{Q_n}(T))_{n\in \bb{N}}$ converges in the operator norm.
As $\kappa\simeq\cap_{n=1}^\infty Q_n$,
its limit $S$ is  an element of $\frak{M}_{\max}(\kappa)\cap \cl K$.
Since $\kappa$ is an operator $U$-set, $S = 0$.
It follows that $\kappa$, and hence $E$, is a thin set.
\end{proof}

\subsection{Quasi-diagonal sets}\label{ss_qds}

In this subsection, we consider a special class of thin sets, which are defined in terms of limiting combinatorial
behaviour of covering elementary sets.
Let $Q$ be an elementary set. We call a (finite) family $D = \{\alpha_i\times\beta_i\}_{i=1}^n$
of rectangles a \emph{decomposition} of $Q$ if its elements are mutually disjoint and
$Q \cong \cup_{i=1}^n \alpha_i\times \beta_i$.
We let $N(D)$ be the smallest positive integer $N$ such that
$${\rm card}\{i : x\in \alpha_i\} \leq N \mbox{ and } {\rm card}\{i : y\in \beta_i\} \leq N,$$
for almost all $x\in X$ and almost all $y\in Y$
(here ${\rm card} J$ denotes the number of elements of a finite set $J$).
It is easy to see that
$$N(D) = \max\left\{\left\|\sum\chi_{\alpha_i}\right\|_\infty, \left\|\sum\chi_{\beta_i}\right\|_\infty\right\}.$$
We set
$$|Q| = \min_D N(D),$$
where the infimum is taken over all decompositions $D$ of $Q$.
For each decomposition
$D = \{\alpha_i\times\beta_i\}_{i=1}^n$ of $Q$,
we have
$\pi_Q(T) = \sum_{i=1}^n P_{\beta_i}TP_{\alpha_i}$; therefore,
\begin{eqnarray*}
\|\pi_Q\|
& \le &
\left\|\sum P_{\beta_i}P_{\beta_i}^{\ast}\right\|^{1/2} \left\|\sum P_{\alpha_i}^{\ast}P_{\alpha_i}\right\|^{1/2}\\
&=& \left\|\sum\chi_{\alpha_i}\right\|_\infty^{1/2} \left\|\sum\chi_{\beta_i}\right\|_{\infty}^{1/2}
\leq N(D).
\end{eqnarray*}
Taking the minimum over all decompositions $D$, we obtain
\begin{equation}\label{eq_inmd}
\|\pi_Q\| \le |Q|.
\end{equation}
Note that, by \cite{livshits}, the norm of the Schur multiplication by the matrix
$\left(\smallmatrix 1 & 1\\ 0 & 1\endsmallmatrix\right)$ is $\frac{2}{\sqrt{3}}$, showing that the inequality in
(\ref{eq_inmd}) may be strict.

For a rectangle $\Pi = \alpha\times \beta$, write $r(\Pi) = \min\{\mu(\alpha),\nu(\beta)\}$.
If $Q \subseteq X\times Y$ is an elementary set, define
$$r(Q) = \sup\left\{r(\Pi): \Pi\subseteq Q\right\};$$
for an arbitrary measurable subset $E \subseteq X\times Y$, let
$$r(E) = \inf \left\{r(Q): E \subseteq Q, \ Q \text{  elementary}\right\}.$$
Note that the map   $E\mapsto r(E)$ is monotone with respect to inclusion of elementary sets and therefore the
quantity $r(E)$ is well-defined.

\begin{definition}\label{d_qd}
A measurable subset $\kappa\subseteq X\times Y$ will be called \emph{quasi-diagonal}
(resp. \emph{strongly quasi-diagonal}) if
there exists a decreasing sequence $(Q_n)_{n\in \bb{N}}$ of elementary sets such that
\begin{itemize}
\item[(i)]
$\sup_{n\in \bb{N}} |Q_n| < \infty$,
\item[(ii)] $r(Q_n) \to_{n\to\infty} 0$, and
\item[(iii)] $\kappa \subseteq_{\omega} \cap_{n=1}^{\infty} Q_n$ (resp. $\kappa \cong\cap_{n=1}^{\infty} Q_n$).
\end{itemize}
\end{definition}

Special classes of
quasi-diagonal sets were considered in \cite[Section 6]{gralmul}
in connection with operator $U$-sets.

\begin{theorem}\label{newquasi}
Every quasi-diagonal set is thin.
\end{theorem}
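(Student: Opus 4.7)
The plan is to show that the very sequence $(Q_n)_{n\in\mathbb N}$ witnessing quasi-diagonality of $\kappa$ also witnesses thinness. By condition (i) of Definition \ref{d_qd} together with inequality (\ref{eq_inmd}), the operators $\pi_{Q_n}$ are uniformly bounded in norm by $N := \sup_n |Q_n|$. Combined with the density of finite rank operators in $\cl K$, it suffices to verify that $\|\pi_{Q_n}(T)\| \to 0$ for every rank-one operator $T$; the result for arbitrary compact $T$ then follows by a standard $\epsilon/3$ argument using uniform boundedness.

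Fix a rank-one operator $T = \eta\otimes\xi$, so that $T\zeta = (\zeta,\xi)\eta$ for $\zeta\in H_1$. For each $n$, choose a decomposition $\{\alpha_i\times\beta_i\}_i$ of $Q_n$ realising $N(D)\leq N$ (suppressing the index $n$ in the notation). Since each rectangle $\alpha_i\times\beta_i$ lies in $Q_n$, one has $\min(\mu(\alpha_i),\nu(\beta_i))\leq r(Q_n)$, so I partition the index set as $I_n\sqcup J_n$, where $i\in I_n$ iff $\mu(\alpha_i)\leq r(Q_n)$ and otherwise $\nu(\beta_i)\leq r(Q_n)$.

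The key pointwise calculation is
$$\|\pi_{Q_n}(T)\zeta\|^2 = \int_Y |\eta(y)|^2\Bigl|\sum_i \chi_{\beta_i}(y)(\zeta,P(\alpha_i)\xi)\Bigr|^2 d\nu(y).$$
Since, for almost every $y$, at most $N$ of the sets $\beta_i$ contain $y$, applying Cauchy--Schwarz pointwise in $y$ and Cauchy--Schwarz on $(\zeta,P(\alpha_i)\xi)$ yields
$$\|\pi_{Q_n}(T)\zeta\|^2 \leq N\sum_i \|P(\beta_i)\eta\|^2\,\|P(\alpha_i)\zeta\|^2\,\|P(\alpha_i)\xi\|^2.$$
I would then split this sum over $I_n$ and $J_n$. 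On $I_n$, the factor $\|P(\alpha_i)\xi\|^2$ can be made smaller than any prescribed $\epsilon$ uniformly in $i$, since $\mu(\alpha_i)\leq r(Q_n)\to 0$ and the measure $|\xi|^2 d\mu$ is absolutely continuous with respect to $\mu$. The remaining factor $\sum_i \|P(\beta_i)\eta\|^2\,\|P(\alpha_i)\zeta\|^2$ is bounded above by $N\|\eta\|^2\|\zeta\|^2$ via Cauchy--Schwarz together with the elementary inequality
$$\sum_i \|P(\beta_i)\eta\|^4 \leq \Bigl(\sup_i \|P(\beta_i)\eta\|^2\Bigr)\sum_i \|P(\beta_i)\eta\|^2 \leq N\|\eta\|^4,$$
and analogously for the $\alpha_i$. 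A symmetric argument handles $J_n$, this time using absolute continuity of $|\eta|^2 d\nu$.

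Combining the two estimates gives $\|\pi_{Q_n}(T)\|^2 \leq N^2\epsilon(\|\xi\|^2+\|\eta\|^2)$ for all $n$ sufficiently large, which proves convergence to zero on rank-one operators and hence, by the opening reduction, on all of $\cl K$. The main technical obstacle is organising the bookkeeping of the estimate cleanly: the bound $N(D)\leq N$ must be used twice (once pointwise on $Y$ and once as an $L^\infty$ bound on $\sum\chi_{\alpha_i}$ or $\sum\chi_{\beta_i}$), and one must confirm that the smallness of $\mu(\alpha_i)$ for $i\in I_n$ (resp.\ $\nu(\beta_i)$ for $i\in J_n$) is genuinely uniform in $i$ for each $n$, which it is because $r(Q_n)$ bounds the relevant measure for every $i$ in the corresponding half of the partition.
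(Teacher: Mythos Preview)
Your proof is correct and takes a genuinely different route from the paper's. Both arguments use the uniform bound $\|\pi_{Q_n}\|\le |Q_n|\le N$ to reduce to rank-one operators, but the key estimate diverges from there. The paper bounds $\|\pi_Q(T)\|$ by the Hilbert--Schmidt norm $\|\pi_Q(T)\|_2$, which forces a preliminary reduction to finite measures $\mu,\nu$ and to rank-one operators $T=vu^*$ with $u\in L^\infty(X)$, $v\in L^\infty(Y)$; it then obtains the explicit quantitative bound
\[
\|\pi_Q(T)\|\le \|u\|_\infty\|v\|_\infty\, r(Q)^{1/2}\,(\mu(X)+\nu(Y))^{1/2}\,|Q|^{1/2}.
\]
You instead work directly with the operator norm for an arbitrary rank-one $T=\eta\otimes\xi$ with $\xi\in L^2(X)$, $\eta\in L^2(Y)$, split the decomposition according to which side of each rectangle has measure $\le r(Q_n)$, and replace the factor $r(Q_n)^{1/2}$ by the uniform ($\epsilon$--$\delta$) absolute continuity of the finite measures $|\xi|^2\,d\mu$ and $|\eta|^2\,d\nu$. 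Your approach is softer (no explicit rate in $r(Q_n)$) but avoids both reductions: neither finiteness of $\mu,\nu$ nor boundedness of $\xi,\eta$ is needed. One cosmetic point: your final inequality $\|\pi_{Q_n}(T)\|^2\le N^2\epsilon(\|\xi\|^2+\|\eta\|^2)$ is not homogeneous under $\xi\mapsto\lambda\xi$, $\eta\mapsto\lambda^{-1}\eta$, but this is harmless since $\xi$ and $\eta$ are fixed throughout.
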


\begin{proof}
Let $T$ be a rank one operator of the form $vu^*$, where $u$ (resp. $v$) is a bounded measurable
function on $X$ (resp. $Y$), and $vu^*$ is given by $vu^*(\xi) = (\xi,u)v$, $\xi\in H_1$.
Suppose that $D = \{\alpha_i\times \beta_i\}_{i=1}^m$ is a decomposition of
an elementary set $Q$.
Then
\begin{eqnarray*}
& & \|\pi_Q(T)\|
 \leq
\|\pi_Q(T)\|_2
= \left\|\sum_{i=1}^m(\chi_{\alpha_i}\otimes\chi_{\beta_i})(u\otimes \overline{v})\right\|_2\\
& \leq & \|u\|_\infty\|v\|_\infty\left(\sum_{i=1}^m \mu(\alpha_i)\nu(\beta_i)\right)^{1/2}\\
& \leq & \|u\|_\infty\|v\|_\infty\left(\sum_{i=1}^m r(\alpha_i\times \beta_i)(\mu(\alpha_i) + \nu(\beta_i))\right)^{1/2}\\
& \leq & \|u\|_\infty\|v\|_\infty r(Q)^{1/2} \left(\sum_{i=1}^m(\mu(\alpha_j)+\nu(\beta_j))\right)^{1/2}\\
& \leq & \|u\|_\infty\|v\|_\infty r(Q)^{1/2}
\left(\left\|\sum_{i=1}^m \chi_{\alpha_i}\right\|_\infty \mu(X) + \left\|\sum_{i=1}^m \chi_{\beta_i}\right\|_\infty\nu(Y)\right)^{1/2}\\
& \leq & \|u\|_\infty\|v\|_\infty r(Q)^{1/2}  N(D)^{1/2} \left(\mu(X) + \nu(Y)\right)^{1/2}.
\end{eqnarray*}
It follows that
\begin{equation}\label{eq_estq}
\|\pi_Q(T)\|\leq \|u\|_\infty\|v\|_\infty r(Q)^{1/2} \left(\mu(X) + \nu(Y)\right)^{1/2} |Q|^{1/2}.
\end{equation}
Now let $\kappa$ be a quasi-diagonal set and
$(Q_n)_{n\in \bb{N}}$ be a decreasing sequence of elementary subsets of $X\times Y$ such that
$\sup_{n\in \bb{N}} |Q_n| < \infty$, $r(Q_n) \to_{n\to\infty} 0$ and $\kappa \subseteq_{\omega} \cap_{n=1}^{\infty} Q_n$.
In order to show that $\kappa$ is thin, it suffices to see that
\begin{equation}\label{eq_tozero}
\pi_{Q_n}(T)\to_{n\to \infty} 0, \ \ \ T\in \cl K.
\end{equation}
By (\ref{eq_inmd}),
$\sup_{n\in \bb{N}} \|\pi_{Q_n}\| < \infty$.
It thus suffices to assume
that the measures $\mu$ and $\nu$ are finite and
$T = vu^*$, where $u$ (resp. $v$) is a bounded measurable
function on $X$ (resp. $Y$).
The convergence (\ref{eq_tozero}) is now immediate from (\ref{eq_estq}).
\end{proof}

Recall \cite{st1,tod} that a subset $\kappa\subseteq X\times Y$ is called a set
\emph{of finite width} if there exist measurable functions
$f_j : X\to\mathbb R$ and $g_j : Y\to\mathbb R$, $j=1,\ldots,n$, such that
\begin{equation}\label{width}
\kappa = \{(x,y)\in X\times Y: f_j(x)\leq g_j(y), j=1,\ldots, n\};
\end{equation}
the smallest $n$ as in (\ref{width}) is called the \emph{width} of $\kappa$.
It was proved in \cite{st1} and \cite{tod}
that every set of finite width is a set of operator synthesis and hence of compact operator synthesis.
We will see shortly that the class of sets of compact operator
synthesis is stable with respect to forming the union and the
intersection with sets of finite width.

\begin{proposition}\label{thinwidth}
If $\kappa\subseteq X\times Y$ is a set of finite width then $\kappa$ has thin boundary.
\end{proposition}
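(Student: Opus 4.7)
The strategy is a reduction to $n=1$ followed by exhibiting an explicit thin set containing the $\omega$-boundary. By Remark~\ref{r_four}(iii) and (iv), the class of sets with thin boundary is closed under finite intersections; writing $\kappa=\bigcap_{j=1}^{n}\kappa_j$ with $\kappa_j=\{(x,y):f_j(x)\le g_j(y)\}$, it suffices to handle $\kappa=\{f\le g\}$ and to produce a thin set containing $\partial_\omega(\kappa)$.

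To locate $\partial_\omega(\kappa)$ inside a manageable set, I would first observe that $\{f<g\}=\bigcup_{q\in\bb{Q}}\{f\le q\}\times\{g>q\}$ is a countable union of rectangles contained in $\kappa$, hence an $\omega$-open subset of $\text{int}_\omega(\kappa)$. Next, let $T_f=\{t\in\bb{R}:\mu(f^{-1}(t))>0\}$ and $T_g=\{t\in\bb{R}:\nu(g^{-1}(t))>0\}$; these are countable by $\sigma$-finiteness of $\mu$ and $\nu$, and for every $t\in T_f\cap T_g$ the rectangle $f^{-1}(t)\times g^{-1}(t)$ lies in $\kappa$ since $f=t=g$ on it. Their union $D^\flat$ is thus $\omega$-open and contained in $\text{int}_\omega(\kappa)$. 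Consequently $\partial_\omega(\kappa)\subseteq D^\sharp:=\{f=g\}\setminus D^\flat$, and I would split $D^\sharp=D_1^\sharp\cup D_2^\sharp$ where
$$D_1^\sharp=\{(x,y)\in A_f\times Y:f(x)=g(y)\},\quad A_f=\{x\in X:\mu(f^{-1}(f(x)))=0\},$$
and $D_2^\sharp$ is defined symmetrically. By Remark~\ref{r_four}(ii), proving that $D_1^\sharp$ is thin (and, by symmetry, $D_2^\sharp$) would finish the proof.

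The essential feature of $D_1^\sharp$ is that the pushforward $(\mu|_{A_f})\circ f^{-1}$ on $\bb{R}$ is atomless by construction; a compactness argument then shows that on any compact interval the $\mu$-measure of $A_f\cap f^{-1}(I)$ tends to $0$ uniformly over subintervals $I$ of length shrinking to $0$. Exhausting $X=\bigcup_m Z_m$, $Y=\bigcup_m W_m$ by sets of finite measure on which $|f|\le M_m$, $|g|\le M_m$ with $M_m\nearrow\infty$, and letting $I_k^{(n)}=[k/2^n,(k+1)/2^n)$, I would set
\begin{align*}
Q_n = \big(\{|f|>M_n\}\times Y\big)&\cup\big(X\times\{|g|>M_n\}\big)\\
&\cup\bigcup_{|k|\le M_n 2^n}\big(A_f\cap f^{-1}(I_k^{(n)})\big)\times g^{-1}(I_k^{(n)}).
\end{align*}
Dyadic refinement and $M_{n+1}\ge M_n$ make $(Q_n)$ elementary and decreasing, and a case analysis on whether $|f(x)|$ or $|g(y)|$ exceeds $M_n$ shows $D_1^\sharp\subseteq Q_n$. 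Inequality (\ref{eq_inmd}) and a short calculation yield $\sup_n\|\pi_{Q_n}\|<\infty$. For fixed $m$ and $n\ge m$ the first two rectangles of $Q_n$ do not meet $Z_m\times W_m$, so $Q_n\cap(Z_m\times W_m)$ is a finite disjoint rectangular cover with $|Q_n\cap(Z_m\times W_m)|\le 1$ and, by the atomless property, $r(Q_n\cap(Z_m\times W_m))\to 0$. The estimate (\ref{eq_estq}) from the proof of Theorem~\ref{newquasi} then gives $\|\pi_{Q_n}(P(W_m)TP(Z_m))\|\to 0$ for every compact $T$. Combining this with the uniform norm bound and the norm-convergence $P(W_m)TP(Z_m)\to T$ via a standard $\varepsilon/2$ diagonal argument produces $\|\pi_{Q_n}(T)\|\to 0$, which is the definition of thinness.

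The main obstacle is the technical coordination between the partition scale $2^{-n}$ and the truncation level $M_n$ in the presence of unbounded $f,g$ and merely $\sigma$-finite measures: thinness requires each $Q_n$ to be a finite union of rectangles, but the natural dyadic decomposition of $\bb{R}$ is countable, so the "escape" rectangles at $\{|f|>M_n\}$ and $\{|g|>M_n\}$ together with the coupling of $M_n$ to $n$ have to be set up carefully in order to keep $(Q_n)$ decreasing globally while still allowing the quasi-diagonal estimate of Theorem~\ref{newquasi} to run locally on each $Z_m\times W_m$.
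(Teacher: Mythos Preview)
Your argument is correct and follows essentially the same route as the paper: reduce to width one via Remark~\ref{r_four}(iii), peel off the $\omega$-open strict-inequality part $\{f<g\}$, then split the diagonal $\{f=g\}$ according to whether the pushforward of $f$ has an atom at the common value, and show the ``atomless'' piece is thin by exhibiting it as (contained in) a quasi-diagonal set.

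Two minor remarks on economy. First, the paper gets away with a single thin piece: taking $X_0=f^{-1}(T_f)$, one has $\{f=g\}\cap(X_0\times Y)=\bigcup_{t\in T_f}f^{-1}(t)\times g^{-1}(t)$, which is already $\omega$-open (the summands with $t\notin T_g$ are marginally null), so $\partial_\omega(\kappa)\subseteq D_1^\sharp$ without ever introducing $D_2^\sharp$. Your symmetric split is not wrong, just redundant. Second, your explicit construction of $(Q_n)$ is precisely a self-contained proof that $D_1^\sharp$ is quasi-diagonal; the paper instead cites \cite[Lemma~6.2]{gralmul} for this and then invokes Theorem~\ref{newquasi}. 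Your version trades one external citation for the bookkeeping you flag in the final paragraph, but the content is the same.
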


\begin{proof}
By Remark \ref{r_four} (iv), it suffices to prove that,
if $f : X\to \bb{R}$ and $g : Y\to \bb{R}$ are measurable functions then the set
$\kappa = \{(x,y): f(x) \le g(y)\}$ has thin boundary.
The set
$\Omega \stackrel{def}{=} \{(x,y): f(x) < g(y)\}$ is $\omega$-open, because
$$\Omega = \cup_{r\in \mathbb{Q}} \{x: f(x) < r\} \times\{x: g(x) > r\}.$$
It thus suffices to prove that the set
$\kappa_1 \stackrel{def}{=} \{(x,y): f(x) = g(y)\}$
has thin boundary.
Let
$$V = \left\{a\in \mathbb{R}: \mu\left(\{x : f(x) = a\}\right) > 0\right\}.$$
Since $\mu$ is $\sigma$-finite, $V$ is countable.
Let $X_0 = \cup_{a\in V}\{x : f(x) = a\}$. Then $\kappa_1 \cap (X_0\times Y)$ is $\omega$-open whence
$\kappa_1 \cap \left((X\setminus X_0)\times Y\right)$ is thin by \cite[Lemma 6.2]{gralmul}
and Theorem \ref{newquasi}.
\end{proof}

It was shown in \cite[Corollary 4.2]{eletod} that
the union of a set of finite width and a set of operator synthesis satisfies operator synthesis.
By \cite[Theorem~4.9]{st1}, such a statement
does not hold for intersections instead of unions.
The next corollary, immediate  from Proposition \ref{thinwidth} and
Theorem \ref{intersection}, answers the analogous questions for compact operator synthesis.

\begin{corollary}\label{c_fw}
The intersection (resp. the union) of a set of finite width and a set of compact operator synthesis
is a set of compact operator synthesis.
\end{corollary}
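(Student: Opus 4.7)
The plan is essentially to chain the two results cited. Let $\tau \subseteq X \times Y$ be a set of compact operator synthesis and let $\kappa \subseteq X \times Y$ be a set of finite width. First, I would invoke Proposition \ref{thinwidth} to conclude that $\kappa$ has thin boundary, i.e. $\kappa = \Gamma \cup E$ with $\Gamma$ thin and $E$ an $\omega$-open set (equivalently, by Remark \ref{r_four}(iv), that $\partial_{\omega}(\kappa)$ is thin).

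Second, I would apply Theorem \ref{intersection} directly: since $\tau$ is a set of compact operator synthesis and $\kappa$ is a set with thin boundary, both $\tau \cap \kappa$ and $\tau \cup \kappa$ are sets of compact operator synthesis. This is exactly the conclusion required.

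There is no real obstacle here, as the corollary is a formal consequence of the two results. The only thing worth checking is that the hypotheses of Proposition \ref{thinwidth} and Theorem \ref{intersection} are satisfied in full generality (namely without any finiteness assumption on $(X,\mu)$ and $(Y,\nu)$), but both statements are proved in the generality of standard measure spaces, so no reduction step is required. The proof can therefore be given in two lines.
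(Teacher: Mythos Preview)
Your proposal is correct and matches the paper's own reasoning exactly: the paper states that Corollary~\ref{c_fw} is immediate from Proposition~\ref{thinwidth} and Theorem~\ref{intersection}, which is precisely the two-step chain you describe.
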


We next note that there exist $U$-sets that are not strongly quasi-diagonal. For a locally compact group $G$ and a closed subset $E\subseteq G$ let
$$E^*=\{(s,t):ts^{-1}\in E\}.$$

\begin{proposition}\label{p_unqd}
Let $E = \left\{\frac{1}{n}: n\in\mathbb N\right\} \cup\{0\}$. Then
the subset $E^*$ of $\bb{R}\times\bb{R}$ is a set of operator uniqueness with respect to the Lebesgue measure
which is not strongly quasi-diagonal.
\end{proposition}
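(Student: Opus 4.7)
The two assertions are logically independent, and I address them in turn.

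\emph{Operator uniqueness.} The strategy is transference: it suffices to show that $E$ itself is a classical $U$-set in $\bb R$, after which the transference theorem for $U$-sets of \cite{gralmul} delivers the operator-uniqueness of $E^*$. To verify that $E$ is a $U$-set, fix $F\in C_r^*(\bb R)$ with $\supp_{\vn}(F)\subseteq E$. For each $n\geq 1$, the point $1/n$ is isolated in $E$, so by the regularity of $A(\bb R)$ one may choose $\psi_n\in A(\bb R)\cap C_c(\bb R)$ equal to $1$ on a neighbourhood of $1/n$ and with $\supp\psi_n\cap(E\setminus\{1/n\})=\emptyset$. The element $\psi_n\cdot F\in C_r^*(\bb R)$ then has distributional support contained in the single point $\{1/n\}$, and hence equals a finite linear combination of derivatives of $\delta_{1/n}$; its Fourier transform is a polynomial multiplied by $e^{-i\xi/n}$, which lies in $C_0(\widehat{\bb R})$ only if all coefficients vanish. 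Thus $\psi_n\cdot F=0$ for each $n$, so $\supp_{\vn}(F)\subseteq\{0\}$, and a final iteration at the origin gives $F=0$.

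\emph{Not strongly quasi-diagonal.} Argue by contradiction: assume a sequence $(Q_n)$ of elementary sets exists with $|Q_n|\leq N$, $r(Q_n)\to 0$, and $E^*\cong\bigcap_n Q_n$ marginally. Fix decompositions $D_n=\{\alpha_i^{(n)}\times\beta_i^{(n)}\}_i$ of $Q_n$ realising $N(D_n)=|Q_n|$, and localise to the box $[0,1]\times[0,2]$ via Lemma \ref{l_restrr}. For each $s\in[0,1]$ and each $n$, the $N+1$ points $(s,s),\ (s,s+1),\ (s,s+\tfrac{1}{2}),\ldots,(s,s+\tfrac{1}{N})$ all lie in $E^*\subseteq Q_n$ in rectangles of $D_n$ whose $\alpha$-side contains $s$; since there are at most $N$ such rectangles, two of the points must share one. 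A double pigeonhole over the $\binom{N+1}{2}$ possible pairs of colliding levels, over $s\in[0,1]$, and then over $n\in\bb{N}$ (via subsequences), extracts a fixed pair $(e,e')\in E\times E$ with $e\ne e'$, a positive-measure set $A\subseteq[0,1]$, and an increasing sequence $n_k\to\infty$, such that for every $s\in A$ and every $k$, the points $(s,s+e)$ and $(s,s+e')$ lie in a common rectangle $\alpha^{(k)}_s\times\beta^{(k)}_s$ of $D_{n_k}$, forcing $s\in\alpha^{(k)}_s$ and $\{s+e,\ s+e'\}\subseteq\beta^{(k)}_s$.

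The final step — the technical heart of the argument — is to extract from this family a single positive-measure rectangle $\alpha_\infty\times\beta_\infty\subseteq\bigcap_n Q_n$, after which the contradiction follows: $\alpha_\infty\times\beta_\infty$ must contain pairs $(s,t)$ with $t-s\notin E$, so $\bigcap_n Q_n\setminus E^*$ has positive product measure and therefore is not marginally null, contradicting $\bigcap_n Q_n\cong E^*$. The $\alpha^{(k)}_s$'s may individually shrink as $k\to\infty$ because $r(Q_{n_k})\to 0$; the rescue is that the uniform bound $|Q_{n_k}|\leq N$ forces the collection of "triggered" $\alpha$-sides to remain an $N$-fold cover of $A$, so a Fatou--Borel--Cantelli argument applied to the indicator functions $\chi_{\alpha^{(k)}_s}$ should stabilise a positive-measure subset $A'\subseteq A$ lying inside $\alpha^{(k)}_s$ for infinitely many $k$, with $\beta_\infty\supseteq (A'+e)\cup(A'+e')$. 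Making this stabilisation rigorous, in the face of $r(Q_{n_k})\to 0$, is the main obstacle and will require delicate interplay between the multiplicity bound $N$ and the measure-theoretic structure of the triggered rectangles.
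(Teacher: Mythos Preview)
Your treatment of the operator-uniqueness half is correct and matches the paper's: both arguments reduce to showing that $E$ is a $U$-set in $\bb R$ and then apply the transference theorem \cite[Theorem 4.9]{gralmul}. The paper simply cites \cite[Corollary 5.3]{gralmul} for the first step, while you give the direct argument; your sketch is fine (note that a pseudomeasure has bounded Fourier transform, so in fact only the zeroth-order term $\delta_{1/n}$ can survive, but your conclusion is the same).

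The second half, however, has a genuine gap which I do not believe can be closed along the lines you indicate. You correctly observe that the pigeonhole forces, at each level $k$, two of the $N+1$ points $(s,s+e_j)$ to share a rectangle $\alpha_s^{(k)}\times\beta_s^{(k)}$ of the decomposition. But the ``stabilisation'' you need --- a single positive-measure $A'\times\beta_\infty\subseteq\bigcap_k Q_{n_k}$ --- is precisely what $r(Q_{n_k})\to 0$ forbids. Concretely: if all $s$ in some $A'\subseteq A$ of positive measure triggered the \emph{same} rectangle $\alpha^{(k)}\times\beta^{(k)}$ at a given level $k$, then $A'\subseteq\alpha^{(k)}$ and $A'+e\subseteq\beta^{(k)}$, so $r(Q_{n_k})\geq\mu(A')>0$, contradicting $r(Q_{n_k})\to 0$ already at the single level $k$. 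Hence the partition of $A$ according to which rectangle is triggered must become arbitrarily fine as $k\to\infty$; the multiplicity bound $N$ controls only overlap, not granularity, and no Borel--Cantelli argument on the $\chi_{\alpha_s^{(k)}}$ will produce a persistent piece. (There is also a smaller issue in your double pigeonhole: you can arrange a fixed pair $(e,e')$ and positive-measure $A$ such that for each $s\in A$ the collision occurs for \emph{infinitely many} $k$, but not for \emph{every} $k$ simultaneously --- the levels depend on $s$.)

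The paper's route is entirely different and is operator-theoretic rather than combinatorial. One exploits the inequality $\|\pi_{Q_n}\|\leq|Q_n|$ and shows that $\sup_n\|\pi_{Q_n}\|=\infty$. The key is that the ``triangular truncation'' sending $\sum_{j=-N}^N c_j\lambda_{ja}$ to $\sum_{j=0}^N c_j\lambda_{ja}$ (keeping only non-negative shifts) is unbounded in $N$, because $\chi_{[0,\infty)}\notin B(\bb Z)$. By a scaling argument one has $\|\pi_{k,\,1/(k-1)!,\,1/k!}\|=\|\pi_{k,k,1}\|\to\infty$. Now for $a=1/k!$ the translates $T_{la}$, $0\leq l\leq k$, are supported on lines inside $E^*$ and hence fixed by every $\pi_{Q_n}$, while for $-k\leq l<0$ the supporting lines lie in $\bigcup_n Q_n^c$, so (after an $\varepsilon$-compactness localisation via Lemma \ref{phi-zero}) they are eventually killed by $\pi_{Q_n}$. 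Thus, up to the localisation, $\pi_{Q_n}$ restricted to the span of $\{P_Z T_{la}P_Z:|l|\leq k\}$ implements the triangular truncation, forcing $\|\pi_{Q_n}\|$ to be unbounded and contradicting $\sup_n|Q_n|<\infty$. The accumulation of $1/n$ at $0$ --- which your fixed $N{+}1$ lines do not use --- is exactly what makes the scaling work.
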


\begin{proof}
By Proposition \cite[Corollary 5.3]{gralmul}, $E$ is a set of uniqueness for $\bb{R}$.
By \cite[Theorem 4.9]{gralmul}, $E^*$ is a set of operator uniqueness for $\bb{R}\times\bb{R}$.
It remains to show that $E^*$ is not strongly quasi-diagonal.

Fix $a\in\mathbb R$ and consider the unitary shift operator $T_a$ on $L^2(\mathbb R)$ given by
$$(T_af)(x)=f(x-a), \ \ \ f\in L^2(\mathbb R).$$
Let $\cl A$ be the algebra generated by $T_a$ and $T_a^*$ and let $\cl A_+$ be its unital subalgebra generated by $T_a$.
We define a projection  $\pi:\cl A\to\cl A_+$  by
$$\pi\left(\sum_{k=-n}^{n}\alpha_kT_a^k\right) = \sum_{k=0}^n\alpha_k T_a^k.$$

\smallskip

\noindent {\it Claim 1.} The transformation $\pi$ is unbounded, if $\cl A$ and $\cl A_+$ are equipped with the operator norm.

\medskip

\noindent {\it Proof of Claim 1.}
Let $S$ be  the shift operator  on $\ell^2(\mathbb Z)$ given by
$Se_k=e_{k+1}$, where $\{e_k\}_{k\in\mathbb Z}$ is the standard basis in $\ell^2(\mathbb Z)$,
and set
$\tilde{\cl A} = {\rm span} \{S^k :  k\in \bb{Z}\}$
and
$\tilde{\cl A}_+ = {\rm span} \{S^k :  k \geq 0\}$.
It follows from the Gelfand-Naimark Theorem that the mapping
$T_a\mapsto S$ is an isometric isomorphism between  $\cl A$ (resp. $\cl A_+$)
and $\tilde{\cl A}$ (resp. $\tilde{\cl A}_+$).
Clearly, $\tilde{\cl A}$ and $\tilde{\cl A_+}$ are subalgebras of $\vn(\mathbb Z)$
(indeed, $S^n=\lambda_n$, $n\in\mathbb Z$).
The projection $\pi$ then corresponds to the map
$$\cl M_\varphi: T\mapsto \varphi\cdot T, \ \ \ T\in\tilde{\cl A},$$
where $\varphi(k)=\chi_{[0,\infty)}(k)$ and $\varphi\cdot S^k=\varphi(k)S^k$, $k\in\mathbb Z$.
It is known that $\varphi\notin M(A(\mathbb Z)) = B(\mathbb Z)$,
as otherwise it would be the Fourier-Stieltjes transform of a measure $\mu\in M(\mathbb T)$,
which is impossible (see e.g. \cite[Corollary 8.3]{Bennett}).
Since $\tilde{\cl A}$ is dense in $\vn(\mathbb Z)$, we obtain that ${\cl M}_\varphi$, and hence $\pi$, is unbounded.

\medskip

For $N\in\mathbb N$,
let $\cl A_N=\text{span}\{T_a^n:|n|\leq N\}$. Then $\cl A_N\subseteq \cl A_{N+1}$,
$N\in \bb{N}$, and $\cl A=\cup_{N\in\mathbb N}\cl A_N$.
By Claim 1, if  $\pi_{N,a}:=\pi|_{\cl A_N}$ then $\|\pi_{N,a}\|\to\infty$ as $N\to\infty$.

For $Z > 0$, let $P_Z = M_{\chi_{[-Z,Z]}}$ be the operator on $L^2(\bb{R})$ of multiplication
by $\chi_{[-Z,Z]}$,
and set
$$S_{n,Z,a}:= P_ZT_a^nP_Z\text{ and } \cl A_{N,Z,a}:=\text{span}\{S_{n,Z,a}:|n|\leq N\}=P_Z{\cl A_N}P_Z.$$
It is easy to see that the map $\pi_{N,Z,a} : \cl A_{N,Z,a} \to \cl A_{N,Z,a}$, given by
$$\pi_{N,Z,a}\left(\sum_{n=-N}^N \alpha_n S_{n,Z,a}\right) = \sum_{n=0}^N \alpha_n S_{n,Z,a},$$
is well-defined.

\medskip

\noindent {\it Claim 2. } The sequence $\left(\|\pi_{L,L,a}\|\right)_{L\in\mathbb N}$ is unbounded.

\medskip

\noindent {\it Proof of Claim 2.}
As $P_Z\to_{Z\to+\infty} I$ strongly,
$\pi_{N,Z,a}(P_ZXP_Z)\to_{Z\to+\infty} \pi_{N,a}(X)$ weakly for every $X\in \cl A_N$.
Thus,
$\|\pi_{N,a}\|\leq \sup_{Z > 0} \|\pi_{N,Z,a}\|$.
As $\|\pi_{N,a}\|\to\infty$, for any  positive $K$
there exist $N\in \bb{N}$ and $Z(N) > 0$ such that $\|\pi_{N,Z(N),a}\| > K$.
Clearly, $\|\pi_{N,Z,a}\|\geq\|\pi_{M,W,a}\|$ whenever $N\geq M$ and $Z\geq W$. Therefore
$\|\pi_{L,L,a}\|>K$ for any $L\in\mathbb N$, $L\geq \max\{N, Z(N)\}$,
establishing the claim.

\medskip

\noindent  {\it Claim 3.}
$\|\pi_{N,Z,a}\|=\|\pi_{N, Zk, ak}\|$, for all $a,Z,k>0$ and all $N\in \mathbb N$.

\medskip

\noindent  {\it Proof of Claim 3.}
Let $U : L^2(\mathbb R)\to L^2(\mathbb R)$ be the unitary operator
given by
$\displaystyle (Uf)(x)= \frac{1}{\sqrt{k}}f\left(\frac{x}{k}\right)$, $f\in L^2(\mathbb R)$. We have
\begin{eqnarray*}
(UP_ZT_aP_ZU^{-1}f)(x)&=&\chi_{[-kZ,kZ]}\chi_{[-kZ+ka,kZ+ka]}f(x-ka)\\
&=&(P_{Zk}T_{ak}P_{Zk}f)(x).
\end{eqnarray*}
Hence $U\cl A_{N,Z,a}U^{-1}=\cl A_{N,Zk,ak}$ and $U\pi_{N,Z,a}(X)U^{-1}=\pi_{N,Zk,ak}(UXU^{-1})$
for any $X\in \cl A_{N,Z,a}$, implying the statement.

\medskip

Assume now that $E^*$ is strongly quasi-diagonal and let $(Q_n)_{n\in \bb{N}}$ be a decreasing sequence of elementary subsets of
$\bb{R}\times\bb{R}$ such that $E^* \simeq \cap_{n=1}^{\infty} Q_n$ and
$C := \sup_{n\in \bb{N}} |Q_n| < \infty$.
Set $F_a=\{(x,y): y - x = a\}$, $a\in \bb{R}$.
Note that the operator $T_a$, and hence $P_ZT_aP_Z$, $Z>0$, is supported
by $F_a$, $a\in \bb{R}$.
Hence $\pi_{Q_n}(T_a)=T_a$ if $a=1/m$, $m\in\mathbb N$, or $a = 0$.

Let $a < 0$. Given $Z>0$, we have $F_a\cap([-Z,Z]\times[-Z,Z])\subseteq \cup_{n\in\bb{N}} Q_n^c$.
By Lemma \ref{phi-zero},
given $\varepsilon>0$, there exist $X_\varepsilon, Y_\varepsilon\subseteq [-Z,Z]$
and a finite subset $N(\varepsilon)\subseteq \bb N$  such that
$m([-Z,Z]\setminus X_\varepsilon) < \varepsilon$, $m([-Z,Z]\setminus Y_\varepsilon) < \varepsilon$ and
$F_a\cap(X_\varepsilon\times Y_\varepsilon)\subseteq \cup_{n\in N(\varepsilon)} Q_n^c$.
Thus, there exists
$n(Z,\varepsilon) \in \bb{N}$ such that $M_{\chi_{Y_\varepsilon}}\pi_{Q_n}(T_a)M_{\chi_{X_\varepsilon}}=0$
 for all $n\geq n(Z,\varepsilon)$.

Let $Z = \frac{1}{(k-1)!}$, $N=k$ and $a=\frac{1}{k!}$.
We have that $S_{l,Z,a}$, $|l|\leq N$, is supported by $$\left\{\frac{m}{k!}: -k \leq m \leq k\right\}^*\cup\{0\}^*$$
which is a subset of $\left\{\frac{1}{r}: r\in\mathbb Z, r\neq 0\right\}^*\cup \{0\}^*$.   Hence, given $\varepsilon >0$,
by the previous paragraph, there exist $n=n(k,\varepsilon)$ and subsets $X_\varepsilon$, $Y_\varepsilon$ of $[-Z,Z]$ such that $m([-Z,Z]\setminus X_\varepsilon)$, $m([-Z,Z]\setminus Y_\varepsilon)<\varepsilon$ and
$$M_{\chi_{Y_\varepsilon}}\pi_{Q_{n(k,\varepsilon)}}\left(\sum_{l=-k}^k\alpha_l S_{l,Z,a}\right)M_{\chi_{X_\varepsilon}}
= M_{\chi_{Y_\varepsilon}}\sum_{l=0}^k\alpha_l S_{l,Z,a}M_{\chi_{X_\varepsilon}}.$$
Therefore,
\begin{equation}\label{eq_LR}
\|\pi_{X_{\varepsilon}\times Y_{\varepsilon}}
\pi_{Q_{n(k,\varepsilon)}}\|
\geq \|\pi_{X_{\varepsilon}\times Y_{\varepsilon}}\pi_{k,1/(k-1)!,1/k!}\|.
\end{equation}

We claim now that the sequence $(\|\pi_{Q_n}\|)_{n\in \mathbb N}$ is unbounded. In fact, assuming the contrary,
(\ref{eq_LR}) implies that the set
$$\left\{\|\pi_{X_{\varepsilon}\times Y_{\varepsilon}}\pi_{k,1/(k-1)!,1/k!}\| : k\in \bb N, \varepsilon > 0\right\}$$
is bounded.
Since
$$M_{\chi_{Y_\varepsilon}}\pi_{k,1/(k-1)!,1/k!}(T)M_{\chi_{X_\varepsilon}} \to_{\varepsilon\to 0} \pi_{k,1/(k-1)!,1/k!}(T)$$
weakly for every $T$,
the set $(\|\pi_{k,1/(k-1)!,1/k!}\|)_{k\in\mathbb N}$ is also bounded.
By Claim 3, $\|\pi_{k,1/(k-1)!,1/k!}\|=\|\pi_{k,k,1}\|$, contradicting Claim 2.
Hence $(\|\pi_{Q_n}\|)_{n\in \mathbb N}$ is unbounded. As $|Q_n|\geq\|\pi_{Q_n}\|$, this contradicts the assumption that $\sup_{n\in\mathbb N}|Q_n|<\infty$. Therefore $E^*$ is not strongly quasi-diagonal.
\end{proof}

Let $(Z,\sigma)$ be a standard measure space,
$f : X\to Z$, $g : Y\to Z$ be measurable maps, and suppose that $f$
is not constant on any set of positive measure.
Assume that the measures $\mu$ and $\nu$ are finite.
It was shown in \cite[Lemma 6.2]{gralmul}
that, in this case, the set $\{(x,y) : f(x) = g(y)\}$ is quasi-diagonal.
It turns out that this class of quasi-diagonal sets is not exhausting.
Indeed, any set of the form $\{(x,y) : f(x) = g(y)\}$ is a set of operator synthesis
(see \cite{st1} and \cite{tod}), while we will
next show that this does not extend to quasi-diagonal sets.

\begin{proposition}\label{p_qdnos}
There exists a quasi-diagonal set which is not a set of operator synthesis.
\end{proposition}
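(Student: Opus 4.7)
The key observation is that quasi-diagonality is inherited under intersection with an arbitrary $\omega$-closed set: if $\tau\subseteq X\times Y$ is quasi-diagonal and $\kappa\subseteq X\times Y$ is any $\omega$-closed set, then any decreasing sequence $(Q_k)_{k\in\bb N}$ of elementary sets witnessing quasi-diagonality of $\tau$ (so that $\sup_k|Q_k|<\infty$, $r(Q_k)\to 0$, and $\tau\subseteq_\omega\cap_k Q_k$) also witnesses quasi-diagonality of $\tau\cap\kappa$, since $\tau\cap\kappa\subseteq_\omega\tau\subseteq_\omega\cap_k Q_k$ while conditions (i) and (ii) of Definition~\ref{d_qd} are left unchanged. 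Thus it suffices to locate a quasi-diagonal set $\tau$ and an $\omega$-closed set $\kappa$ such that the intersection $\tau\cap\kappa$ fails operator synthesis.

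The natural candidates are suggested by the discussion immediately preceding the proposition: take $\tau$ to be a set of the form $\{(x,y): f(x)=g(y)\}$ with $f:X\to Z$, $g:Y\to Z$ measurable and $f$ not constant on any set of positive measure, which is quasi-diagonal by \cite[Lemma~6.2]{gralmul} and operator-synthetic by \cite{st1,tod}; and take $\kappa$ to be a set of finite width, which is also operator-synthetic (by \cite{st1,tod}). By \cite[Theorem~4.9]{st1} there exist a set of finite width $\kappa$ and an operator-synthetic set $\tau$ whose intersection is not a set of operator synthesis; this is precisely the counterpart (for intersections) of Corollary~\ref{c_fw}. Specialising (or adapting) the construction of \cite[Theorem~4.9]{st1} so that the operator-synthetic factor $\tau$ has the form $\{(x,y): f(x)=g(y)\}$ yields a quasi-diagonal $\tau\cap\kappa$ which, by construction, is not a set of operator synthesis.

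The main obstacle is to ensure that the operator-synthetic set appearing in \cite[Theorem~4.9]{st1} can be arranged (directly or by a mild variation of the example) to be quasi-diagonal of the form $\{f(x)=g(y)\}$; alternatively, one may produce a concrete pair from scratch, choosing $\tau=\{(x,y)\in X\times Y: f(x)=g(y)\}$ for explicit $f,g$ (for instance, identity functions on an interval, giving the diagonal) and a finite-width set $\kappa$ tailored so that $\tau\cap\kappa$ carries the asymmetry between $\Phi(\tau\cap\kappa)$ and $\Psi(\tau\cap\kappa)$ in the characterisation (\ref{eq_minmax}) of $\frak M_{\min}$ and $\frak M_{\max}$. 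Once such a pair is produced, the inheritance observation from the first paragraph furnishes the required quasi-diagonal non-synthetic set, completing the proof.
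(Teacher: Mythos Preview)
Your inheritance observation is correct and is exactly the reduction the paper uses: any $\omega$-closed subset of a quasi-diagonal set is quasi-diagonal, so it suffices to exhibit a non-synthetic $\omega$-closed subset of some set of the form $\{(x,y):f(x)=g(y)\}$ with $f$ non-constant on sets of positive measure.

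However, your proposal stops short of producing such a subset. You defer to \cite[Theorem~4.9]{st1} and say one should ``specialise or adapt'' the construction so that the synthetic factor there has the form $\{f=g\}$; you then acknowledge this as ``the main obstacle'' without resolving it. That is the gap: nothing in your write-up actually delivers a non-synthetic set sitting inside a quasi-diagonal one. Appealing to \cite[Theorem~4.9]{st1} as a black box does not settle this, because that theorem as stated only gives \emph{some} synthetic set whose intersection with a finite-width set is non-synthetic; you would need to open up the proof and verify (or modify it so) that the synthetic set constructed there is of the required equality type. Your alternative suggestion, to tailor a finite-width $\kappa$ against a diagonal $\tau$ ``from scratch'' so as to separate $\Phi$ and $\Psi$, is likewise not carried out.

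The paper's argument avoids this analysis of \cite{st1} entirely and is much more direct. It works on $G=\bb T^2$ and uses harmonic analysis: take any closed $K\subseteq\bb T$ failing spectral synthesis (Malliavin), lift to $K\times\{0\}\subseteq\bb T^2$, which still fails (local) spectral synthesis since synthesis passes to closed subgroups (Remark~\ref{rem}(ii)), and then apply Froelich's transference \cite{froelich} to conclude that $(K\times\{0\})^*\subseteq\bb T^2\times\bb T^2$ is not operator synthetic. Finally
\[
(K\times\{0\})^*\subseteq(\bb T\times\{0\})^*=\{((x_1,y),(x_2,y)):x_1,x_2,y\in\bb T\}=\{(u,v):f(u)=f(v)\},
\]
where $f(x,y)=y$, and the right-hand side is quasi-diagonal by \cite[Lemma~6.2]{gralmul}. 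Your inheritance observation then finishes. The point is that the non-synthesis input comes from classical harmonic analysis plus the $E\mapsto E^*$ transference, not from the internal structure of the example in \cite[Theorem~4.9]{st1}.
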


\begin{proof}
Let
$$\kappa = \{((x_1,y),(x_2,y)) : x_i\in\mathbb T, i = 1,2, \ y\in \mathbb T\}.$$
If $f : \mathbb T^2\to\mathbb T$
is the function given by
$f(x,y) = y$, then
$$\kappa = \{((x_1,y_1),(x_2,y_2)) \in \mathbb T^2\times\mathbb T^2 : f(x_1,y_1) = f(x_2,y_2)\}$$
and, by \cite[Lemma 6.2]{gralmul}, $\kappa$ is quasi-diagonal.
Let $K\subseteq \mathbb T$ be a subset that fails spectral synthesis in $\mathbb T$.
Using the fact that local spectral synthesis passes to supergroups (see Remark \ref{rem} (ii)),
we have that $K\times\{0\}\subseteq \mathbb T\times\{0\}$ is not a set of spectral synthesis for $\mathbb T^2$.

By \cite{froelich}, $(K\times\{0\})^*$ is not a set of operator synthesis. On the other hand,
$(K\times\{0\})^*\subseteq (\mathbb T\times\{0\})^* = \kappa$, and is hence a quasi-diagonal set.
\end{proof}


\section{Connection between compact operator synthesis and reduced spectral synthesis}\label{s_c}

Let $G$ be a second countable locally compact group, equipped with left Haar measure.
For a function $u : G\to \bb{C}$, let
$N(u) : G\times G \to \bb{C}$ be the function given by
$$N(u)(s,t) = u(ts^{-1}).$$
By \cite{bf, j}, if $u$ belongs to  the algebra $M^{\rm cb}A(G)$ of
completely bounded multipliers of $A(G)$
then $N(u)$ is a Schur multiplier.
In \cite{gralmul} we introduced a symbolic calculus for normal completely bounded maps acting on $\cl B(L^2(G))$.
Namely, for every $\varphi\in {\cl T}(G)$ and $T\in \cl B(L^2(G))$,
we let $E_\varphi(T)\in \vn(G)$ be the operator defined by
the identities
$$\langle E_\varphi(T),u\rangle=\langle T,\varphi N(u)\rangle, \ \ \  u\in A(G),$$
where the first pairing is between $\vn(G)$ and $A(G)$, while the second one is between $ \cl B(L^2(G))$ and ${\cl T}(G)$.
The transformation $\varphi\mapsto E_\varphi$ is contractive as a map from ${\cl T}(G)$
into the space ${\rm CB}^{w^*}(\cl B(L^2(G)),\vn(G))$ of all weak* continuous completely bounded linear maps
from $\cl B(L^2(G))$ into $\vn(G)$, and
$E_\varphi(\cl K(L^2(G)))\subseteq C_r^*(G)$ for any $\nph\in {\cl T}(G)$ \cite[Theorem 4.6]{gralmul}.
We will use this result to clarify the connection between reduced spectral synthesis and compact operator synthesis.
Recall that, for $E\subseteq G$, we set
$$E^* = \{(s,t)\in G\times G : ts^{-1}\in E\}.$$

\begin{theorem}\label{ess-comp}
Let $G$ be a second countable locally compact group and $E\subseteq G$ be a closed subset.
The following are equivalent:

(i) \ $E$ is a set of reduced local spectral synthesis;

(ii) $E^*\subseteq G\times G$ is a set of compact operator synthesis.
\end{theorem}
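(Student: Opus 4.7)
The theorem is the ``compact/reduced'' analogue of the Ludwig--Turowska transference \cite{lutu} between local spectral synthesis and operator synthesis, and I plan to adapt their proof strategy. The key new ingredient is the symbolic calculus $\varphi\mapsto E_\varphi$ of \cite[Theorem~4.6]{gralmul}: it sends $\cl K(L^2(G))$ into $C_r^*(G)$ and intertwines the two relevant pairings via the identity $\langle E_\varphi(T),u\rangle=\langle T,\varphi N(u)\rangle$, providing the bridge between ``compact operators / reduced C*-algebra'' on the two sides.

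For (i)$\Rightarrow$(ii), I fix $T\in {\mathfrak M}_{\max}(E^{*})\cap \cl K(L^{2}(G))$ and aim to show $T\in {\mathfrak M}_{\min}(E^{*})=\Phi(E^{*})^{\perp}$. For each $\varphi\in \cl T(G\times G)$, I first check that $E_\varphi(T)\in C_r^{*}(G)$ (immediate from \cite{gralmul}) and that $\supp_{\vn}(E_\varphi(T))\subseteq E$: given $t\notin E$, the regularity of $A(G)$ supplies $w\in A(G)\cap C_c(G)$ with $w(t)\ne 0$ and $\supp(w)\cap E=\emptyset$; choosing an open neighbourhood $V$ of $E$ disjoint from $\supp(w)$, the function $N(w)$ vanishes on the $\omega$-open neighbourhood $V^{*}$ of $E^{*}$, so $\varphi N(w)N(v)\in \Psi(E^{*})$ and hence
\begin{equation*}
\langle w\cdot E_\varphi(T),v\rangle=\langle T,\varphi N(w)N(v)\rangle=0
\end{equation*}
by $T\in \Psi(E^{*})^{\perp}$. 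Hypothesis (i) applied to $E_\varphi(T)$ then yields $\langle T,\varphi N(u)\rangle=\langle E_\varphi(T),u\rangle=0$ for all $\varphi\in \cl T(G\times G)$ and $u\in I(E)\cap C_c(G)$. The finishing step is to invoke a density result asserting that $\mathrm{span}\{\varphi N(u):\varphi\in \cl T(G\times G),\,u\in I(E)\cap C_c(G)\}$ is $\|\cdot\|_{\cl T}$-dense in $\Phi(E^{*})$, which I expect to be the principal analytic obstacle. This can be proved in the spirit of the approximation lemma underlying \cite{lutu}: reduce via a partition of unity on $G\times G$ to a rectangular setting, and use regularity of $A(G)$ together with the fact that, marginally a.e., $h(s,t)\chi_{E^{*}}(s,t)=0$ forces $h$ to be locally expressible as $\varphi N(u)$ with $u(ts^{-1})$ vanishing on $E$.

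For (ii)$\Rightarrow$(i), I begin with $S\in C_r^{*}(G)$ satisfying $\supp_{\vn}(S)\subseteq E$ and $u\in I(E)\cap C_c(G)$ and must show $\langle S,u\rangle=0$. Viewed as an operator on $L^{2}(G)$, $S$ is supported by $E^{*}$ in the masa-bimodule sense, by the Ludwig--Turowska correspondence between $\vn$-support and operator support, although $S$ itself is typically non-compact. The plan is to localise $S$ to compact pieces: writing $u=\xi*\tilde\eta$ with $\xi,\eta\in L^{2}(G)$ of compact support (obtained by truncation and approximation using $u\in C_c(G)$), one rewrites $\langle S,u\rangle=(S\xi,\eta)$ as a trace pairing $\tr(SR_u)$ against the rank-one trace-class operator $R_u$ associated to $\xi\otimes\bar\eta$. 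The compact operator $SR_u$ (or a symmetrised variant $P(\beta)SP(\alpha)R_u$ with $P(\alpha),P(\beta)$ projections onto finite-measure sets absorbing $\supp(\xi)\cup\supp(\eta)$) lies in ${\mathfrak M}_{\max}(E^{*})\cap \cl K(L^{2}(G))$ because $S$ does and the masa elements preserve the support; by hypothesis (ii) it therefore belongs to ${\mathfrak M}_{\min}(E^{*})=\Phi(E^{*})^{\perp}$, and since $u$ vanishes on $E$ the element $N(u)$ vanishes marginally a.e.\ on $E^{*}$, so the corresponding $\cl T$-pairing collapses to zero. The subtle point is ensuring that the compactification of $S$ preserves the support condition and that the resulting pairing genuinely recovers $\langle S,u\rangle$; this is handled by the same slice-map/localisation techniques as in \cite{lutu}, now routed through the $E_\varphi$ calculus rather than directly on $\vn(G)$.
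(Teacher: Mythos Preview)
Your overall strategy matches the paper's, but there is a genuine gap in each direction.

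\medskip

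\textbf{(i)$\Rightarrow$(ii).} You correctly obtain $\langle T,\varphi N(u)\rangle=0$ for all $\varphi\in\cl T(G)$ and all $u\in I(E)\cap C_c(G)$; this is exactly the paper's Claim~1. The problem is your next step: the density assertion
\[
\overline{\mathrm{span}\{\varphi N(u):\varphi\in\cl T(G),\ u\in I(E)\cap C_c(G)\}}^{\|\cdot\|_{\cl T}}=\Phi(E^*)
\]
is not proved, and your sketch (``partition of unity, regularity, $h$ is locally $\varphi N(u)$'') does not supply an argument. This is not a routine approximation lemma: passing from functions of the special form $N(u)$, $u\in A(G)$, to arbitrary elements of $\Phi(E^*)$ is exactly the hard analytic step, and the paper does \emph{not} establish such a density. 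Instead it bypasses the issue by using the Schur-multiplier characterisation of $\mathfrak M_{\min}$ from \cite{st2}: one must show $S_w(T_K)=0$ for every Schur multiplier $w$ vanishing on $E^*$ and every compact $K$. The paper first upgrades from $u\in I(E)\cap C_c(G)$ to all $u\in M^{\rm cb}A(G)$ vanishing on $E$ (Claim~2, a density argument in $\cl T(G)$), and then invokes the representation-theoretic machinery of \cite{akt} (the functions $h_{i,j}^\pi$, $\tilde h_{i,j}^\pi$ and the associated transfer lemmas) to reach arbitrary Schur multipliers (Claim~3). That machinery is the substitute for your unproved density claim, and it is substantial.

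\medskip

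\textbf{(ii)$\Rightarrow$(i).} The operator $SR_u$ you propose is rank one, hence compact, but its support need not be contained in $E^*$: right multiplication by a rank-one operator destroys the masa-bimodule support condition, so the hypothesis cannot be applied to $SR_u$. Your parenthetical variant is on the right track, but the point is to use $S_K:=P(K)SP(K)$ \emph{alone} (no $R_u$): one must observe that $P(K)\lambda(f)P(K)$ is Hilbert--Schmidt for compactly supported $f\in L^1(G)$, whence $S_K\in\cl K$ for $S\in C_r^*(G)$. Then $S_K\in\mathfrak M_{\max}(E^*)\cap\cl K=\mathfrak M_{\min}(E^*)$, so $\langle S_K,N(u)\varphi\rangle=0$ for all $\varphi\in\cl T(G)$, i.e.\ $S_{N(u)}(S_K)=P(K)(u\cdot S)P(K)=0$. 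Letting $K$ exhaust $G$ gives $u\cdot S=0$, and pairing against any $v\in A(G)$ with $v=1$ on $\supp(u)$ yields $\langle S,u\rangle=\langle u\cdot S,v\rangle=0$. This is the paper's argument; your trace-pairing detour is unnecessary and, as written, incorrect.
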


\begin{proof}
(i)$\Rightarrow$(ii)
Assume that $E\subseteq G$ is a set of reduced local spectral synthesis and
let $T\in \mathfrak M_{\max}(E^*)\cap\cl K$.

\smallskip

\noindent {\it Claim 1. } If $u\in I(E)\cap C_c(G)$ then $S_{N(u)}(T) = 0$.

\smallskip

\noindent {\it  Proof of Claim. }
Fix $u\in I(E)\cap C_c(G)$ and let $\nph\in {\cl T}(G)$.
By \cite[Theorem 4.6]{gralmul}, $E_\varphi(T)\in C_r^*(G)$.
By the proof of \cite[Theorem 4.9]{gralmul}, $E_\varphi(T) \in J(E)^{\perp}$, whenever $\nph = a\otimes b$
for some $a,b\in L^2(G)$. Since the mapping
from ${\cl T}(G)$ into $\vn(G)$ sending
$\psi$ to $E_\psi(T)$, is linear and continuous
\cite[Theorem 4.6]{gralmul}, we have that $E_\varphi(T) \in J(E)^{\perp}$.
Therefore
$$0=\langle E_\varphi(T),u\rangle=\langle T, N(u) \varphi\rangle= \langle S_{N(u)}(T),\varphi\rangle;$$
since this holds for an arbitrary $\nph\in {\cl T}(G)$, we have that $S_{N(u)}(T) = 0$.

\smallskip

\noindent {\it Claim 2. }
If $u\in M^{\rm cb}A(G)$ vanishes on $E$ then $S_{N(u)}(T) = 0$.

\smallskip

\noindent {\it  Proof of Claim. }
Let $u\in M^{\rm cb}A(G)$ vanish on $E$
and $v\in A(G)\cap C_c(G)$. Then $uv\in I(E)\cap C_c(G)$ and, by Claim 1, for every
$F\in {\cl T}(G)$ we have
$$\langle S_{N(u)}(T), N(v)F\rangle = \langle S_{N(uv)}(T),F\rangle = 0.$$
Consider the set $\cl S = \{N(w)F : w\in A(G)\cap C_c(G), F\in {\cl T}(G)\}$.
It is straightforward to verify that
$\cl S$ is invariant under Schur multipliers. Moreover,
the null set of $\cl S$, as defined on p. 296 of \cite{st1}, is marginally equivalent to
the empty set. By \cite[Corollary 4.3]{st1}, ${\rm span}\cl S$ is dense in ${\cl T}(G)$.
It follows that $S_{N(u)}(T) = 0$.

\smallskip

\noindent {\it Claim 3. }
If $w\in \frak{S}(G,G)$ is a Schur multiplier that vanishes on $E^*$ and $K\subseteq G$ is a compact set
then $S_{w}(P(K)TP(K)) = 0$.

\smallskip

\noindent {\it  Proof of Claim. }
Set $T_K = P(K)TP(K)$.
Fix a Schur multiplier $w$ that vanishes on $E^*$, let $\nph\in {\cl T}(G)$ and
set $h = w\nph$. Clearly, $h$ belongs to ${\cl T}(G)$ and vanishes on $E^*$,
and it suffices to show that $\langle T_K, h\rangle = 0$.
By \cite[Lemma 3.13]{akt}, we may assume that $h$ is in addition a Schur multiplier
and $h = h\chi_{L\times M}$ for some compact subsets $L$ and $M$ of $G$.

For each irreducible representation $\pi$ of $G$ of dimension $d_{\pi}$,
the functions $h_{i,j}^{\pi}$ and $\tilde{h}_{i,j}^{\pi}$, defined by \cite[(4.4)]{lt},
vanish on $E^*$, for all $i,j \in \{1,\dots,d_{\pi}\}$ (or $i,j\in \bb{N}$ if $d_{\pi} = \infty$).
Moreover, by \cite[Lemma 3.10]{akt}, $\tilde{h}_{i,j}^{\pi}\in \frak{S}(G,G)$; thus,
as, for any $r\in G$, we have that $\tilde{h}_{i,j}^\pi(sr,tr)=\tilde{h}_{i,j}^\pi(s,t)$ for almost all $(s,t)$,
$\tilde{h}_{i,j}^{\pi} = N(u_{i,j}^{\pi})$ for some element $u_{i,j}^{\pi}\in
M^{\rm cb}A(G)$, vanishing on $E$ \cite{att}.
By Claim~2,
$$S_{\tilde{h}_{i,j}^{\pi}}(T_K) = P(K) S_{\tilde{h}_{i,j}^{\pi}}(T) P(K) = 0.$$
Let $f,g\in L^{\infty}(G)$ be compactly supported. Then $f\otimes g\in \frak{S}(G,G)$ and
$$\langle S_{f\otimes g}(T_K),\tilde{h}_{i,j}^{\pi}\chi_{K\times K} \rangle
=  \langle S_{\tilde{h}_{i,j}^{\pi}}(T_K) f, \overline{g}\rangle = 0.$$
By
\cite[Lemma 3.8]{akt}, $h_{i,j}^{\pi}\chi_{K\times K} \in \frak{S}(G,G)$ and,
by \cite[Lemma 3.12]{akt},
$$\langle S_{f\otimes g}(T_K),h_{i,j}^{\pi}\chi_{K\times K} \rangle = 0.$$
By \cite[Lemma 3.14]{akt},
$$
\langle T_K,h\rangle = \langle S_{\chi_K\otimes \chi_K}(T_K), h\rangle = 0.$$

\smallskip

By Claim 3 and \cite[Proposition~5.3]{st2}, $T_K \in \mathfrak M_{\min}(E^*)$.
Since $G$ is $\sigma$-compact, we conclude that $T\in \mathfrak M_{\min}(E^*)$.

(ii)$\Rightarrow$(i)
Suppose that $E^*$ is a set of compact operator synthesis and let $S\in C_r^*(G)$ with $\supp_{\vn}(S)\subseteq E$.
By \cite[Lemma 4.8]{gralmul}, $S$ is supported by $E^*$.
It is straightforward to see that, if $K\subseteq G$ is a compact set and
$f\in L^1(G)$ is compactly supported, then $P(K)\lambda(f)P(K)$ is a
Hilbert-Schmidt operator. It follows that the operator $S_K := P(K)S P(K)$ is compact.
Since $E^*$ satisfies compact operator synthesis, $S_K \in \frak{M}_{\min}(E^*)$.
Let $u\in I(E)$ be compactly supported. If $\nph\in \cl T(G)$
then $N(u)\nph \in \cl T(G)$ and vanishes on $E^*$.
It follows from (\ref{eq_minmax}) that
$\langle S_K, N(u)\varphi\rangle = 0$ and therefore
$$ P(K)(u\cdot S)P(K) = S_{N(u)}(S_K) = 0.$$
Since $K$ is an arbitrary compact set, $u\cdot S = 0$.
Let $v\in A(G)$ be such that $v = 1$ on the support of $u$. Then
$$\langle S,u\rangle = \langle S,uv\rangle = \langle u\cdot S,v\rangle = 0.$$
Thus, $E$ is a set of reduced local spectral synthesis.
\end{proof}

\begin{remark}\rm
We note that if $A(G)$ has a (not necessarily bounded) approximate identity,
the word "local" can be removed from the statement of the previous theorem.
\end{remark}

\begin{remark}\rm
It is known that the (closed) unit ball and the complement of the open unit ball in ${\mathbb R}^n$ are sets of spectral, and hence of
reduced spectral, synthesis (see e.g. \cite{herz}), but, as shown in Proposition \ref{noness},
if $n\geq 4$, their intersection, namely,
the sphere $S^{n-1}$, is not a set of reduced spectral synthesis.
It hence follows from Theorem~\ref{ess-comp} that the intersection of two sets of compact operator
synthesis is not necessarily a set of compact operator synthesis.
\end{remark}

Let $G$ be a second countable locally compact group and $\omega$
be a continuous homomorphism from $G$ into the multiplicative group $\bb{R}_+$ of positive reals.
For $r\in \bb{R}_+$, let
$$E_{\omega}^r = \{s\in G : \omega(s) \leq r\}.$$
Call a subset $E\subseteq G$ \emph{finitely presented}
if there exist continuous homomorphisms $\omega_i : G\to \bb{R}_+$
and values $r_i\in \bb{R}_+$, $i = 1,\dots,n$, such that
$$E = E_{\omega_1}^{r_1} \cap \cdots \cap E_{\omega_n}^{r_n}.$$

\begin{corollary}\label{c_fwg}
Let $G$ be a second countable locally compact group, $E\subseteq G$ be a finitely presented set
and $F\subseteq G$ be a set of reduced local spectral synthesis. Then $E\cup F$ and $E\cap F$
are sets of reduced local spectral synthesis.
\end{corollary}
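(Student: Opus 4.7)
The plan is to transport everything to the operator-theoretic side via Theorem \ref{ess-comp} and then apply Corollary \ref{c_fw}. Observe that the map $E\mapsto E^*$ respects Boolean operations: $(E\cap F)^* = E^*\cap F^*$ and $(E\cup F)^* = E^*\cup F^*$. Hence, by Theorem \ref{ess-comp}, it suffices to prove that $E^*\cap F^*$ and $E^*\cup F^*$ are sets of compact operator synthesis in $G\times G$. Since $F$ satisfies reduced local spectral synthesis, a further application of Theorem \ref{ess-comp} shows that $F^*$ is a set of compact operator synthesis. In view of Corollary \ref{c_fw}, the entire argument will therefore be complete once I verify that $E^*$ is a set of finite width.

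To verify the latter, write $E = E_{\omega_1}^{r_1}\cap\cdots\cap E_{\omega_n}^{r_n}$, where each $\omega_i:G\to \bb{R}_+$ is a continuous homomorphism and $r_i>0$. Since $\omega_i(ts^{-1}) = \omega_i(t)\omega_i(s)^{-1}$, the defining condition $ts^{-1}\in E_{\omega_i}^{r_i}$ is equivalent to $\omega_i(t)\leq r_i\omega_i(s)$, and, taking logarithms, to
\[
-\log \omega_i(s) \;\leq\; \log r_i - \log\omega_i(t).
\]
Setting $f_i(s) := -\log\omega_i(s)$ and $g_i(t) := \log r_i - \log\omega_i(t)$ (both continuous, hence measurable, real-valued functions on $G$), we obtain
\[
E^* = \{(s,t)\in G\times G : f_i(s)\leq g_i(t), \ i=1,\dots,n\},
\]
which is precisely the defining form (\ref{width}) of a set of finite width.

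Combining the three ingredients, $F^*$ is a set of compact operator synthesis by Theorem \ref{ess-comp}, $E^*$ is of finite width by the paragraph above, and Corollary \ref{c_fw} then gives that both $E^*\cap F^* = (E\cap F)^*$ and $E^*\cup F^* = (E\cup F)^*$ are sets of compact operator synthesis; a final invocation of Theorem \ref{ess-comp} yields the claim. I do not foresee a serious obstacle: the only nontrivial point is the rewriting that exhibits $E^*$ as a finite-width set, and that is a direct consequence of the multiplicativity of the $\omega_i$.
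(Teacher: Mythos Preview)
Your proof is correct and follows essentially the same approach as the paper: transfer to the operator side via Theorem \ref{ess-comp}, identify $E^*$ as a set of finite width, and apply Corollary \ref{c_fw}. The only difference is that you spell out explicitly the verification that $E^*$ has finite width (via the logarithmic rewriting of the multiplicative condition), whereas the paper leaves this step implicit.
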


\begin{proof}
By Theorem \ref{ess-comp} (resp. Corollary \ref{c_fw}),
$F^*$ (resp. $E^*$) satisfies compact operator synthesis.
By Corollary \ref{c_fw} again,
$(E\cap F)^* = E^* \cap F^*$ and
$(E\cup F)^* = E^*\cup F^*$ are sets of compact operator synthesis in $G\times G$.
By Theorem \ref{ess-comp}, $E\cap F$ and $E\cup F$ are sets of reduced local spectral synthesis.
\end{proof}

\begin{corollary}\label{c_cosvsos}
There exists a set of compact operator synthesis that does not satisfy operator synthesis.
\end{corollary}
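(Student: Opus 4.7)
The plan is to combine Theorem \ref{ess-comp} with the Ludwig--Turowska transference theorem recalled in the introduction, together with one of the examples from Section \ref{s_rssde}. Concretely, I would fix a Pisot number $r$ and take $E = E(r) \subseteq \mathbb{T}$ to be the set described in Example \ref{ex_ess}(iii); that example already records that $E(r)$ is a set of uniqueness (hence, by Example \ref{ex_ess}(ii), of reduced spectral synthesis), yet fails spectral synthesis.

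Because $\mathbb{T}$ is compact, every $u\in A(\mathbb{T})$ has compact support, so $I(E)\cap C_c(\mathbb{T}) = I(E)$. It follows that, for closed subsets of $\mathbb{T}$, reduced (resp.\ classical) spectral synthesis coincides with its local counterpart. Thus our $E$ satisfies reduced local spectral synthesis while failing local spectral synthesis.

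Theorem \ref{ess-comp} then gives that $E^{*}\subseteq\mathbb{T}\times\mathbb{T}$ is a set of compact operator synthesis. On the other hand, the Ludwig--Turowska theorem of \cite{lutu} states that $E$ satisfies local spectral synthesis if and only if $E^{*}$ satisfies operator synthesis; since $E$ fails the former, $E^{*}$ fails the latter, which is exactly the corollary.

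I do not anticipate any real obstacle here: the argument is a clean combination of Theorem \ref{ess-comp} with the classical Ludwig--Turowska transference and Example \ref{ex_ess}(iii). The only point that requires a moment of care is the passage between local and global (reduced) spectral synthesis, which is immediate in the compact setting by the observation above. An alternative example, if one preferred to avoid the Pisot construction, would be to start from any closed $U$-set that fails spectral synthesis (for instance a suitable closed subset of $\mathbb{T}$ obtained from \cite[Theorem~3.3.2]{gmcgehee}), and run the same two-transference argument.
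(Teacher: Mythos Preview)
Your proposal is correct and follows essentially the same approach as the paper: pick a closed $E\subseteq\mathbb{T}$ that is a $U$-set (hence of reduced spectral synthesis) but fails spectral synthesis, apply Theorem~\ref{ess-comp} to get that $E^*$ satisfies compact operator synthesis, and invoke the Ludwig--Turowska transference to conclude $E^*$ fails operator synthesis. The paper's proof is terser and cites \cite[Corollary~4.4]{lt} rather than \cite{lutu} for the transference, but the logic is identical; your explicit remark that local and global notions coincide on the compact group $\mathbb{T}$ is a helpful clarification.
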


\begin{proof}
By Example \ref{ex_ess}, there exists a closed subset $E\subseteq \bb{T}$ of reduced spectral synthesis
that does not satisfy spectral synthesis. By Theorem \ref{ess-comp}, $E^*$ is a set of compact operator synthesis,
and by \cite[Corollary 4.4]{lt}, $E^*$ does not satisfy operator synthesis.
\end{proof}

In view of Corollary \ref{c_cosvsos}, it is natural to ask which measure spaces admit sets
that fail operator synthesis. We provide an answer in the next proposition.

\begin{proposition}\label{p_charfail}
Let $(X,\mu)$ and $(Y,\nu)$ be standard measure spaces. The following are equivalent:

(i) \ there exists an $\omega$-closed subset of $X\times Y$ that fails compact operator synthesis;

(ii) the measures $\mu$ and $\nu$ are not atomic.
\end{proposition}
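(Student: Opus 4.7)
The plan is to treat the two implications separately, splitting according to whether one of $\mu$, $\nu$ is purely atomic. For \textbf{(i) $\Rightarrow$ (ii)} I prove the contrapositive: if, say, $\mu$ is purely atomic, then $X$ is essentially countable and $L^{2}(X,\mu)\cong\ell^{2}(X)$. For an $\omega$-closed $\kappa\subseteq X\times Y$, write $\kappa_{x}=\{y:(x,y)\in\kappa\}$; by $\omega$-closedness each slice is measurable. Any $T\in\mathfrak{M}_{\max}(\kappa)$ satisfies $\supp(Te_{x})\subseteq\kappa_{x}$ and admits a weak$^{\ast}$-convergent decomposition $T=\sum_{x}(Te_{x})\otimes e_{x}^{\ast}$. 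Using the kernel formula for the duality and the characterisation $\mathfrak{M}_{\min}(\kappa)=\Phi(\kappa)^{\perp}$ from (\ref{eq_minmax}), I would verify that each rank-one summand $(Te_{x})\otimes e_{x}^{\ast}$ annihilates every $h\in\Phi(\kappa)$, since its kernel is concentrated on $\{x\}\times\kappa_{x}\subseteq\kappa$. Weak$^{\ast}$-closedness of $\mathfrak{M}_{\min}(\kappa)$ then forces $T\in\mathfrak{M}_{\min}(\kappa)$, so $\kappa$ is a set of operator synthesis, a fortiori of compact operator synthesis.

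For \textbf{(ii) $\Rightarrow$ (i)} I would transport a counterexample from the torus. The opening paragraph of the proof of Theorem~\ref{th_every} produces a closed set $E\subseteq\mathbb{T}$ failing reduced spectral synthesis, and Theorem~\ref{ess-comp} then shows that $E^{\ast}=\{(s,t)\in\mathbb{T}^{2}:ts^{-1}\in E\}$ fails compact operator synthesis in $\mathbb{T}\times\mathbb{T}$. Since neither $\mu$ nor $\nu$ is purely atomic, choose measurable sets $X_{0}\subseteq X$, $Y_{0}\subseteq Y$ of positive finite measure on which the restricted measures are non-atomic; standard measure theory provides Borel isomorphisms $\alpha:X_{0}\to\mathbb{T}$ and $\beta:Y_{0}\to\mathbb{T}$ whose push-forwards are scalar multiples of Haar measure, in particular absolutely continuous. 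Set $\kappa_{0}=(\alpha\times\beta)^{-1}(E^{\ast})\subseteq X_{0}\times Y_{0}$.

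The crucial step is to invoke the Inverse Image Theorem~\ref{pr_invim} with the inverted maps: take $X=\mathbb{T}=Y$, $X_{1}=X_{0}$, $Y_{1}=Y_{0}$, $\varphi=\alpha^{-1}$, $\psi=\beta^{-1}$ and $\kappa_{1}=\kappa_{0}$; then the preimage set produced is precisely $E^{\ast}$, while the hypotheses of the theorem hold because $\alpha^{-1}$ and $\beta^{-1}$ are bijective up to null sets (so conditions (i) and (ii) are trivially satisfied with no constant partition pieces) and have absolutely continuous push-forwards. The conclusion of the theorem that compact operator synthesis of $\kappa_{0}$ would force that of $E^{\ast}$ gives, contrapositively, that $\kappa_{0}$ fails compact operator synthesis in $X_{0}\times Y_{0}$. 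Finally, viewing $\kappa_{0}$ as an $\omega$-closed subset of $X\times Y$, the containment $\kappa_{0}\subseteq X_{0}\times Y_{0}$ forces every $S\in\mathfrak{M}_{\max}(\kappa_{0})$ (computed in $X\times Y$) to satisfy $S=P(Y_{0})SP(X_{0})$; extending elements of $\mathcal{T}(X_{0},Y_{0})$ by zero inside $\mathcal{T}(X,Y)$ in the characterisation (\ref{eq_minmax}), the same identification holds for $\mathfrak{M}_{\min}(\kappa_{0})$, so the failure of compact operator synthesis is inherited by $\kappa_{0}$ regarded as a subset of $X\times Y$.

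The principal technical obstacle is the direction in which Theorem~\ref{pr_invim} works: it transports synthesis rather than its failure, so the inversion of $\alpha$ and $\beta$ is essential, and it is only legitimised by the non-atomicity hypothesis, which guarantees the required measure-theoretic bijectivity up to null sets.
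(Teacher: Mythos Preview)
Your proposal is correct and follows essentially the same route as the paper. For (i)$\Rightarrow$(ii) both arguments decompose $T\in\mathfrak{M}_{\max}(\kappa)$ along the atomic basis and observe that each rank-one summand lies in $\mathfrak{M}_{\min}(\kappa)$ (the paper cites \cite{arv}, you verify it directly via $\Phi(\kappa)^{\perp}$); for (ii)$\Rightarrow$(i) both transport the counterexample $E^{*}\subseteq\mathbb{T}\times\mathbb{T}$ through Borel isomorphisms and invoke Theorem~\ref{pr_invim}. Your observation that the Inverse Image Theorem transfers synthesis rather than its failure, so that one must apply it to the \emph{inverse} maps $\alpha^{-1},\beta^{-1}$, is a point the paper's proof leaves implicit when it simply writes ``By Theorem~\ref{pr_invim}, the set $\kappa$ \ldots\ fails compact operator synthesis''---your formulation makes clear why the bi-null-preserving property of the isomorphisms (guaranteed by \cite[Theorem 17.41]{kechris}) is what is actually being used.
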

\begin{proof}
(ii)$\Rightarrow$(i) Let $X_0\subseteq X$ (resp. $Y_0 \subseteq Y$) be a
(measurable) subset with the property that
$\mu|_{X_0}$ (resp. $\nu|_{Y_0}$) is continuous, while
$\mu|_{X_0^c}$ (resp. $\nu|_{Y_0^c}$) is atomic.
There exists a Borel isomorphism $\nph : X_0 \to \bb{T}$
(resp. $\psi : Y_0 \to \bb{T}$) \cite[Theorem 17.41]{kechris},
null-preserving in both directions.
By Theorem \ref{th_every}, there exists a subset $E\subseteq \bb{T}$
that fails reduced spectral synthesis. By Theorem \ref{ess-comp}, the subset $E^*$ of $\bb{T}\times\bb{T}$ fails compact operator synthesis.
By Theorem \ref{pr_invim}, the set
$$\kappa = \{(x,y)\in X_0 \times Y_0 : (\nph(x),\psi(y))\in E^*\}$$
fails compact operator synthesis.
It is now easy to see that $\kappa$, when considered as a subset of $X\times Y$, is not a set of compact operator synthesis.

(i)$\Rightarrow$(ii)
Suppose that $\mu$ is atomic.
Without loss of generality, we may assume that $X = \bb{N}$, equipped with counting measure.
We write $\{e_k\}_{k\in \bb{N}}$
for the canonical orthonormal basis of $\ell^2 = H_1$.
Let $\kappa \subseteq X\times Y$ be
$\omega$-closed and write $\kappa = \cup_{k\in \bb{N}} \{k\}\times Y_k$, where $Y_k\subseteq Y$ are measurable.
Let $P_k$ be the projection with range $\bb{C} e_k$ and $Q_k = P(Y_k)$, $k\in \bb{N}$.
Suppose that $T\in \frak{M}_{\max}(\kappa)$. Then $T = $ w$^*$-$\lim_{n\to\infty} \sum_{k=1}^n Q_k T P_k$, and
$Q_kTP_k\in \frak{M}_{\max}(\kappa)$. Since $Q_kTP_k$ has rank one, we have by \cite{arv} that $Q_kTP_k\in \frak{M}_{\min}(\kappa)$. It follows that $T\in \frak{M}_{\min}(\kappa)$. Thus, $\kappa$ is a set of operator synthesis, and hence a set of compact operator synthesis.
\end{proof}

As noted in Remark (ii) before Proposition \ref{p_bo}, every set of operator uniqueness
satisfies compact operator synthesis.
The converse is not true in general; for example, subsets with
non-empty $\omega$-interior are necessarily sets of operator multiplicity,
although they may satisfy compact operator synthesis
(as follows, from example, from \cite{froelich} and \cite[Corollary, p. 498]{arv}).
In the next proposition we show that this may hold even for sets with empty $\omega$-interior.

\begin{proposition}\label{p_cstb}
There exists an $\omega$-closed set with empty $\omega$-interior that is a
set of operator multiplicity and satisfies compact operator synthesis.
\end{proposition}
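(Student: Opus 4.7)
The plan is to take $\kappa = E^{\ast}$ where $E = S^2 \subseteq \bb{R}^3$ is the Euclidean unit sphere in three-dimensional space. Varopoulos's theorem, cited just before Proposition \ref{noness}, asserts that $S^2$ is a set of reduced spectral synthesis in $\bb{R}^3$, hence in particular a set of reduced local spectral synthesis. By Theorem \ref{ess-comp}, the subset $E^{\ast}$ of $\bb{R}^3 \times \bb{R}^3$ satisfies compact operator synthesis.

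To verify that $E^{\ast}$ has (marginally) empty $\omega$-interior, suppose towards a contradiction that some rectangle $\alpha \times \beta$ of positive product Lebesgue measure is marginally contained in $E^{\ast}$. By Fubini's theorem, one may pick $s_0 \in \alpha$ for which $t - s_0 \in S^2$ holds for almost every $t \in \beta$; this forces $\beta \subseteq s_0 + S^2$ up to a null set, which is absurd since $\beta$ has positive measure while $S^2$ has Lebesgue measure zero. Hence $\mathrm{int}_{\omega}(E^{\ast})$ is marginally null.

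It remains to exhibit a non-zero compact operator in $\mathfrak{M}_{\max}(E^{\ast})$. Let $\sigma$ denote the surface measure on $S^2$. Specialising the computation in the proof of Proposition \ref{noness} to $n = 3$ gives $\hat\sigma(t) = A|t|^{-1/2} J_{1/2}(|t|)$, which by (\ref{eq_Bess}) lies in $C_0(\bb{R}^3)$; thus $\lambda(\sigma)$ is a non-zero element of $C_r^{\ast}(\bb{R}^3) \cap J(S^2)^{\perp}$ and, by \cite[Lemma 4.8]{gralmul}, belongs to $\mathfrak{M}_{\max}(E^{\ast})$. Arguing as in the second half of the proof of Theorem \ref{ess-comp}, for each compact $K \subseteq \bb{R}^3$ the operator $P(K)\lambda(\sigma)P(K)$ is a norm limit of Hilbert--Schmidt operators of the form $P(K)\lambda(f)P(K)$ with $f \in L^1(\bb{R}^3) \cap C_c(\bb{R}^3)$ and is therefore compact; still supported by $E^{\ast}$, it is non-zero for $K$ large enough because $P(K) \to I$ strongly. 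No substantial difficulty arises: all three ingredients are either quoted or recovered by re-running earlier computations, and the only genuinely delicate point is the choice of dimension, since the same recipe would fail for $n \geq 4$ by Proposition \ref{noness}, so Varopoulos's three-dimensional theorem is the crucial input.
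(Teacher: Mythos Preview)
Your proof is correct and uses the same example $(S^2)^*$ and the same transference via Theorem~\ref{ess-comp} as the paper. The only differences are that where the paper cites \cite[Theorem 4.9]{gralmul} for operator multiplicity and \cite[Theorem 5.2]{llt} for the empty $\omega$-interior, you instead verify both facts directly (exhibiting $P(K)\lambda(\sigma)P(K)$ and running the elementary measure-zero-section argument), which is a harmless and self-contained unpacking of those references.
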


\begin{proof}
By \cite{varopoulos}, the sphere $S^2\subseteq \mathbb R^3$ is a set of reduced spectral synthesis
that is also a set of multiplicity \cite{varopoulos}.
By \cite[Theorem 4.9]{gralmul} and Theorem \ref{ess-comp},
$$\left(S^2\right)^* \stackrel{def}{=} \left\{(x,y)\in \bb{R}^3\times \bb{R}^3 : y - x \in S^2\right\}$$
is a set of compact operator synthesis which is a set of operator multiplicity.
By \cite[Theorem 5.2]{llt}, the $\omega$-interior of $\left(S^2\right)^*$ coincides with
$\left({\rm int}(S^2)\right)^*$, and is hence empty.
\end{proof}


\section{Applications to operator equations}\label{s_aoe}

In this section, we apply our results to problems about
compact solutions of operator equations.
Our starting point is the well-known Fuglede-Putnam theorem which
states that, if $N_1$ and $N_2$ are normal operators acting on a Hilbert space $H$,
then the equations
$$N_1T-TN_2=0\text{ and } N_1^*T-TN_2^*,$$
with respect to the operator variable $T\in \cl B(H)$, are equivalent.
Let $\{A_i\}_{i=1}^m$ and $\{B_i\}_{i=1}^m$ be commutative families of normal operators in $\cl B(H)$.
A natural question that arises is whether the equations
$\sum_{i=1}^m A_i$ $T B_i = 0$ and  $\sum_{i=1}^m A_i^* T B_i^*=0$ are equivalent.
It is clear that the Fuglede-Putnam theorem gives an affirmative answer to this question, in the special case where
$m = 2$, $A_1 = N_1$, $A_2 = I$, $B_1 = I$ and $B_2 = N_2$.

Letting $\Delta$ and $\tilde\Delta$ denote the elementary operators on $\cl B(H)$ given by
$$\Delta(T) = \sum_{i=1}^m A_iT B_i \ \mbox{ and } \ \tilde\Delta(T) = \sum_{i=1}^m A_i^* T B_i^*,$$
the question formulated in the previous paragraph is equivalent to whether the equality
\begin{equation}\label{kernel}
\ker\Delta = \ker\tilde\Delta
\end{equation}
holds.
Note that $\ker\Delta|_{\cl C_2} = \ker\tilde\Delta|_{\cl C_2}$, where $\Delta|_{\cl C_2}$
(resp. $\tilde{\Delta}|_{\cl C_2}$) is the restriction of $\Delta$ (resp. $\tilde{\Delta}$)
to the class $\cl C_2$ of Hilbert-Schmidt operators on $H$.
Indeed, $\Delta|_{\cl C_2}$ is a normal operator on the Hilbert space $\cl C_2$ and its adjoint coincides with $\tilde\Delta|_{\cl C_2}$.
In particular, identity
(\ref{kernel}) holds in the case  $H$ is finite-dimensional.
However, if the space $H$ is infinite-dimensional,
(\ref{kernel}) fails and an example, based on a modification of the Schwartz example \cite{Schwartz},
was constructed by the first author in \cite{sh_example}. Using Proposition \ref{noness},
we will show that
(\ref{kernel}) is not valid when instead of $\Delta$ and $\tilde{\Delta}$ we consider their restrictions
to the space of compact operators as well as to the Schatten classes $\cl C_p$ for $p > 2$.

We note that here we are interested in elementary operators;
examples of operators {\it of infinite length} $\Delta: T \to\sum_{i=1}^\infty A_i T B_i$ with
$\ker\Delta|_{\cl C_p}\ne\ker\tilde\Delta|_{\cl C_p}$, $p>2$, were constructed in \cite{st2} using deep results from harmonic analysis.

In what follows, we write $\|T\|_p$ for the Schatten $p$-norm  of an element $T\in\cl C_p(H)$, $1\leq p<\infty$.
We start with a couple of lemmas. Recall that $\mathcal F$ stands for the Fourier transform on
$\mathbb R^n$; thus,  for a measure $\mu\in M(\bb{R}^n)$, the function $\mathcal F(\mu) : \bb{R}^n\to \bb{C}$ is given by
$$\mathcal F(\mu)(x) = \int_{\mathbb R^n} e^{-it\cdot x}d\mu(t), \ \ \ x\in \bb{R}^n.$$
We consider $L^1(\mathbb R^n)$ sitting canonically as an ideal in $M(\bb{R}^n)$.
If $G$ is a locally compact second countable group and $\sigma \in M(G)$, we denote by
$\lambda({\sigma})$
the convolution operator of the measure $\sigma$ \cite[Section 2.5]{Folland_HA},
acting on $L^2(G)$ and given by
$\lambda({\sigma})(\xi) = \sigma\ast\xi$, $\xi\in L^2(G)$.
We note the identity
\begin{equation}\label{eq_fourmeas}
\cl F^{-1} M_{\cl F(\sigma)}\cl F = \lambda({\sigma}), \ \ \ \sigma\in M(\bb{R}^n).
\end{equation}
We also let $L_b$ denote the convolution map $L_b(\xi)=b\ast\xi$, $b\in L^2(\mathbb R^n)$, $\xi\in L^2(\mathbb R^n)$;
note that $L_b(\xi) \in A(\bb{R}^n)$ for all $b,\xi\in L^2(\bb{R}^n)$.

\begin{lemma}\label{inter}
Let $2\leq p\leq +\infty$ and $b\in C_c(\mathbb R^n)$.
If $\psi\in L^p(\mathbb R^n)\cap L^\infty(\mathbb R^n)$ then
the operator $\mathcal F^{-1}M_{\psi}\mathcal F M_b$ lies in $\mathcal C_p$.
Moreover, there exists
$C(b,p) > 0$ such that
$$\left\|\mathcal F^{-1}M_{\psi}\mathcal F M_b\right\|_p\leq C(b,p) \left\|\psi\right\|_p, \ \ \ \psi\in L^p(\mathbb R^n)\cap L^\infty(\mathbb R^n).$$
\end{lemma}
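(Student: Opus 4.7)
The plan is to realise $T_\psi := \cl F^{-1} M_\psi \cl F M_b$ as an integral operator, handle the endpoints $p=2$ and $p=\infty$ directly, and then fill in the range $2<p<\infty$ by complex interpolation of Schatten classes.

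First I would use the identity $\cl F^{-1}M_\psi \cl F = \lambda(\check\psi)$ in the spirit of (\ref{eq_fourmeas}) — valid on $C_c(\bb{R}^n)$ when $\psi\in L^2\cap L^\infty$, and extendable by density — to write, up to the standard Fourier normalisation constant,
\[
(T_\psi f)(x) = \int_{\bb{R}^n} \check\psi(x-y)\,b(y)\,f(y)\,dy.
\]
Thus $T_\psi$ is the integral operator with kernel $K_\psi(x,y) = c_n\,\check\psi(x-y)\,b(y)$. Two things are now immediate. For $p=2$, Plancherel gives
\[
\|T_\psi\|_2^2 \;=\; \|K_\psi\|_{L^2(\bb{R}^n\times\bb{R}^n)}^2 \;=\; |c_n|^2 \|\check\psi\|_2^2\,\|b\|_2^2 \;=\; |c_n|^2 \|\psi\|_2^2\,\|b\|_2^2,
\]
so $T_\psi\in \cl C_2$ with $\|T_\psi\|_2 \leq C(b,2)\|\psi\|_2$, where $C(b,2) = |c_n|\|b\|_2$. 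For $p=\infty$, since $\cl F$ is unitary on $L^2(\bb{R}^n)$ and $M_\psi$, $M_b$ are bounded multiplication operators,
\[
\|T_\psi\|_{\cl B(L^2(\bb{R}^n))} \;\leq\; \|\psi\|_\infty\,\|b\|_\infty.
\]

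To interpolate between these endpoints I would view $\psi\mapsto T_\psi$ as a linear map and apply the complex interpolation theorem in the Calder\'on--Lions form for Schatten classes: since $[\cl C_2,\cl B(L^2)]_\theta = \cl C_{p}$ with $1/p = (1-\theta)/2$, and $[L^2,L^\infty]_\theta = L^{p}$ with the same $\theta$, the bounded maps $\psi\mapsto T_\psi$ from $L^2$ to $\cl C_2$ and from $L^\infty$ to $\cl B(L^2)$ interpolate to a bounded map from $L^p$ to $\cl C_p$ for every $p\in[2,\infty]$, with
\[
\|T_\psi\|_p \;\leq\; \bigl(|c_n|\|b\|_2\bigr)^{2/p}\,\|b\|_\infty^{1-2/p}\,\|\psi\|_p.
\]
Setting $C(b,p) := (|c_n|\|b\|_2)^{2/p}\|b\|_\infty^{1-2/p}$ (finite since $b\in C_c(\bb{R}^n)$) yields the required inequality.

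The step I expect to be delicate is the rigorous setup of the complex interpolation: one must verify analyticity of an auxiliary family $z\mapsto T_{\psi_z}$ for a suitable decomposition $\psi = \psi_z$ (for instance by writing $\psi = |\psi|^{p\theta}\mathrm{sgn}(\psi)$ and considering $\psi_z = |\psi|^{pz}\mathrm{sgn}(\psi)$ on compactly supported $\psi$), establish the endpoint bounds uniformly on the strip, and then pass to general $\psi\in L^p\cap L^\infty$ by density of simple functions. Once this machinery is in place the bound follows at once, and the resulting operator lies in $\cl C_p$ because it is a limit in the $\cl C_p$-norm of operators known to be in $\cl C_p$ (indeed, already in $\cl C_2$) on a dense subset of inputs.
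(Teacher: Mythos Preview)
Your proof is correct and essentially identical to the paper's: both identify the operator with the integral kernel $(\cl F^{-1}\psi)(x-y)b(y)$, bound the Hilbert--Schmidt norm by $\|\psi\|_2\|b\|_2$ and the operator norm by $\|\psi\|_\infty\|b\|_\infty$, and then interpolate. One remark: your worry in the last paragraph about building an analytic family $\psi_z$ is unnecessary --- since $\psi\mapsto T_\psi$ is genuinely \emph{linear}, the abstract interpolation theorem for linear maps between compatible couples (as in \cite{berg-lofstrom, pietsch-triebel}) applies directly, and the only thing one must check (as the paper does explicitly) is that the two endpoint definitions of $T_\psi$ agree on $L^2\cap L^\infty$ so that the map is well-defined on $L^2+L^\infty$.
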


\begin{proof}
We use complex interpolation.  Write $\cl B=\cl B(L^2(\mathbb R^n))$ for brevity.
Recall \cite{berg-lofstrom, pietsch-triebel} that
$(L^2(\mathbb R^n), L^\infty(\mathbb R^n))$ and $(\mathcal C_2,\mathcal B)$ are compatible pairs and $L^p(\mathbb R^n)$ and $\mathcal C_p$
coincide with the interpolation spaces corresponding to the same value  $\theta = p$ of the interpolation parameter.
Let
$$T : L^2(\mathbb R^n)+L^\infty(\mathbb R^n)\to \mathcal B$$
be the linear operator defined by letting
$$
T(\psi) =
  \begin{cases}
    \mathcal F^{-1}M_{\psi}\mathcal F M_b, & \mbox{if } \psi\in L^\infty(\mathbb R^n),\\
    L_{\mathcal F^{-1}(\psi)}M_b, & \mbox{if } \psi\in L^2(\mathbb R^n).
  \end{cases}
$$
 We note first that if $\psi\in L^2(\mathbb R^n)$ then $L_{\mathcal F^{-1}(\psi)}M_b$
 is a Hilbert-Schmidt, and hence a bounded, operator on $L^2(\mathbb R^n)$.
 In fact, it is an  integral operator with square integrable integral
kernel $k$ given by
$$k(x,y) = (\cl F^{-1}(\psi))(x-y)b(y)$$
and hence
\begin{equation}\label{eq_hsn}
\|L_{\mathcal F^{-1}(\psi)}M_b\|_2=\|k\|_2\leq \left\|\psi\right\|_2 \left\|b\right\|_2.
\end{equation}

We claim that the mapping $T$ is well-defined.
To see this, we need to show that, for
 $\psi\in L^2(\mathbb R^n)\cap L^\infty(\mathbb R^n)$ and $\xi,\eta\in L^2(\bb{R}^n)$, we have
\begin{equation}\label{equa}
(\cl F^{-1}M_\psi\cl FM_b\xi,\eta)=(L_{\cl F^{-1}(\psi)} M_b\xi,\eta).
\end{equation}
Since both operators are bounded,
it suffices to prove (\ref{equa}) for all $\xi \in L^2(\bb{R}^n)$ and all
$\eta$ in the dense subspace $\cl L =  L^2(\mathbb R^n)\cap L^1(\mathbb R^n)$ of $L^2(\mathbb R^n)$.
Note that
\begin{eqnarray*}
|(\cl F^{-1}M_{\psi}\cl FM_b\xi,\eta)|
& = & |(M_{\psi}\cl F (b\xi),\cl F(\eta))|\\
& \le &
 \int_{\mathbb R^n}|\psi(x)||\cl F(b\xi)(x)||\cl F(\eta)(x)|dx\\
& \le &
\|\psi\|_2\|\cl F(b\xi)\cl F(\eta)\|_2
\le  \|\psi\|_2\|\cl F(b\xi)\|_2\|\cl F(\eta)\|_\infty.
\end{eqnarray*}
Now assuming that $\eta \in \cl L$ we see that both sides of (\ref{equa})
are continuous as functions of $\psi \in L^2(\mathbb R^n)$.

Set $\cl M = \cl F(L^2(\mathbb R^n)\cap L^1(\mathbb R^n) )$ and note that $\cl M$ is dense in
$L^2(\mathbb R^n)\cap L^{\infty}(\mathbb R^n)$ in $\|\cdot\|_2$.
If $\psi \in \cl M$ then it is the Fourier image of a finite measure so
$\cl F^{-1}M_{\psi}\cl F = L_{\cl F^{-1}(\psi)}$ by (\ref{eq_fourmeas}). Thus the equality (\ref{equa}) is proved.

Note that,
if $\psi\in L^\infty(\mathbb R^n)$ then $T(\psi)\in \cl B$ and $\|T(\psi)\|\leq\|\psi\|_\infty\|b\|_\infty$.
The statement now follows from  (\ref{eq_hsn}) by complex interpolation.
\end{proof}

\begin{lemma}\label{l_sonm}
Let $G$ be a second countable locally compact group, $\mu\in M(G)$ and $u\in A(G)$.
Then $S_{N(u)}(\lambda(\mu)) = \lambda(u\mu)$.
\end{lemma}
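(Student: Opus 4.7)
My plan is to verify the identity pointwise against a norm-dense family of test elements in ${\cl T}(G,G)$, namely the elementary tensors $\xi\otimes\bar\eta$ with $\xi,\eta\in L^2(G)$. Since both sides of the claimed equality lie in $\cl B(L^2(G))$ and the linear span of such rank-one tensors is norm-dense in ${\cl T}(G,G)$, checking the pairing $\langle\,\cdot\,,\xi\otimes\bar\eta\rangle$ on them will suffice.

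First I would fix a matrix-coefficient representation $u(s)=(\lambda_s\xi_0,\eta_0)$ with $\xi_0,\eta_0\in L^2(G)$, which is possible by the definition of $A(G)$. A change of variables using left invariance of Haar measure gives
$$N(u)(x,y)=u(yx^{-1})=\int_G \xi_0(xr)\overline{\eta_0(yr)}\,dr,$$
and hence, setting $A_r(x)=\xi(x)\xi_0(xr)$ and $B_r(y)=\eta(y)\eta_0(yr)$,
$$N(u)(\xi\otimes\bar\eta)(x,y)=\int_G A_r(x)\overline{B_r(y)}\,dr.$$

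Using the defining property of $S_{N(u)}$, I would then compute
\begin{eqnarray*}
\langle S_{N(u)}(\lambda(\mu)),\xi\otimes\bar\eta\rangle
&=&\langle\lambda(\mu),N(u)(\xi\otimes\bar\eta)\rangle\\
&=&\int_G\int_G\int_G \xi(s^{-1}y)\xi_0(s^{-1}yr)\overline{\eta(y)\eta_0(yr)}\,dy\,d\mu(s)\,dr,
\end{eqnarray*}
after expanding $(\lambda(\mu)A_r,B_r)$ via $(\mu\ast A_r)(y)=\int A_r(s^{-1}y)\,d\mu(s)$. An application of Fubini followed by the substitution $t=yr$ in the inner integral (so that $dr=dt$ by left invariance) lets the $r$- and $y$-integrations decouple into the two matrix coefficients $u(s)=(\lambda_s\xi_0,\eta_0)$ and $v(s):=(\lambda_s\xi,\eta)$; the desired identity would then follow from
$$\int_G u(s)v(s)\,d\mu(s)=\int_G v\,d(u\mu)=(\lambda(u\mu)\xi,\eta)=\langle\lambda(u\mu),\xi\otimes\bar\eta\rangle.$$

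The main technical obstacle is justifying the Fubini interchange. I would handle it by first proving the identity assuming $\xi,\eta,\xi_0,\eta_0\in C_c(G)$, where every integrand is bounded and compactly supported so that absolute convergence and Fubini are immediate. The general case then follows by continuity, as both sides of the asserted identity are bounded bilinear forms in $(\xi,\eta)\in L^2(G)\times L^2(G)$, controlled by $\|\xi\|_2\|\eta\|_2\|u\|_{A(G)}\|\mu\|$, with $C_c(G)$ dense in $L^2(G)$; analogously, the matrix-coefficient representation of $u$ may be approximated by compactly supported vectors.
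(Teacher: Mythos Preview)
Your argument is correct, but it takes a different and considerably longer route than the paper. The paper works entirely inside the $\vn(G)$--$A(G)$ duality: since $\lambda(\mu)\in\vn(G)$ and $S_{N(u)}$ restricted to $\vn(G)$ coincides with the $A(G)$-module action $T\mapsto u\cdot T$, one has for every $v\in A(G)$
\[
\langle S_{N(u)}(\lambda(\mu)),v\rangle=\langle\lambda(\mu),uv\rangle=\int_G uv\,d\mu=\langle\lambda(u\mu),v\rangle,
\]
which finishes the proof in one line. Your approach instead stays at the $\cl B(L^2(G))$--${\cl T}(G)$ level, tests against elementary tensors $\xi\otimes\bar\eta$, and unpacks $N(u)$ into an integral of rank-one functions, thereby essentially re-deriving the module-action identity for $T=\lambda(\mu)$ by hand. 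The payoff of your route is that it is self-contained (no appeal to the Bo\.{z}ejko--Fendler-type fact that $S_{N(u)}|_{\vn(G)}=u\cdot(\cdot)$); the cost is the Fubini bookkeeping and the $C_c(G)$ approximation step. The paper's route is shorter and more conceptual but relies on that background identity being available.
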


\begin{proof}
Suppose that $\mu\in M(G)$ and $u,v \in A(G)$. Then
$$\langle S_{N(u)}(\lambda(\sigma)),v\rangle
= \langle\lambda(\sigma), uv\rangle=\int_G uvd\sigma = \langle\lambda(u\sigma),v\rangle.$$
\end{proof}

In the next lemma, we consider the elements of $M(\bb{R}^n)$ as distributions.
Further, for a given pseudomeasure $Q$ on $\bb{R}^n$, we write $\cl F(Q)$ for the Fourier transform of $Q$.

\begin{lemma}\label{spoper}
Let $\mu$ be the normalised surface area measure on the sphere
$S^{n-1}\subseteq\mathbb R^n$, $Q=\frac{\partial\mu}{\partial x_1}$, $b\in C_c(\mathbb R^n)$ and  $q\in C^\infty(\mathbb R^n)$.
Then
\begin{itemize}
\item[(i)] $\cl F^{-1}M_{\cl F(q\mu)}\cl F M_b\in\cl C_p$, whenever $n > p/(p-2)$;
\item[(ii)] $\cl F^{-1}M_{\cl F(Q)}\cl FM_b\in\cl C_p$, whenever $n > 3p/(p-2)$.
\end{itemize}
\end{lemma}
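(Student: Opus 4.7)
The plan is to reduce both statements directly to Lemma~\ref{inter}: once we verify that $\cl F(q\mu)$ (respectively $\cl F(Q)$) lies in $L^p(\mathbb R^n)\cap L^\infty(\mathbb R^n)$ under the stated dimensional hypothesis, the conclusion is immediate. Everything therefore boils down to proving two pointwise estimates together with the correct integrability.

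For (i), I would first note that $q\mu$ is a compactly supported finite complex Borel measure on $\mathbb R^n$ (its total variation is bounded by $\|q|_{S^{n-1}}\|_\infty$), so its Fourier transform is continuous and bounded, giving $\cl F(q\mu)\in L^\infty(\mathbb R^n)$ automatically. The main work is to establish the decay
\[
|\cl F(q\mu)(t)| \leq C_q\,|t|^{-(n-1)/2}, \qquad |t|\geq 1.
\]
For the pure surface measure $\mu$ this is already recorded in the proof of Proposition~\ref{noness} via the Bessel representation $\hat\mu(t)=A|t|^{-(n-2)/2}J_{(n-2)/2}(|t|)$ combined with the bound (\ref{eq_Bess}). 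To extend it to a smooth density $q$, I would apply the standard stationary phase principle to the oscillatory integral $\int_{S^{n-1}} e^{-it\cdot x} q(x)\,d\mu(x)$: writing $t=|t|\omega$ with $\omega\in S^{n-1}$, the only critical points of the phase $x\mapsto -\omega\cdot x$ on $S^{n-1}$ are the antipodal points $\pm\omega$, both non-degenerate, each contributing the standard $|t|^{-(n-1)/2}$ rate (an equivalent route is to expand $q|_{S^{n-1}}$ into spherical harmonics and use the Funk--Hecke formula, reducing to Bessel functions of various orders). Given this decay, the condition $\cl F(q\mu)\in L^p$ becomes $\int_1^\infty r^{-(n-1)p/2}r^{n-1}dr<\infty$, i.e.\ $\tfrac{n-1}{2}p>n$, which is exactly $n>p/(p-2)$.

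For (ii), I would use the identity $\cl F(Q)(t)=it_1\cl F(\mu)(t)$ already recorded in the proof of Proposition~\ref{noness}. Combined with the Bessel bound for $\hat\mu$, this yields $|\cl F(Q)(t)|\leq|t|\cdot O(|t|^{-(n-1)/2})=O(|t|^{-(n-3)/2})$ as $|t|\to\infty$. Since $\mu$ has compact support, $\cl F(\mu)$ is real-analytic, so $\cl F(Q)=it_1\cl F(\mu)$ is bounded on any compact set; under the hypothesis $n>3p/(p-2)\geq 3$ the decay at infinity is also bounded, so $\cl F(Q)\in L^\infty(\mathbb R^n)$. The $L^p$ integrability at infinity requires $\tfrac{n-3}{2}p>n$, equivalent to $n>3p/(p-2)$. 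Applying Lemma~\ref{inter} to $\psi=\cl F(Q)$ finishes the proof.

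The main obstacle is upgrading the classical $|t|^{-(n-1)/2}$ decay of $\hat\mu$ to the analogous decay for $\cl F(q\mu)$ uniformly in a controlled (semi)norm of $q$; this is a standard stationary-phase calculation but must be handled carefully enough to produce a genuine pointwise bound rather than merely $L^2$-level information. Once this is in hand, part (ii) follows from (i) upon recognising that Fourier differentiation introduces only one extra power of $|t|$, which costs two dimensions of integrability, accounting precisely for the jump from $p/(p-2)$ to $3p/(p-2)$.
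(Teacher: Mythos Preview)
Your proposal is correct, and for part (ii) it matches the paper's argument exactly. For part (i), however, you take a genuinely different route.

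The paper does \emph{not} establish the decay $|\cl F(q\mu)(t)|\le C_q|t|^{-(n-1)/2}$ for general smooth $q$. Instead it sidesteps this entirely via an operator-theoretic reduction: pick $a\in C_c^\infty(\mathbb R^n)$ with $a=1$ on $S^{n-1}$, so that $q\mu=qa\mu$ with $qa\in A(\mathbb R^n)$; then by (\ref{eq_fourmeas}) and Lemma~\ref{l_sonm},
\[
\cl F^{-1}M_{\cl F(q\mu)}\cl F M_b = S_{N(qa)}\bigl(\cl F^{-1}M_{\cl F(\mu)}\cl F M_b\bigr).
\]
Since Schur multipliers leave each $\cl C_p$ invariant (by interpolation between $\cl C_1$, $\cl C_2$ and $\cl B$), it suffices to prove the result for $q\equiv 1$, where the Bessel formula gives the decay of $\cl F(\mu)$ directly and Lemma~\ref{inter} applies.

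Your stationary-phase approach is perfectly valid---the decay of $\widehat{q\,d\sigma}$ for smooth amplitudes on a hypersurface of nonvanishing Gaussian curvature is classical---and has the advantage of being self-contained harmonic analysis. The paper's argument, on the other hand, avoids invoking stationary phase at the cost of appealing to the Schur-multiplier machinery already developed earlier in the paper; this keeps the analytic input minimal (only the explicit Bessel computation for $\mu$ itself) and is more in the spirit of the surrounding operator-algebraic framework. Both arrive at the same integrability threshold.
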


\begin{proof}
(i) If $a\in C_c^\infty(\mathbb R^n)$ then $qa\in C_c^\infty(\mathbb R^n)$.
If, in addition, $a=1$ on $S^{n-1}$, then  $qa\in  A(\mathbb R^n)$,
$$q\mu = qa\mu\in M(\mathbb R^{n}),$$ and hence
$q\mu \in {\rm PM}(\mathbb R^{n})$.
By (\ref{eq_fourmeas}) and Lemma \ref{l_sonm},
\begin{eqnarray*}
\cl F^{-1}M_{\cl F(q\mu)} \cl F M_b
& = &
\cl F^{-1}M_{\cl F(qa\mu)} \cl F M_b
=
\lambda(qa\mu) M_b\\
& = &
S_{N(qa)}(\lambda(\mu))M_b
= S_{N(qa)}(\cl F^{-1}M_{\cl F(\mu)}\cl FM_b).
\end{eqnarray*}
It follows from the definition of Schur multipliers that
every Schur multiplier is a bounded operator on $\cl C_2$ and $\cl C_\infty$, so by duality,
also bounded on $\cl C_1$. Using a complex interpolation argument, one can then easily show that
Schur multipliers leave the ideal $\cl C_p$ invariant.
It thus suffices to show that
$$\cl F^{-1} M_{\cl F(\mu)} \cl F M_b\in \cl C_p \ \mbox{ if } n > p/(p-2).$$
Since $\cl F(\mu)(x) = O\left(\frac{1}{|x|^{(n-1)/2}}\right)$ as $|x|\to \infty$
(see the proof of Proposition~\ref{noness}),
there exist positive reals $R$, $C$ and $D$
such that
\begin{eqnarray*}
\|\cl F(\mu)\|^p_p
& = &
\int_{|x| < R}|\cl F(\mu)(x)|^pdx+\int_{|x|\geq R}|\cl F(\mu)(x)|^pdx\\
& \leq &
D+C\int_{|x|\geq R}\frac{1}{|x|^{p(n-1)/2}}dx = D + C\int_R^\infty\frac{r^{n-1}}{r^{p(n-1)/2}}dr.
\end{eqnarray*}
If $n > p/(p-2)$ the last integral is convergent and hence
$\cl F(\mu)\in L^p(\mathbb R^n)$. By  Lemma \ref{inter}, $\cl F^{-1}M_{\cl F(\mu)}\cl F M_b\in \cl C_p$.

(ii) The statement can be shown using similar arguments and the fact that $Q$ is a pseudofunction with
$\cl F(Q)(x)=x_1\cl F(\mu)(x)=O\left(\frac{1}{|x|^{(n-3)/2}}\right)$ as $|x|\to\infty$.
\end{proof}

Recall \cite{ivan-luda-pmulti} that an $\omega$-closed set $\kappa\subseteq X\times Y$ is called a set
of \emph{operator $p$-multiplicity}
if $\mathfrak M_{\max}(\kappa)\cap\cl C_p\ne\{0\}$.
We say that  $\kappa\subseteq X\times Y$ is a set of \emph{operator $p$-synthesis}
if $\mathfrak M_{\max}(\kappa)\cap\cl C_p = \mathfrak M_{\min}(\kappa)\cap\cl C_p$.
Clearly every set of compact operator synthesis is a set of operator $p$-synthesis, for any $p \geq 1$.
In the next corollary, we use
Lemma \ref{spoper} to obtain a result on operator $p$-multiplicity and operator $p$-synthesis.

\begin{corollary}
Let $p > 2$. The following statements hold:

(i) \ If $n > p/(p-2)$ then $(S^{n-1})^*$ is a set of operator $p$-multiplicity
in $\bb{R}^n\times \bb{R}^n$;

(ii) If $n > 3p/(p-2)$ then $(S^{n-1})^*$ is not a set of operator $p$-synthesis in $\bb{R}^n\times \bb{R}^n$.
\end{corollary}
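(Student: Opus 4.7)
The idea is to interpret, for both parts, the operators furnished by Lemma \ref{spoper} as products $\lambda(P)M_b$, where $P$ is a pseudomeasure supported on $S^{n-1}$ and $b\in C_c(\bb R^n)$. Since the $\vn$-support of $\lambda(P)$ is then contained in $S^{n-1}$, \cite[Lemma 4.8]{gralmul} places $\lambda(P)$ in $\mathfrak M_{\max}((S^{n-1})^*)$, and the right masa-bimodule property pushes the product into $\mathfrak M_{\max}((S^{n-1})^*)$.

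For (i), fix $q\in C_c^\infty(\bb R^n)$ with $q\equiv 1$ on a neighbourhood of $S^{n-1}$ and choose $b\in C_c(\bb R^n)$ so that $M_{\cl F(\mu)}\cl F(b)\not\equiv 0$ in $L^2(\bb R^n)$ (possible since $\cl F(\mu)$ is non-trivial). Set $T=\cl F^{-1}M_{\cl F(q\mu)}\cl F M_b=\lambda(q\mu)M_b$. Lemma \ref{spoper}(i) gives $T\in\cl C_p$, and by construction $T\neq 0$. Since $q\mu$ is a non-zero measure supported on $S^{n-1}$, $\supp_{\vn}(\lambda(q\mu))\subseteq S^{n-1}$, so $\lambda(q\mu)\in\mathfrak M_{\max}((S^{n-1})^*)$, whence $T\in \mathfrak M_{\max}((S^{n-1})^*)\cap\cl C_p$, witnessing operator $p$-multiplicity.

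For (ii), let $Q=\partial\mu/\partial x_1$ be the pseudofunction and $u\in C_c^\infty(\bb R^n)\cap I(S^{n-1})$ be the function produced in the proof of Proposition \ref{noness}, so that $\supp_{\vn}(\lambda(Q))\subseteq S^{n-1}$ and $\langle\lambda(Q),u\rangle=\langle Q,u\rangle\neq 0$. Pick $b\in C_c^\infty(\bb R^n)$ with $b\equiv 1$ on a compact set $K$ to be fixed below, and set $T=\lambda(Q)M_b$. By Lemma \ref{spoper}(ii), $T\in \cl C_p$, and as in (i), $T\in\mathfrak M_{\max}((S^{n-1})^*)$. To defeat $p$-synthesis it suffices to produce a Schur multiplier $\theta$ vanishing on $(S^{n-1})^*$ with $S_\theta(T)\neq 0$. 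Take $\theta=N(u)$, which vanishes on $(S^{n-1})^*$ since $u$ vanishes on $S^{n-1}$. Using that Schur multipliers are $\cl D$-bimodule maps and the identity $S_{N(u)}(S)=u\cdot S$ for $S\in\vn(G)$ (exactly as exploited in the proof of Theorem \ref{ess-comp}),
\[
S_{N(u)}(T)=S_{N(u)}(\lambda(Q))M_b=(u\cdot\lambda(Q))M_b.
\]
Testing against any $v\in A(\bb R^n)$ with $v\equiv 1$ on $\supp(u)$ yields $\langle u\cdot\lambda(Q),v\rangle=\langle Q,uv\rangle=\langle Q,u\rangle\neq 0$, so $u\cdot\lambda(Q)\neq 0$ in $\vn(\bb R^n)$. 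Enlarging $K$ so that $(u\cdot\lambda(Q))P(K)\neq 0$ and using $M_bP(K)=P(K)$, we conclude $S_{N(u)}(T)\neq 0$, hence $T\notin\mathfrak M_{\min}((S^{n-1})^*)$.

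The only non-routine ingredient is the identification $S_{N(u)}(\lambda(Q))=u\cdot\lambda(Q)$: Lemma \ref{l_sonm} treats only measures, but the identity extends to arbitrary $S\in\vn(G)$ via the defining pairing $\langle S_{N(u)}(S),\varphi\rangle=\langle S,N(u)\varphi\rangle$, $\varphi\in\cl T(G)$, combined with the definition of the $A(G)$-module action on $\vn(G)$; the same reduction appears in the proof of Theorem \ref{ess-comp}, via the formula $P(K)(u\cdot S)P(K)=S_{N(u)}(P(K)SP(K))$.
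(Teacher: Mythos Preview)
Your proof is correct and follows essentially the same route as the paper's. Both parts identify the relevant operator as $\lambda(P)M_b$ with $P$ a pseudomeasure supported on $S^{n-1}$, invoke \cite[Lemma~4.8]{gralmul} to place it in $\mathfrak M_{\max}((S^{n-1})^*)$, and appeal to Lemma~\ref{spoper} for membership in $\cl C_p$; for (ii) both use the identity $S_{N(u)}(S)=u\cdot S$ on $\vn(\bb R^n)$ (which the paper invokes verbatim) together with $\langle Q,u\rangle\neq 0$ from Proposition~\ref{noness}. The only cosmetic difference is in the non-vanishing step of (ii): the paper pairs $T$ directly against $N(u)(\xi\otimes\eta)\in\Phi((S^{n-1})^*)$ and uses density of $\{\bar\eta\ast\check b\check\xi\}$ in $A(\bb R^n)$ to choose $\xi,\eta,b$, whereas you first show $u\cdot\lambda(Q)\neq 0$ and then enlarge $K$ (with $b\equiv 1$ on $K$) so that $(u\cdot\lambda(Q))M_b\neq 0$; these are equivalent formulations of the same computation.
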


\begin{proof}
(i) Let $\mu$ be the normalised surface measure on $S^{n-1}$.
By (\ref{eq_fourmeas}),
$$\supp\mbox{}_{{\rm VN}}\left(\mathcal F^{-1}M_{\cl F(\mu)}\cl F\right) = \supp(\mu) = S^{n-1}.$$
By \cite[Lemma 4.8]{gralmul},
$\supp\left(\mathcal F^{-1}M_{\cl F(\mu)}\cl F\right) \subseteq (S^{n-1})^*$.
The statement now follows from Lemma \ref{spoper} (i).

(ii)
Let $Q = \frac{\partial\mu}{\partial x_1}$, viewed as a pseudomeasure as in the proof of Proposition \ref{noness},
$b\in C_c(\mathbb R^n)$ and $T=\mathcal F^{-1}M_{\cl F(Q)}\cl F M_b$.
By Lemma \ref{spoper}, $T\in \cl C_p$ whenever $n>3p/(p-2)$. Since
$$\supp\mbox{}_{{\rm VN}}\left(\mathcal F^{-1}M_{\cl F(Q)}\cl F\right) = \supp\mbox{}_{{\rm PM}}(Q) \subseteq S^{n-1}$$
(see the proof of Proposition \ref{noness}),
\cite[Lemma 4.8]{gralmul} implies now that
$$\supp\left(\mathcal F^{-1}M_{\cl F(Q)}\cl F M_b\right) \subseteq (S^{n-1})^*,$$
that is,
$T\in\mathfrak M_{\max}((S^{n-1})^*)$.
By the proof of Proposition \ref{noness},
there exists $u\in A(\mathbb R^n)$ vanishing on $S^{n-1}$,
such that $\langle Q,u\rangle\ne 0$. As $A(\mathbb R^n)$ has an approximate unit, $u\cdot Q\ne 0$.
On the other hand, if  $\xi$, $\eta\in  C_c^{\infty}(\mathbb R^n)$ then
\begin{eqnarray*}
\left\langle T,N(u)(\xi\otimes\eta)\right\rangle
& = &
\left\langle S_{N(u)}(\cl F^{-1}M_{\cl F(Q)}\cl FM_b),\xi\otimes\eta\right\rangle\\
& = &
\left(S_{N(u)}(\cl  F^{-1}M_{\cl F(Q)}\cl F)b\xi,\bar\eta\right) = \left\langle u\cdot Q,\bar\eta\ast \check{b\xi}\right\rangle,
\end{eqnarray*}
where we use that $S_{N(u)}(T) = u\cdot T$ whenever $T\in \vn(\mathbb R^n)$ and $u\in A(\mathbb R^n)$, and the fact that
$\langle  \cl F^{-1}M_{\cl F(Q)}\cl F,v\rangle = \langle Q,v\rangle$ for any $v\in C_c^{\infty}(\mathbb R^n)$.

Since the functions of the form
$\bar\eta\ast \check{b}\check{\xi}$, where $\xi,\eta\in L^2(\mathbb R^n)$ and $b\in C_c(\mathbb R^n)$, are dense in $A(\mathbb R^n)$,
we have that $\langle T,N(u)(\xi\otimes\eta)\rangle\ne 0$ for some $\xi$, $\eta$ and $b$.
Since $N(u)$ vanishes on $(S^{n-1})^*$, we conclude that $T\not\in\mathfrak M_{\min}((S^{n-1})^*)$.
\end{proof}

Now we can prove the failure of the generalised Fuglede-Putnam theorem for Schatten classes.

\begin{theorem}\label{th_schattenc}
Let $p > 2$ and $m > (24p-12)/(p-2)$.
There exist commutative families  $\{A_i\}_{i=1}^m$, $\{B_i\}_{i=1}^m$ of normal operators and an operator $T\in\cl C_p$ such that
\begin{equation}\label{eq_but}
\sum_{i=1}^m A_i T B_i=0 \ \mbox{ and } \ \sum_{i=1}^m A_i^* T B_i^*\ne 0.
\end{equation}
Moreover, a compact operator $T$ for which (\ref{eq_but}) holds exists if $m\ge 25$.
\end{theorem}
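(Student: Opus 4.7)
The plan is to combine Proposition \ref{noness} (the failure of reduced spectral synthesis for $S^{n-1}$ when $n\geq 4$) with the Schatten--class estimate of Lemma \ref{spoper}(ii). Fix $n$ with $n>3p/(p-2)$ in the Schatten--$p$ case, respectively $n=8$ in the compact case. Let $\mu$ be the normalised surface measure on $S^{n-1}\subseteq\mathbb R^n$, let $Q=\partial\mu/\partial x_1$, and let $b\in C_c^{\infty}(\mathbb R^n)$. I will take as candidate for $T$ the operator
$$T\;:=\;\cl F^{-1}M_{\cl F(Q)}\cl F M_b\;=\;\lambda(Q)M_b;$$
by Lemma \ref{spoper}(ii) we have $T\in\cl C_p$ (respectively $T\in\cl K$), and since $\supp_{\vn}(\lambda(Q))\subseteq S^{n-1}$ it follows from \cite[Lemma 4.8]{gralmul} that the masa--bimodule support of $T$ is contained in $(S^{n-1})^*$.

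Next I would construct $(A_i)_{i=1}^m$ and $(B_i)_{i=1}^m$ as bounded multiplication operators $A_i=M_{\alpha_i}$, $B_i=M_{\beta_i}$ on $L^2(\mathbb R^n)$, with smooth compactly supported symbols, so that the associated Schur--multiplier symbol
$w(x,y)=\sum_{i=1}^m\alpha_i(x)\beta_i(y)$
factors as $w(x,y)=\chi(x)\chi(y)\,q(x-y)$, where $\chi\in C_c^{\infty}(\mathbb R^n)$ equals $1$ on a neighbourhood of the closure of $\{x\in\mathbb R^n:\mathrm{dist}(x,\supp(b))\leq 1\}$, and $q$ is a complex--coefficient polynomial vanishing on $S^{n-1}$ together with $\partial_{z_1}q$ (so that $q\cdot Q=0$ as a distribution on $\mathbb R^n$ and hence $\sum A_iTB_i=S_w(T)=0$). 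To secure $\sum A_i^*TB_i^*\neq 0$, the polynomial $q$ is tuned so that a complex--conjugate counterpart detects the pairing obstruction supplied by Proposition \ref{noness}: concretely, using the auxiliary polynomial $u(z)=z_1(|z|^2-1)$ (an element of $I(S^{n-1})$ satisfying $\langle Q,u\rangle\neq 0$, as in the proof of Proposition \ref{noness}), one arranges that $S_{\bar w}(T)$ picks up a multiple of $\lambda(u\cdot Q)M_b\neq 0$. Counting the rank of the tensor decomposition of $(|x-y|^2-1)$--based symbols and of the complex correction, together with absorbing $\chi(x)\chi(y)$ into the $\alpha_i$ and $\beta_i$, yields $m\leq 3n+1$ after reduction, so the threshold $n>3p/(p-2)$ translates into $m>(24p-12)/(p-2)$, and $n=8$ produces $m\geq 25$.

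The main obstacle is producing the asymmetry between $\Delta(T)$ and $\tilde\Delta(T)$. On $\cl C_2$ the Fuglede--Putnam identity is automatic because $\Delta|_{\cl C_2}$ is normal with $(\Delta|_{\cl C_2})^*=\tilde\Delta|_{\cl C_2}$, so the failure on $\cl C_p$ or $\cl K$ must be detected by the genuinely distributional nature of the kernel of $T$; it is exactly here that the hypothesis $T\in\cl C_p\setminus\cl C_2$ (and hence the lower bound on $n$) is essential. For symbols purely of the form $q(x-y)$ with $q$ polynomial, the vanishing conditions $q=0$ and $\partial_{z_1}q=0$ on $S^{n-1}$ are invariant under $q\mapsto\bar q$, so a naive choice collapses $\Delta$ and $\tilde\Delta$; the construction therefore genuinely uses the cutoff $\chi(x)\chi(y)$ (breaking translation invariance) together with the specific polynomial $u$ from Proposition \ref{noness} (whose nonzero pairing with $Q$ is precisely the input provided by the failure of reduced spectral synthesis) to engineer the desired violation and to carry out the counting producing the stated bounds.
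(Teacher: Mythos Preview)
Your proposal correctly identifies the ingredients (the sphere $S^{n-1}$, the distribution $Q=\partial\mu/\partial x_1$, the Schatten estimate of Lemma \ref{spoper}) but does not close the crucial gap, which you yourself flag: the conditions ``$q=0$ and $\partial_{z_1}q=0$ on $S^{n-1}$'' are invariant under $q\mapsto\bar q$, so any polynomial $q$ with $q\cdot Q=0$ automatically satisfies $\bar q\cdot Q=0$. Your proposed escape via the real cutoff $\chi(x)\chi(y)$ does nothing: if $w(x,y)=\chi(x)\chi(y)q(x-y)$ then $\bar w(x,y)=\chi(x)\chi(y)\bar q(x-y)$, and $S_{\bar w}(T)=M_\chi\lambda(\bar q\cdot Q)M_{\chi b}=0$ just as before. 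Invoking the polynomial $u(z)=z_1(|z|^2-1)$ from Proposition \ref{noness} does not help either unless you explain how $u$ enters the symbol $w$; as stated, there is no mechanism by which $S_{\bar w}(T)$ picks up $\lambda(u\cdot Q)M_b$. Your rank count $m\le 3n+1$ is also inconsistent: the minimal nonzero polynomial in $\mathbb R^n$ with both $q$ and $\partial_{z_1}q$ vanishing on $S^{n-1}$ is $(|z|^2-1)^2$, whose tensor rank in $q(x-y)$ is quadratic in $n$, and in any case $m=3n+1$ with $n>3p/(p-2)$ gives $m>(10p-2)/(p-2)$, not the stated bound.

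The paper resolves the symmetry obstruction by a genuinely different construction: it passes to $\mathbb R^{2n}$, takes the degree-two \emph{complex} polynomial $q(z)=\sum_{i=1}^n z_i^2-1+i\big(\sum_{i=n+1}^{2n}z_i^2-1\big)$ vanishing on $S^{n-1}\times S^{n-1}$, and replaces $\partial_{x_1}$ by the first-order operator $L=(1+i)\hat x_{n+1}\partial_{x_1}-(1-i)\hat x_1\partial_{x_{n+1}}$, designed so that $L(q)=0$ but $L(\bar q)=4(1+i)\hat x_1\hat x_{n+1}\neq 0$. The candidate operator is $T=\cl F^{-1}M_{\cl F(L(\mu\times\mu))}\cl F M_c$; one then checks directly that $qL(\nu)=0$ while $\bar qL(\nu)=-4(1+i)\hat x_1\hat x_{n+1}\nu\neq 0$, giving $\Delta(T)=0$ and $\tilde\Delta(T)\neq 0$. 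Since $q$ has degree $2$ in $2n$ variables, $q(x-y)$ has tensor rank at most $6n+1$, and the Schatten condition becomes $n>3p/(p-2)$, yielding $m>(24p-12)/(p-2)$. The asymmetry thus lives in the interplay between the complex polynomial $q$ and the complex-coefficient operator $L$, not in any cutoff.
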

\begin{proof}
For the purposes of the proof, set $\mathcal D(\mathbb R^n) = C_c^\infty(\mathbb R^n)$
and let $\mathcal D'(\mathbb R^n)$ be its dual space, that is, the space of distributions.
As pointed out before Proposition \ref{noness},
$\cl D(\bb{R}^{2n}) \subseteq A({\mathbb R}^{2n})$ and every pseudo-measure on $\bb{R}^n$
can be viewed as an element of $\mathcal D'(\mathbb R^n)$.

Let $q(x_1,\ldots, x_{2n})=\sum_{i=1}^nx_i^2-1+i\left(\sum_{i=n+1}^{2n}x_i^2-1\right)$, $m =6n+1$
and $\alpha_i$, $\beta_i$, $i=1,\ldots,m$, be polynomials such that
$$q(x-y)=\sum_{i=1}^m \alpha_i(x)\beta_i(y), \quad x,y\in{\mathbb R}^{2n}.$$
Let $u$, $v\in \cl D(\bb{R}^{2n})$, $a_i=u\alpha_i$, $b_i=v\beta_i$,
$A_i=M_{a_i}$ and $B_i=M_{b_i}$, $i = 1,\dots,m$.

Let $n\geq 3$, $\mu$ be the normalised surface measure of $S^{n-1}$ and $\nu = \mu\times\mu$.
Let $\hat{x}_i : \bb{R}^n \to \bb{R}$ be the function given by $\hat{x}_i(x) = x_i$, $x\in \bb{R}^n$,
and set
$$L = (1+i) \hat{x}_{n+1}\frac{\partial}{\partial x_1}-(1-i)\hat{x}_1\frac{\partial}{\partial x_{n+1}},$$
considered as a linear transformation on $\cl D'(\mathbb R^{2n})$.
Set $\gamma = {\mathcal F}(\mu)$. Denoting a typical element of the dual of $\mathbb{R}^{2n}$ by $t = (s,r)$,
we have that ${\mathcal F}(\nu)(t) = \gamma(s)\gamma(r)$, and so
$${\mathcal F}(L(\nu))(s,r)
= - (1+i)s_1\gamma(s)\frac{\partial\gamma}{\partial r_1}(r) + (1-i)r_1\gamma(r)\frac{\partial\gamma}{\partial s_1}(s).$$
Assume that $n > 3p/(p-2)$ or, equivalently, that $p(n-3)/2 > n$.

By \cite[p.154]{stein_weiss},
$\gamma(s) = C \pi|s|^{-(n-2)/2}J_{(n-2)/2}(2\pi|s|)$, for some constant $C$.
Using \cite[Theorem 5.1 (pp. 139-140)]{Folland},
we obtain
$$\gamma(s)= O\left(\frac{1}{|s|^{\frac{n-1}{2}}}\right), \ s_1\gamma(s)= O\left(\frac{1}{|s|^{\frac{n-3}{2}}}\right) \mbox{ and }
\frac{\partial\gamma(s)}{\partial s_1}= O\left(\frac{1}{|s|^{\frac{n-1}{2}}} \right).$$
Thus the functions $\hat{s}_1\gamma$ and $\frac{\partial\gamma}{\partial s_1}$ belong to
$L^p(\mathbb{R}^n)$ and so
${\mathcal F}(L(\nu))\in L^p(\mathbb{R}^{2n})$.
By Lemma \ref{inter}, for any $c\in C_c^{\infty}(\bb{R}^{2n})$, we have that
$$T := {\mathcal F}^{-1}M_{\mathcal F (L(\nu))}{\mathcal F}M_{c} \in \cl C_p(L^2(\mathbb R^{2n})).$$

We will now show that one can choose $u$, $v$ and  $c$ in such a way that
(\ref{eq_but}) hold.
Let $\varphi\in \cl D(\bb{R}^{2n})$.
Since $q$ vanishes on $S^{n-1}\times S^{n-1}$, we have that
$$\langle \nu,q L(\varphi)\rangle = \langle \nu,\bar qL(\varphi)\rangle = 0.$$
On the other hand, $L(q) = 0$ and hence $\langle \nu,L(q)\varphi\rangle = 0$.
It follows that
$$\langle L(\nu),q\varphi\rangle=-\langle \nu,L(q\varphi)\rangle = - \langle \nu,q L(\varphi)\rangle - \langle \nu,L(q)\varphi\rangle = 0$$
and
\begin{eqnarray*}
\langle  L(\nu),\bar q\varphi\rangle
& =& -\langle \nu, L(\bar q)\varphi\rangle - \langle \nu,\bar qL(\varphi)\rangle
=  -\langle \nu, L(\bar q)\varphi\rangle\\
& = &
-\langle \nu, 4(1+i)\hat{x}_1 \hat{x}_{n+1} \varphi\rangle
= -4(1+i)\langle \hat{x}_1 \hat{x}_{n+1} \nu,\varphi\rangle.
\end{eqnarray*}
Thus,
\begin{equation}\label{eq_znz}
qL(\nu) = 0 \ \mbox{ and } \  \bar q L(\nu) = -4(1+i) \hat{x}_1 \hat{x}_{n+1} \nu.
\end{equation}
For $\nph,\psi\in \cl D(\bb{R}^{2n})$, we have
 \begin{eqnarray*}
 (T\varphi,\psi)&=&(\mathcal F(L(\nu))\mathcal F (c\varphi),\mathcal F(\psi))
 = \int \mathcal F(L(\nu))(x)\mathcal F(c\varphi)(x)\overline{\mathcal F\psi(x)}dx\\
& = & \int \mathcal F(L(\nu))(x)
 \mathcal F^{-1}(\bar\psi\ast\check{c}\check{\varphi})(x) dx = \langle L(\nu),\bar\psi\ast \check{c}\check{\varphi}\rangle.
 \end{eqnarray*}
Therefore, using (\ref{eq_znz}), for all $\nph,\psi\in \cl D(\bb{R}^{2n})$, we have
\begin{eqnarray*}
\left(\sum_{i=1}^m M_{a_i} T M_{b_i}\varphi,\psi\right)
& = &
\sum_{i=1}^m  \left(T (b_i\varphi),\overline{a_i}\psi\right)\\
& = &
\left(L(\nu), \sum_{i=1}^m  (a_i \bar{\psi}) \ast (\check{b}_i\check{c}\check{\varphi}),\right)\\
& = &
\left\langle L(\nu), \sum_{i=1}^m (\alpha_i u \bar\psi) \ast (\check{\beta}_i \check{v} \check{c} \check{\varphi}) \right\rangle\\
& = &
\left\langle L(\nu),q\left((u\bar\psi) \ast (\check{v} \check{c} \check{\varphi})\right) \right\rangle
= 0.
\end{eqnarray*}
On the other hand, by (\ref{eq_znz}) and the fact that
the span of the set $\{u \psi \ast (\check{v} \check{c} \check{\varphi}): \psi,\varphi, u,v,c\in \cl D(\bb{R}^n)\}$ is dense in $A(\mathbb R^n)$,
there exist $\nph$, $\psi$, $u$, $v$ and $c$ in $\cl D(\bb{R}^n)$ such that
\begin{eqnarray*}
\left(\sum_{i=1}^m M_{a_i}^* T M_{b_i}^*\varphi,\psi\right)
& =&
\left\langle L(\nu),\bar q(\overline{u\psi}\ast \check{\bar v}\check{c}\check{\varphi})\right\rangle\\
& = & -4(1+i)\left\langle \hat{x}_1 \hat{x}_{n+1}\nu, \overline{u \psi} \ast (\check{\bar{v}} \check{c} \check{\varphi}) \right\rangle \ne 0.
\end{eqnarray*}

In our construction $m = 6n+1$ therefore in terms of $m$ we have the condition $[m/6] > 3p/(p-2)$.
For this it suffices that $m/6 > 1+3p/(p-2)$, that is, $m > (24p-12)/(p-2)$.
Now taking $p\to \infty$ we get that the equations  $\sum_{i=1}^m A_i T B_i=0$ and  $\sum_{i=1}^m A_i^* T B_i^*=0$
are non-equivalent on $\cl K$ for $m > 24$.
 \end{proof}

\end{document}